\title{Reflected and Doubly RBSDEs with Irregular Obstacles and a Large Set of Stopping Strategies}
\author{ Ihsan Arharas\thanks{Cadi Ayyad University, Department of Mathematics, Faculty of Sciences Semlalia,
B.P. 2390, Marrakesh, Morocco.
E-mail: ihsan.arharas@edu.uca.ac.ma}
      \and 
        Youssef Ouknine              
                      \thanks{Mohammed VI Polytechnic University, Africa Business School, Lot 660, Hay Moulay Rachid, Ben Guerir 43150, Morocco, and 
Cadi Ayyad University,
Department of mathematics, Faculty of Sciences Semlalia,  
B.P. 2390, Marrakesh, Morocco.
E-mail: youssef.ouknine@um6p.ma, ouknine@uca.ac.ma}
        }
\numberwithin{equation}{section} %number the equations by sections
\newcommand{\vertiii}[1]{{\left\vert\kern-0.25ex\left\vert\kern-0.25ex\left\vert #1 
    \right\vert\kern-0.25ex\right\vert\kern-0.25ex\right\vert}}
\DeclareMathOperator*{\esssup}{ess\,sup}
\newtheorem{theorem}{Theorem}[section]
\newtheorem{lemma}{Lemma}[section]
\newtheorem{proposition}{Proposition}[section]
\newtheorem{remark}{Remark}[section]
\newtheorem{example}{Example}
\newtheorem{definition}{Definition}[section]
\newenvironment{proof}{{\sc Proof.}}{}
\newenvironment{AMS}{}{}
\newenvironment{keywords}{}{}
\begin{document}
\maketitle
%%%%%%%%%%%%%%%%%%%%%%%%%%%
% abstract, keywords and Subject classification are optional.
%%%%%%%%%%%%%%%%%%%%%%%%%%%
\begin{abstract}
We introduce a new formulation of reflected BSDEs and doubly reflected BSDEs associated with irregular obstacles. In the first part of the paper, we consider an extension of the classical optimal stopping problem over a larger set of stopping systems than the set of stopping times (namely, the set of \textit{split stopping times}), where the payoff process $\xi$ is irregular and in the case of a general filtration. Split
stopping times are a powerful tool for modeling financial contracts and derivatives that depend on multiple conditions or triggers, and for incorporating stochastic processes with jumps and other
types of discontinuities. We show that the value family can be aggregated by an optional process $v$, which is characterized as the Snell envelope of the reward process $\xi$ over split stopping times. Using this, we prove the existence and uniqueness of a solution $Y$ to irregular reflected BSDEs. In the second part of the paper, motivated by the classical Dynkin game with completely irregular rewards considered by Grigorova et al. (2018), we generalize the previous equations to the case of two reflecting barrier processes.

\end{abstract}

\begin{keywords}
\small \textbf{Keywords:} reflected BSDEs ; doubly reflected BSDEs ; split stopping times ; optimal stopping   
\end{keywords}

\begin{AMS}
\small \textbf{AMS MSC:} \href{https://mathscinet.ams.org/msc/msc2010.html?s=60H20}{60H20}, \href{https://mathscinet.ams.org/msc/msc2010.html?s=60H30}{60H30}, \href{https://mathscinet.ams.org/msc/msc2010.html?s=65C30}{65C30}.  
\end{AMS}
%Relations with probability theory and stochastic processes37A50; 60G07; 
%general theory of processes
%Stochastic integral equations
%Applications of stochastic analysis (to PDE, etc.)
%%%%%%%%%%%%%%%%%%%%%%
% % Here is the start of the Text
%“”
%%%%%%%%%%%%%%%%%%%%%%
%\newpage

\section{Introduction}
 
\hspace*{0.5 cm} The notion of Backward stochastic differential equations (BSDEs, in short) was originally introduced
by Bismut (\cite{Bismut}, 1973) in the case of a linear driver and developed afterward by Pardoux and Peng in \cite{Pardoux and Peng}. Ever since, these equations has been widely studied in the literature due to their affiliation with many problems in different mathematical areas, we mention, among others, partial differential equations, theoretical economies, mathematical finance, stochastic optimal control, game theory, and other optimality problems. Later, El Karoui et al. \cite{El Karoui and al.} introduced the notion of backward stochastic differential equation with reflection in the case of a Brownian filtration and a continuous obstacle. This notion is simply a backward equation with the property that the first component of its solution remain greater than or equal to a given process called obstacle or barrier. The theory of reflected BSDEs for right-continuous and right-uppersemicontinuous obstacles, has been developed in a very high generality: see e.g., \cite{Hamadene, Crepey-Matoussi, Hamadene-Ouknine2003, Essaky(2008), Hamadene-Ouknine, Quenez-Sulem (2014),  Optimal stopping with, baadi2, baadi1}, however, the case of completely irregular obstacles presents considerable additional challenges. The first who dealt with this case are  Grigorova et al. in \cite{Reflected BSDEs when the obstacle is not right-continuous and optimal stopping}. Using some techniques from the optimal stopping theory (cf. \cite{El Karoui}, \cite{KobylanskiQuenez}, \cite{Maingueneau}) and some results from the  general theory of processes (cf. \cite{Probabilites et Potentiel2}), Grigorova et al. have proved existence and uniqueness of the solution to reflected BSDEs in a larger stochastic basis than the Brownian one, and where the obstacle is completely irregular. An important observation of their paper is that the solution is the value function of an optimal stopping problem in which the risk of a financial position $\xi$ is assessed by an $f$-conditional expectation $\varepsilon^f(.)$ (where $f$ is a Lipschitz driver). In this work the obstacle was considered to be an optional process. On the other hand, when the obstacle is given by a completely irregular predictable process $\xi$, and the filtration is non quasi-left continuous, Bouhadou and Ouknine \cite{Bouhadou} proved the existence of a unique predictable solution to reflected BSDEs associated with $\xi$, by means of the predictable Mertens decomposition (see \cite{Meyer_Un cours sur les integrales stochastiques}). In their setting, the predictable projection of a martingale appears in the expression of the first component of the solution, which differs from the case of reflected BSDEs associated with optional obstacles. 

Cvitanić and Karaztas (\cite{Cvitanic and Karatzas}, 1996) generalized the result of the work \cite{El Karoui and al.} to the case of two reflecting obstacles: the solution of the BSDE has to remain between two continuous processes $\xi$ and $\zeta$, called lower and upper obstacle, respectively. Since then, there has been a considerable development of doubly reflected BSDEs (DRBSDEs in short). The case of a not necessarily continuous obstacles were developed, among others, by Hamadène et al. \cite{Hamadene-Hassani-Ouknine}, Crépey and Matoussi \cite{Crepey-Matoussi}, Essaky et al. \cite{Essaky-Harraj-Ouknine}, Dumitrescu et al. \cite{Generalized Dynkin Games and Doubly reflected BSDEs with jumps}, and usually introduced in connection with various applications such as Dynkin games (see e.g., \cite{DG}). In \cite{Doubly Reflected BSDEs and E-Dynkin games}, Grigorova et al. (2020) studied DRBSDEs in the case of completely irregular optional obstacles, and interpreted the solution in terms of a game problem. They show the existence of a unique solution if and only if the so-called \textit{Mokobodzki's condition} holds (there exist two strong supermartingales such that their difference is between $\xi$ and $\zeta$). Recently,  Arharas et al. (2021) \cite{ArharasBouhadouOuknine} generalized the work  \cite{Bouhadou}, using tools from the general theory of stochastic processes, for instance, Gal'chouk and Lenglart formula for strong predictable semimartingales and the predictable Mertens decomposition. They formulate a notion of DRBSDEs in the predictable setting, where the obstacles are predictable processes and the filtration is non-quasi-left continuous. 

Although it is a known fact that every predictable process can be viewed as an optional process, references such as \cite{ArharasBouhadouOuknine}, \cite{Bouhadou}, \cite{Doubly Reflected BSDEs and E-Dynkin games} and \cite{Reflected BSDEs when the obstacle is not right-continuous and optimal stopping} have shown that reflected BSDEs in the optional and predictable frameworks exhibit noticeable differences. The main objective of this paper is to establish a formulation of reflected BSDEs that connects the two frameworks.
% to explore a compelling theoretical question: how can we formulate reflected and doubly reflected BSDEs in a way that allows for parallel interpretation of both the predictable and optional cases, and whether this new notion of BSDEs admits a unique solution?
% A theoretically very interesting question is: what can be the formulation of reflected and doubly reflected BSDEs if we wish to interpret in parallel the predictable and the optional case, and does this new BSDE's notion admits a unique solution? The main motivation of our work is to seek an answer to this question. 
The use of optimal stopping theory and the general theory of stochastic processes is generally known in most analyses of reflected BSDEs associated with irregular obstacles (see e.g., \cite{Reflected BSDEs when the obstacle is not right-continuous and optimal stopping, Optimal stopping with}), therefore, we tackle the later question by considering an augmentation of the classical optimal stopping problem, that is, by optimizing over a larger set of stopping strategies than the set of stopping times, the so-called set of \textit{split stopping times}. More precisely, we are interested in the following optimal stopping problem:
We are given a financial position/payoff process $\xi$. For a split stopping time $\delta=(G,\sigma)\in \mathcal{S}_0$, we define
\begin{equation} \label{vintro}
v(\delta) := \esssup_{\rho=(H,\tau) \in \mathcal{S}_{\delta}} E[ \xi_\rho \vert \mathcal{F}_\delta],
\end{equation}
where $\mathcal{S}_\delta$ denotes the set of split stopping times $\rho=(H, \tau)$, such that $\rho \geq \delta$, in the sense of; $\tau \geq \sigma$ a.s. and  $H \cap \lbrace \tau = \sigma \rbrace \subset G \cap \lbrace \tau = \sigma \rbrace$ (see Definition \ref{s.s.t}, Section \ref{s3}).  Note that with every stopping time $\tau$ can be associated the split stopping time $(\emptyset, \tau)$  (see e.g., \cite{Probabilites et Potentiel2}), thence the set of split stopping times is bigger than the one of stopping times.  

The notion of \textit{Split stopping time} can be traced back to Bismut \cite{Bismut1} who names it a “\textit{quasi-stopping time}” and resumed later in a slightly different form by Dellacherie and Meyer in \cite[Appendix 1, p. 424]{Probabilites et Potentiel2}. A generalization can be found in Maingueneau \cite{Maingueneau}. Split stopping times are a useful generalization of stopping times in probability theory and stochastic processes, particularly in the context of finance and financial modeling. One reason why split stopping times are used is that they provide a way to model financial contracts and derivatives that depend on multiple conditions or triggers. In many financial applications, the value of a contract or derivative may depend not only on the value of the underlying asset at a specific time (represented by a stopping time), but also on some other condition or trigger that must be met in order for the contract to be exercised or for the derivative to be valued. 
Split stopping times allow for the modeling of such conditions or triggers by incorporating an additional element $H$, which represents the condition or event that triggers the stopping time $\tau$. This can provide a more flexible and realistic representation of financial contracts and derivatives than is possible with a simple stopping time. Another reason why split stopping times are useful is that they can be used to model stochastic processes with jumps or discontinuities, which can occur in financial markets due to sudden changes in market conditions or events such as news announcements. Split stopping times allow for the modeling of such discontinuities by incorporating both the left and right limits of the process $\xi$ at time $\tau$, which provides a way to account for sudden changes in the value of the underlying asset or instrument being modeled. Overall, split stopping times are a powerful tool for modeling complex financial instruments and contracts, and for incorporating stochastic processes with jumps and other types of discontinuities. For instance, Bismut in \cite{Bismut1} introduced problem \eqref{vintro} in the case of a cadlag financial position process  $\xi$ and applied the paper results to a game problem over \textit{quasi-stopping times}. In the case of a non-right-continuous process $\xi$, this optimal stopping problem has been studied in \cite[p. 136]{El Karoui}, where the assumption of right-continuity of $\xi$ was replaced by the weaker assumption of right-uppersemicontinuity (r.u.s.c.). Recently, Marzougue \cite{Marzougue} considered a formulation of problem \eqref{vintro} closest to ours, where the reward is a ladlag positive process. Using the definition of split stopping times given in \cite{Maingueneau}, he studied the following problem: For each stopping time $\theta$,
\begin{align*}
Y(\theta) := \esssup_{\rho=(H^-,H,H^+,\tau) \in \mathcal{G}_{\theta}} E[ \xi_\rho \vert \mathcal{F}_\theta], 
\end{align*}
where $\mathcal{G}_{\theta}$ is the set of split stopping times $ \rho=(H^-,H,H^+,\tau)$, such that $\tau \geq \theta$. Nevertheless, our model is more general in two aspects: (i) the value family is indexed by split stopping times; (ii) the supremum in \eqref{vintro} is taken over the set of split stopping times greater than or equal to $\delta$, in the sense of the order defined above. We think such a generalization is interesting from theoretical point of view. From applied point of view, in many cases, the value of the process at a stopping time depends on the information available up to that time. For example, in the context of a stock market, the value of a stock at the time of sale depends on the information available to the investor up to the time of the sale. In order to find the optimal time to sell the stock, one needs to take this information into account. By using split stopping times, we are able to explicitly specify the information available up to the stopping time as a subset $H$ of the information available up to the stopping time, which is represented by the $\sigma$-algebra $\mathcal{F}_{\tau^-}$. This allows us explicitly incorporate the available information when computing the value function for an optimal stopping problem, and can help us find the optimal split stopping time and decision rule. Without the use of split stopping times, it may be difficult to specify and incorporate the available information in a clear and consistent manner.

In the present paper, we consider problem \eqref{vintro} in the case of a general filtration and without making any regularity assumptions on $\xi$. For simplification purposes, we suppose that the process $\xi$ is left-limited. We highlight that the results of this paper remain valid in the case where $\xi$ do not have left limits. This can be done by considering the process $(\bar{\xi}_t)_{t \in [0,T]}:= (\limsup\limits_{s \uparrow t, s<t} \xi_s)_{t \in [0,T]}$ instead of the process $(\xi_{t^-})_{t\in [0,T]}$ (see e.g., \cite{Doubly Reflected BSDEs and E-Dynkin games}).
Our first goal is to provide some properties of the value function family $v$ when $\xi$ is a ladlag positive process. To this purpose, some new notions for families of random variables indexed by split stopping times are introduced. Using some techniques and approaches from
the optimal stopping theory (cf. \cite{Neveu}, \cite{KobylanskiQuenez}, \cite{Maingueneau}) and some results from the  general theory of processes (cf. \cite{Probabilites et Potentiel2}), we show that the value family $(v(\delta), \delta \in\mathcal{S}_0)$ can be aggregated by a unique cadlag optional process  $(v_t)_{t\in [0,T]}$. We characterize the value process $(v_t)_{t\in [0,T]}$ as the Snell envelope of $\xi$ over split stopping times, namely, the smallest strong supermartingale greater than or equal to $\xi$. Then, we give a Mertens decomposition and we obtain some local properties of the value function $(v_t)$. Inspired by the work \cite{KobylanskiQuenez}, we underline that our results can be generalized to the case where the reward is given by a family $(\xi(\delta), \delta \in\mathcal{S}_0)$ of nonnegative random variables indexed by split stopping times. As it is mentioned in \cite{KobylanskiQuenez}, this setup is more general than the setup of processes.

An interesting question now is: using the previous results to the case where, in addition to the reward process $\xi$, there is an additional running reward process $g\in\mathcal{S}^2$, can we show that the  smallest strong supermartingale $(v_t)$ of problem \eqref{vintro} is the solution of the reflected BSDE with lower obstacle $\xi$? That is to say, can we find a new formulation of Skorohod reflecting conditions that characterizes this solution? As a second contribution of this paper, we give a generalized formulation of Skorohod reflecting conditions (see \eqref{sko1'}, \eqref{sko2'}) and then characterize the above value process as the unique solution of the following reflected BSDE: 
\begin{align} \label{RBSDEintro}
Y_\delta=\xi_{\rho^T} + \int_\delta^{\rho^T}  g(s,Y_s,Z_s) ds &- \int_\delta^{\rho^T} Z_s dW_s - (M_{\rho^T}-M_{\delta}) +  A_{\rho^T}-A_\delta \nonumber \\
&+ B_{{\rho^T}^-}-B_{\delta^-} \,\,\, \text{a.s.}, \quad \text{for all $\delta \in \mathcal{S}_0$,} 
\end{align} 
where $T$ is a fixed finite time horizon, and $\rho^T$ is the terminal split stopping time $(\emptyset, T)$. The important point to note here is that the formulation \eqref{RBSDEintro} generalized the predictable and the optional case. If we consider $\delta=(\emptyset, \tau)$, where $\tau$ runs through the
set of stopping times $\mathcal{T}_0$, we recover the usual formulation of reflected BSDEs associated with irregular optional obstacles given in \cite{Optimal stopping with}. Also, recall that, if $\tau$ is a predictable stopping time, we can associate with it the split stopping time $(\Omega, \tau)$, which can be denote by $\tau^-$ (see \cite[Appendix 1, p. 424]{Probabilites et Potentiel2}). Therefore, by associating $T$ with the split terminal time $\rho^T = (\Omega, T )$ and taking $\delta=(\Omega, \tau)$, where $\tau$ runs through the set of predictable stopping times $\mathcal{T}^p_0$, equation \eqref{RBSDEintro} coincides with the formulation of reflected BSDEs for predictable barriers given in \cite{Bouhadou}.

As a third contribution of this paper, we generalize the previous equations with the objective to give a new characterization of the problem of doubly RBSDEs in the case where the barriers do not satisfy any regularity assumption, and where the filtration is general. Motivated by ideas of \cite{Doubly Reflected BSDEs and E-Dynkin games}, under an extended type of \textit{Mokobodzki's condition}, we show the existence and uniqueness of the solution. In the proof of our result, we will use a Picard iteration method to show the existence of a solution when the driver $g$ does not depend on the solution, and for a general driver we prove the existence and uniqueness using a contaction method. Grigorova et al. \cite{Doubly Reflected BSDEs and E-Dynkin games} were the first to deal with BSDEs with two reflecting barriers that are not right-continuous. At this point we would like to mention the use of split stopping times in their context. For completely irregular payoffs, Grigorova et al. observe that the solution is characterized in terms of the value of a generalized $\mathcal{E}^f$-Dynkin game, namely, a game problem over split stopping times with (non-linear) $f$-expectation, where $f$ is the driver of the doubly reflected BSDE. Precisely, let $\theta$ be a stopping time in $\mathcal{T}_0$. For each split stopping times $\rho=(H,\tau)$ s.t. $\tau \geq \theta$ a.s. and  $\delta=(G,\sigma)$ s.t. $\sigma \geq \theta$ a.s., the criterium is given by
\begin{align*}
\mathcal{J}_{\rho,\delta}:=  \mathcal{E}_{\theta, \tau \wedge \sigma}\Big[\xi_\rho \mathbf{1}_{\tau \leq \sigma} + \zeta_\delta  \mathbf{1}_{\sigma < \tau} \Big].
\end{align*}
In view of this, we believe that our new formulation of doubly RBSDEs can open a way towards further developments in this Dynkin game problem.

The rest of the paper is organized as follows: Section \ref{s2} is devoted to notation, definitions and some preliminary results. In section \ref{s3}, we present the notion of \textit{split stopping time}, which plays a crucial role in this paper. Next, we state our first main problem of the reflected BSDE, in Definition \ref{def2}, for one left limited obstacle. Subsection \ref{sect3.1} is devoted to study our optimal stopping problem \eqref{vintro}. We first provide some properties of the value function $v$ and the strict value function $v^+$ when $\xi$ is a ladlag process. Therefore, we show that the value family $(v(\delta), \delta \in\mathcal{S}_0)$ can be aggregated by a unique ladlag optional process  $(v_t)_{t\in [0,T]}$.  Then, we give some general resuls such as Mertens decomposition of the value process and Skorokhod conditions satisfied by the associated non decreasing processes. In subsection \ref{sub 3.2}, we characterize the value process
of problem \eqref{vintro} in terms of the solution of a Reflected BSDE associated with a completely irregular obstacle in general filtration, and with a driver $g$ which does not
depend on the solution. Using a fixed point argument, we prove existence and uniqueness of the solution for a general Lipschitz driver. Section \ref{s4} is devoted to provide results on doubly reflected BSDEs associated with a Lipschitz driver and left limited barriers $(\xi,\zeta)$; in particular, we show existence and uniqueness of the solution of this equation.

\section{Notations and definitions} \label{s2}

\hspace*{0.5cm} Let $T > 0$ be a fixed positive real number. Throughout this paper we are given a probability space $(\Omega,\mathcal{F},P)$ equipped with a right-continuous complete filtration $\mathbb{F}=\lbrace\mathcal{F}_t:t\in[0,T]\rbrace$. Let $W$ be a one-dimensional $\mathbb{F}$-Brownian motion. We denote by $\mathcal{P}$ (resp. $\mathcal{O}$) the predictable (resp. optional) $\sigma$-algebra on $\Omega \times [0,T]$. Moreover, we denote by $\mathcal{T}_0$ the set of all stopping times $\tau$ with values in $[0,T]$, and by $\mathcal{T}^p_0$ the set of all predictable stopping times $\tau$ with values in $[0,T]$. More generally, for a given stopping time $S$ in $\mathcal{T}_0$, we denote by $\mathcal{T}_S$ (resp. $\mathcal{T}_{S^+}$) the set of stopping times $\tau$ in $\mathcal{T}_0 $ such that $S \leq \tau$ a.s. (resp. $S<\tau$ a.s. on $\lbrace S<T \rbrace$ and $\tau =T$ a.s. on $\lbrace S=T \rbrace$).\\

 	We also use the following notations:
\begin{enumerate}
\item[•] $L^2(\mathcal{F}_{T})$ is the set of random variables $\xi$ which are $\mathcal{F}_{T}$-measurable and square-integrable. 
\item[•] $\mathbf{H}^2$ is the set of real-valued predictable processes $\phi$ with $$\Vert \phi \Vert^2_{\mathbf{H}^2} := E \left[ \int_0^T \vert \phi_t \vert^2 dt \right] <\infty.$$ 
\item[•] $\mathbf{S}^{2}$ is the vector space of real-valued optional (not necessarily cadlag) processes $\xi$ such that 
$$ \vertiii{\xi}^2_{\mathbf{S}^{2}} := E[ \esssup_{\tau \in \mathcal{T}_0} \vert\xi_{\tau} \vert^2 ]<\infty.$$ 
 The mapping $\vertiii{.}_{\mathbf{S}^{2}}$ is a norm on the space $\mathbf{S}^{2}$. Moreover, the space $\mathbf{S}^{2}$ endowed with this norm is a Banach space. This follows by using similar arguments as in the proof of Proposition 2.1 in ~\cite{Reflected BSDEs when the obstacle is not right-continuous and optimal stopping}.

\item[•] $\mathbf{M}^2$ is the set of square integrable martingales $M=(M_t)_{t\in [0,T]}$ with $M_0=0$. \\
We can endow $\mathbf{M}^2$ with the norm $$ \Vert M \Vert_{\mathbf{M}^2} := E \lbrace M_T^2 \rbrace^{\frac{1}{2}}.$$
This space equipped with the scalar product $$(M,N)_{\mathbf{M}^2}=E [ M_T N_T ] =E [<M,N>_T ]= E[ [M,N]_T ], \,\,\,\,  M,N \in \mathbf{M}^2,$$
 is an Hilbert space. 
\item[•] $\mathbf{M}^{2,\bot}$ is the subspace of martingales $N \in \mathbf{M}^2$ satisfying $<N,W>_. =0$.
\end{enumerate}
\begin{remark}
The condition $<N,W>_. =0$ expresses the orthogonality of $N$ (in the sense of the scalar product $(.,.)_{\mathbf{M}^2}$) with respect to all stochastic integrals of the form $\int_0^. h_s dW_s$, where $h \in \mathbf{H}^2$ (cf. e.g., ~\cite[IV. 3, Lemma 2, p. 180]{Protter}). 
\end{remark} 
\hspace*{0.5cm} Let $\beta>0$. For $\phi \in \mathbf{H}^{2}$, we define $\Vert \phi \Vert_{\beta}^2 := E[\int_0^T e^{\beta s} \xi_s^2 ds]$. For $\xi \in \mathbf{S}^{2}$, we define $ \vertiii{\xi}^2_{\beta} := E[ \esssup_{\tau \in \mathcal{T}_0} e^{\tau \beta} \vert\xi_{\tau} \vert^2 ]$. For $M \in \mathbf{M}^2$, $\Vert M \Vert^2_{\mathbf{M}^2_\beta} := E(\int_0^T e^{s\beta} d[M]_s)$. 
Note that $\Vert \phi \Vert_{\beta}^2$ (resp. $\vertiii{\xi}^2_{\beta}$, $\Vert M \Vert^2_{\beta, \mathbf{M}^2}$) is a norm on $\mathbf{H}^{2}$ (resp. $\mathbf{S}^{2}$, $\mathbf{M}^2$) equivalent to the norm  $\Vert \xi \Vert^2_{\mathbf{H}^2}$ (resp. $\vertiii{\xi}^2_{\mathbf{S}^{2}}$,  $\Vert M \Vert_{\mathbf{M}^2}^2$). We write $K_\beta^2$ for the space $\mathbf{S}^{2} \times  \mathbf{H}^{2}$ equipped with the norm: $\Vert (Y,Z)\Vert^2_{K_\beta^2}:= \vertiii{Y}^2_{\beta} + \Vert Z \Vert_{\beta}^2 $, for $(Y,Z) \in \mathbf{S}^{2} \times \mathbf{H}^{2}$. Since $(\mathbf{S}^{2,p},\vertiii{.}^2_\beta )$ and $(\mathbf{H}^2, \Vert .\Vert_\beta^2 )$ are Banach spaces, $\mathbf{K}^2_\beta$ is also a Banach space. \\

For a ladlag process $\phi$, we denote by $\phi_{t+}$ and $\phi_{t-}$ the right-hand and left-hand limit of $\phi$ at time $t$. We denote by $\Delta \phi_t := \phi_t - \phi_{t-}$ the size of left jump of $\phi$ at time $t$ (with the convention $\phi_{0^-}=\phi_0$), and by $\Delta_+ \phi_t := \phi_{t+} - \phi_t$ the size of right jump of $\phi$ at time $t$.\\

The following orthogonal decomposition property of martingales in $\mathbf{M}^2$ can be found in  ~\cite[Chapter III, Lemma 4.24, p. 185]{Jacod and Shiryaev}:
\begin{lemma} \label{orthogonal decomposition}
For each $M\in \mathbf{M}^2$, there exists a unique couple $(Z,N)\in \mathbf{H}^2 \times \mathbf{M}^{2,\bot} $ such that
\begin{equation}
M_t= \int_0^t Z_s dW_s + N_t, \,\,\,\,  0\leq t\leq T \,\,\,\, a.s.
\end{equation}
\end{lemma}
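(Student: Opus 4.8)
The plan is to exploit the Hilbert space structure of $\mathbf{M}^2$ recalled above and reduce the statement to the orthogonal projection theorem. First I would introduce the linear map $\Phi:\mathbf{H}^2 \to \mathbf{M}^2$ given by $\Phi(Z):=\int_0^\cdot Z_s\, dW_s$. By the It\^o isometry, $\Vert \Phi(Z)\Vert_{\mathbf{M}^2}^2 = E[\int_0^T Z_s^2\, ds] = \Vert Z\Vert_{\mathbf{H}^2}^2$, so $\Phi$ is a linear isometry from the Hilbert space $\mathbf{H}^2$ into $\mathbf{M}^2$. Consequently its range $\mathcal{I}:=\Phi(\mathbf{H}^2)=\lbrace \int_0^\cdot Z_s\, dW_s : Z\in\mathbf{H}^2\rbrace$ is complete, hence a closed linear subspace of the Hilbert space $\mathbf{M}^2$.

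Next I would apply the orthogonal projection theorem: since $\mathcal{I}$ is closed, every $M\in\mathbf{M}^2$ admits a unique decomposition $M=L+N$ with $L\in\mathcal{I}$ and $N\in\mathcal{I}^{\bot}$, where the orthogonal complement is taken with respect to the scalar product $(\cdot,\cdot)_{\mathbf{M}^2}$. Because $\Phi$ is injective (being an isometry), the element $L$ determines a unique $Z\in\mathbf{H}^2$ with $L=\int_0^\cdot Z_s\, dW_s$, which yields the desired representation $M_t=\int_0^t Z_s\, dW_s + N_t$.

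It then remains to identify $\mathcal{I}^{\bot}$ with $\mathbf{M}^{2,\bot}$. For $N\in\mathbf{M}^2$ and $Z\in\mathbf{H}^2$ one computes, from the definition of the scalar product and of the bracket, $(N,\Phi(Z))_{\mathbf{M}^2}=E[<N,\int_0^\cdot Z\, dW>_T]=E[\int_0^T Z_s\, d<N,W>_s]$. By the Remark preceding the lemma, the vanishing of this quantity for every $Z\in\mathbf{H}^2$ is precisely equivalent to $<N,W>_\cdot=0$, that is, to $N\in\mathbf{M}^{2,\bot}$. Hence $N\in\mathcal{I}^{\bot}$ if and only if $N\in\mathbf{M}^{2,\bot}$, and the decomposition obtained above is exactly the asserted one. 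For uniqueness I would take two such representations with data $(Z,N)$ and $(Z',N')$; subtracting gives $\int_0^\cdot (Z_s-Z'_s)\, dW_s = N'_\cdot-N_\cdot$, an identity between an element of $\mathcal{I}$ and an element of $\mathbf{M}^{2,\bot}=\mathcal{I}^{\bot}$. A vector lying in both a subspace and its orthogonal complement is zero, so $N=N'$, and the injectivity of $\Phi$ forces $Z=Z'$.

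The step I expect to require the most care is the identification of $\mathcal{I}^{\bot}$ with $\mathbf{M}^{2,\bot}$: it rests on interpreting the $\mathbf{M}^2$-scalar product through the predictable bracket and on the fact that the finite-variation process $<N,W>$ induces a signed measure annihilated by all $Z\in\mathbf{H}^2$ only when it is identically zero. This is exactly the content of the Remark invoked above, so in practice the analytic difficulty is already packaged there and the argument proper is a clean application of Hilbert space geometry together with the It\^o isometry.
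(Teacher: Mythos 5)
Your proof is correct. Note that the paper does not actually prove this lemma: it is quoted verbatim from Jacod--Shiryaev (Chapter III, Lemma 4.24), so there is no in-paper argument to compare against. What you have written is essentially the standard proof of that cited result, i.e.\ the Galtchouk--Kunita--Watanabe decomposition specialized to the Brownian integrand space: the It\^o isometry makes $\mathcal{I}=\Phi(\mathbf{H}^2)$ a closed subspace of the Hilbert space $\mathbf{M}^2$, the projection theorem gives existence and uniqueness of $M=L+N$ with $N\in\mathcal{I}^{\bot}$, and the identification $\mathcal{I}^{\bot}=\mathbf{M}^{2,\bot}$ is exactly the content of the Remark (and of Protter's Lemma cited there). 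The one step worth making explicit, since it is where weak orthogonality $(N,\Phi(Z))_{\mathbf{M}^2}=0$ is upgraded to the strong orthogonality $\langle N,W\rangle\equiv 0$, is that the quantifier ``for every $Z\in\mathbf{H}^2$'' is essential: one tests against $Z$ equal to (a bounded version of) the sign of the density of $d\langle N,W\rangle$ with respect to its total variation, which lies in $\mathbf{H}^2$ because it is bounded and because the Kunita--Watanabe inequality gives $E\bigl[\int_0^T \vert d\langle N,W\rangle_s\vert\bigr]<\infty$. You correctly flag this as the delicate point and correctly observe that it is packaged in the Remark, so the argument is complete; it simply supplies a self-contained proof where the paper relies on a reference.
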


\begin{definition}(Strong supermartingale)
An optional process $Y=(Y)_{t\in [0,T]}$ is said to be a strong supermartingale if 
\begin{enumerate}
\item For every bounded stopping time $\tau$, $Y_\tau$ is integrable.
\item For every pair of stopping times $S$, $\tau$ such that $S \leq \tau$,
 \begin{equation}
 Y_{S} \geq E[Y_\tau \vert \mathcal{F}_{S}] \,\,\,\, a.s.
\end{equation}   
\end{enumerate}
\end{definition}

 The following Theorems can be found in ~\cite[Theorem 86, p. 220; Theorem 53, p. 187]{Probabilites et Potentiel1}.
 
\begin{theorem}(Section theorem) \label{section}
Let $X=(X_t)$ and $Y=(Y_t)$ be two optional (resp. predictable) processes. If for every finite
stopping time $\tau$ one has, $X_\tau = Y_\tau $ a.s., then the processes $(X_t)$ and $(Y_t)$ are indistinguishable.
\end{theorem}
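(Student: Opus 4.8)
The plan is to reduce the statement to an application of the measure-theoretic (optional, resp. predictable) section theorem, which I would take as the fundamental black box. First I would form the difference and consider the random set
$$A := \{(\omega,t) \in \Omega \times [0,T] : X_t(\omega) \neq Y_t(\omega)\}.$$
Since $X$ and $Y$ are optional (resp. predictable), the process $X - Y$ is $\mathcal{O}$-measurable (resp. $\mathcal{P}$-measurable), and hence $A = (X-Y)^{-1}(\mathbb{R} \setminus \{0\})$ is an optional (resp. predictable) set. The desired conclusion, that $X$ and $Y$ are indistinguishable, is by definition equivalent to the evanescence of $A$, i.e. to $P(\pi(A)) = 0$, where $\pi(A) := \{\omega : \exists\, t,\ (\omega,t) \in A\}$ is the projection of $A$ onto $\Omega$; note that $\pi(A)$ is $\mathcal{F}$-measurable by the measurable projection theorem, using the completeness of $\mathbb{F}$.

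Next I would argue by contradiction, assuming $P(\pi(A)) = a > 0$. The heart of the argument is the section theorem in its sectioning form: for every $\epsilon > 0$ there is a stopping time (resp. predictable stopping time) $\tau$ whose graph $[\![\tau]\!]$ is contained in $A$ and which satisfies $P(\{\tau < \infty\}) > P(\pi(A)) - \epsilon$. Applying this with $\epsilon = a/2$ yields such a $\tau$ with $P(\{\tau < \infty\}) > a/2 > 0$, and the inclusion $[\![\tau]\!] \subset A$ means precisely that $X_\tau(\omega) \neq Y_\tau(\omega)$ for every $\omega \in \{\tau < \infty\} = \{\tau \leq T\}$.

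To exploit the hypothesis, which is stated for finite stopping times, I would then truncate by setting $\sigma := \tau \wedge T$, a finite stopping time valued in $[0,T]$ (and predictable in the predictable case, since deterministic times are predictable and the minimum of predictable times is predictable). On the event $\{\tau \leq T\}$ one has $\sigma = \tau$, hence $X_\sigma \neq Y_\sigma$ there, so that $P(\{X_\sigma \neq Y_\sigma\}) \geq P(\{\tau \leq T\}) > a/2 > 0$. This contradicts the standing hypothesis $X_\sigma = Y_\sigma$ a.s. I therefore conclude $P(\pi(A)) = 0$, i.e. $A$ is evanescent, which is exactly the indistinguishability of $X$ and $Y$; the predictable case is handled verbatim, replacing the optional section theorem by its predictable counterpart.

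I would expect the genuine difficulty to reside entirely in the section theorem itself, whose proof rests on the capacitability theorem and the theory of analytic sets; once that deep result is granted, the deduction above is purely formal. The only points requiring minor care are the measurability of $\pi(A)$ (guaranteed by completeness of the filtration) and the truncation step $\sigma = \tau \wedge T$, which is needed precisely so that the sectioning time, possibly taking the value $+\infty$, is replaced by an admissible finite stopping time to which the hypothesis applies.
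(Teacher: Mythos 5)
Your proof is correct: reducing indistinguishability to the evanescence of the optional (resp.\ predictable) set $A=\{X\neq Y\}$ and invoking the sectioning form of the section theorem, with the truncation $\sigma=\tau\wedge T$ to land on a finite stopping time, is exactly the classical argument. The paper itself gives no proof of this statement --- it is quoted from Dellacherie--Meyer (\emph{Probabilit\'es et Potentiel}, Chap.~IV) --- and your deduction is precisely the one found in that reference, so there is nothing to compare beyond noting that you have correctly supplied the standard derivation from the deeper measure-theoretic section theorem.
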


\begin{theorem}
For a stopping time $\tau$ in $\mathcal{T}_0$ and a measurable set $A$. We denote by $\tau_A$ the random variable defined by
\begin{equation}
\tau_A(\omega)= \tau(\omega)\quad \text{if} \quad \omega \in A, \quad \tau_A(\omega)=T \quad \text{if} \quad \omega \in A^c.
\end{equation}
 $\tau_A$ is a stopping time if and only if $A \in \mathcal{F}_\tau$.
\end{theorem}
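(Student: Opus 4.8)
The plan is to reduce the equivalence to the defining conditions of the two objects involved and then to compute a single family of events. Recall that $\tau_A$ is a stopping time precisely when $\{\tau_A \le t\} \in \mathcal{F}_t$ for every $t \in [0,T]$, while $A \in \mathcal{F}_\tau$ means that $A \cap \{\tau \le t\} \in \mathcal{F}_t$ for every $t \in [0,T]$ (which, at $t=T$, forces $A \in \mathcal{F}_T$). Since $A$ is assumed measurable, both statements hinge on the same level sets, so the whole argument rests on describing $\{\tau_A \le t\}$ explicitly.

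First I would split $\{\tau_A \le t\} = (\{\tau_A \le t\} \cap A) \cup (\{\tau_A \le t\} \cap A^c)$. On $A$ one has $\tau_A = \tau$, so the first piece equals $A \cap \{\tau \le t\}$; on $A^c$ one has $\tau_A = T$, so the second piece equals $A^c \cap \{T \le t\}$. Because $\{T \le t\} = \emptyset$ for $t < T$, this yields the key identity
\[
\{\tau_A \le t\} = A \cap \{\tau \le t\}, \qquad t < T,
\]
whereas at $t = T$ one gets $\{\tau_A \le T\} = (A \cap \Omega) \cup A^c = \Omega$.

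With this identity the equivalence follows in both directions. If $A \in \mathcal{F}_\tau$, then $A \cap \{\tau \le t\} \in \mathcal{F}_t$ for each $t$, hence $\{\tau_A \le t\} \in \mathcal{F}_t$ for $t < T$, and $\{\tau_A \le T\} = \Omega \in \mathcal{F}_T$; thus $\tau_A$ is a stopping time. Conversely, if $\tau_A$ is a stopping time, then for $t < T$ the identity gives $A \cap \{\tau \le t\} = \{\tau_A \le t\} \in \mathcal{F}_t$, while for $t = T$ we have $A \cap \{\tau \le T\} = A \in \mathcal{F}_T$ by measurability of $A$; so $A$ satisfies the defining conditions of $\mathcal{F}_\tau$.

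I expect no genuine difficulty here; the only point deserving attention is the endpoint $t = T$. There the contribution of $A^c$ (arising from the value $T$ assigned to $\tau_A$ off $A$) makes $\{\tau_A \le T\}$ trivial, so the stopping-time property yields no information at $t = T$. It is precisely the standing hypothesis that $A$ is measurable, i.e. $A \in \mathcal{F}_T$, that supplies the missing requirement in the characterization of $\mathcal{F}_\tau$, and this is the subtlety one must be careful to invoke.
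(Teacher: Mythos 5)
Your proof is correct. The paper does not prove this statement at all --- it is quoted as a classical result from Dellacherie--Meyer (Théorie des Martingales, Theorem 53), so there is no argument to compare against; your computation of the level sets $\{\tau_A \le t\} = A \cap \{\tau \le t\}$ for $t<T$ and $\{\tau_A \le T\}=\Omega$ is the standard textbook derivation, and you correctly identify the one delicate point, namely that the case $t=T$ yields no information and the converse direction must appeal to the standing assumption $A \in \mathcal{F}_T$.
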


\begin{definition}(Driver, Lipschitz driver). A function $g$ is said to be a driver if 
\begin{enumerate}
\item[(i)] $g:\Omega \times [0,T] \times \mathbb{R}^2 \rightarrow \mathbb{R}$\\
         $(\omega,t,y,z)  \mapsto g(\omega,t,y,z)$ is $\mathcal{P} \otimes \mathcal{B}(\mathbb{R}^2)-$ measurable,
\item[(ii)]$g(.,0,0) \in \mathbf{H}^2$.
\end{enumerate}
A driver $g$ is called a Lipschitz driver if moreover there exists a constant $K> 0$ such that $dP \otimes dt$-a.s., for each $(y_1,z_1)\in \mathbb{R}^2$, $(y_2,z_2)\in \mathbb{R}^2$, 
$$\vert g(w,t,y_1,z_1)-g(w,t,y_2,z_2) \vert \leq K ( \vert y_1-y_2 \vert + \vert z_1-z_2 \vert).$$
\end{definition}

We recall the definition of mutually singular random measures associated with nondecreasing cadlag predictable processes from ~\cite[Definition 2.3., p. 5]{Generalized Dynkin Games and Doubly reflected BSDEs with jumps}. 

\begin{definition}
Let $A=(A_t)_{0 \leq t\leq T}$ and $A'=(A'_t)_{0 \leq t\leq T}$ be two real-valued predictable nondecreasing cadlag processes with $A_0=0$, $A'_0=0$, $E[A_T]<\infty$ and $E[A'_T]<\infty$. We say that the random measures $dA_t$ and $dA'_t$ are \textit{mutually singular} (in a probabilistic sense), and we write $dA_t \perp dA'_t$, if there exists $D\in \mathcal{O}$ such that: 
\begin{equation}
E \left[ \int_0^T \textbf{1}_{D^c} dA_t \right]= E \left[ \int_0^T \textbf{1}_{D} dA'_t \right]=0,
\end{equation}
which can also be written as $ \int_0^T \textbf{1}_{D^c_t} dA_t= \int_0^T \textbf{1}_{D_t} dA'_t  $ a.s., where for each $t\in [0,T]$, $D_t$ is the section at time $t$ of $D$, that is, $D_t:= \lbrace \omega\in \Omega, (\omega,t)\in D \rbrace$. 
\end{definition}

\begin{definition}(Admissible obstacles).
 Let $\xi=(\xi_t)_{t\in[0,T]}$ and $\zeta=(\zeta_t)_{t\in[0,T]}$ be two processes in $\mathbf{S}^{2}$, such that $\xi_t \leq \zeta_t$, $0\leq t \leq T$ a.s. and $\xi_T = \zeta_T$ a.s. A pair of processes $(\xi,\zeta)$ satisfying the previous properties will be called a pair of admissible obstacles, or a pair of admissible barriers.
\end{definition}

\section{Optimal Stopping Problem Over Split Stopping Times and Reflected BSDEs}\label{s3}

\hspace*{0.5 cm} Let $T > 0$ be a fixed terminal time (as before). The following notion was first introduced by Bismut \cite{Bismut1} under the name “quasi-stopping times”, in a different form and resumed by Dellacherie and Meyer in \cite{Probabilites et Potentiel2}.
From practical point of view, it can be seen as a larger set of stopping strategies than the set of stopping times $\mathcal{T}_0$. The definitions below can be found in \cite{Probabilites et Potentiel2}. 
% then generalized by Maingueneau in \cite{Maingueneau}. 
  %stopping strategies than the set of stopping times $\mathcal{T}_0$. The definition below can be found in \cite{Probabilites et Potentiel2}.

\begin{definition} (\textbf{Split stopping time}) \label{s.s.t}
A split stopping time (abbreviated s.s.t.) is an ordered pair $\rho = (H, \tau)$, where $\tau$ is a stopping time in $\mathcal{T}_0$ and $H$ an element of $ \mathcal{F}_\tau$ such that $\tau_H$ is a predictable stopping time (hence $H$ in fact belongs to $\mathcal{F}_{\tau^-}$). 
$\rho$ is called bounded,  finite, ... if $\tau$ is bounded, finite, ...
\end{definition}
For a strong supermartingale or more generally a process $X$ which a.s. admits left limits and a stopping system $\rho = (H, \tau)$, we set
\begin{equation}\label{Xrho}
X_\rho :=  X_{\tau^-} \mathbf{1}_{H} + X_\tau \mathbf{1}_{H^c},
\end{equation}
with the convention that $X_{0^-}=X_0$. If $X$ is optional, $X_\rho$ is measurable with respect to the $\sigma$-algebra $\mathcal{F}_\rho$, formed by the $A \in \mathcal{F}_\tau$ such that $A \cap H \in \mathcal{F}_{\tau^-}$. \\
Let $\rho=(H, \tau)$ and $\delta=(G, \sigma)$ be two split stopping times; we write $\rho \geq \delta$ (resp. $\rho > \delta$) if $\tau \geq \sigma$ a.s. (resp. $\tau > \sigma$ a.s. on $\lbrace \sigma <T \rbrace$ and $\tau = \sigma$ a.s. on $\lbrace \sigma = T \rbrace$), and if $H \cap \lbrace \tau = \sigma \rbrace \subset G \cap \lbrace \tau = \sigma \rbrace$. $ $ It is easily seen that $\mathcal{F}_\delta \subset \mathcal{F}_\rho$.\\

In real-world applications, the event $H$ in Definition \ref{s.s.t} represents the available information up to and including the stopping time $\tau$. To illustrate this concept, let us consider an example in the context of a stock market. 
\begin{example} 
     Suppose we are modeling the price of a stock over time using a stochastic process $X$. A stopping time $\tau$ in this context could represent the time at which an investor decides to sell the stock. The event $H$ could represent the information available to the investor up to the time of the sale, such as the current market conditions, news about the company, or other factors that could influence the stock price.

In this case, a split stopping time $(H, \tau)$ would represent the situation where the investor has access to information up to the time of the sale, and can make a decision based on that information. The left limit $X_{\tau^-}$ of the stock price process at the stopping time $\tau$ would represent the value of the stock just before the sale, and $X_\tau$ would represent the value of the stock at the time of the sale. The concept of a split stopping time allows us to capture the idea that the decision to sell the stock may be based on both past and present information, and that the value of the stock at the time of the sale may be influenced by both factors. The definition of $X_\rho = X_{\tau^-} \mathbf{1}_{H} + X_\tau \mathbf{1}_{H^c}$ then allows us to compute the value of the stock at the time of the sale, taking into account the available information up to that point.
\end{example}

Let now $H^T $ be a fixed set in $\mathcal{F}_{T^-}$. We define $\rho^T$ by the split terminal time $(H^T, T)$.
We denote by $\mathcal{S}_0$ the set of all split stopping times $\delta=(G, \sigma)$ such that $\delta \leq \rho^T$, i.e. $\sigma  \leq T$ a.s. and $H^T \cap \lbrace  \sigma = T \rbrace \subset G \cap \lbrace  \sigma = T \rbrace$. More generally, for a given split stopping time $\delta$ in $S_0$, we denote by $ \mathcal{S}_\delta$ (resp. $S_{\delta+}$) the set of split stopping times $\rho=(H, \tau) \in \mathcal{S}_0$ such that $\delta \leq \rho$ (resp. $\delta < \rho$) in the above sense. We denote also by $\mathcal{S}^p_0$ the set of split stopping times $\rho=(H,\tau)\in \mathcal{S}_0$ such that $\tau \in \mathcal{T}^p_0$.
\begin{remark} \label{rmk}
\begin{enumerate}
\item Every stopping time $\tau \in \mathcal{T}_0$ can be associated with the split stopping time
$\rho=(\emptyset, \tau)$ which we identify with $\tau$. In this particular case, we have $X_\rho = X_\tau$, so the notation (\ref{Xrho}) is consistent. We see that the notion of split stopping time generalizes that of a stopping time (in the usual sense). 

\item If $\tau$ is predictable, we can associate
with it the split stopping time $(\Omega, \tau)$, which we denote by $\tau^-$. 
\end{enumerate}
\end{remark}

The following theorem can be found in \cite{Probabilites et Potentiel2}.
\begin{theorem} \label{str}
Let $X$ be an optional strong supermartingale. Let $\rho=(H,\tau)$ and $\delta=(G,\sigma)$ be two split stopping times such that $\rho \leq \delta$. Then 
\begin{equation}
X_\rho \geq \, E[X_\delta \vert  \mathcal{F}_\rho] \,\,\,\, \text{a.s.}
\end{equation}
with equality if $X$ is an optional strong martingale.
\end{theorem}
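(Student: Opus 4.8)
The plan is to reduce the statement for split stopping times to two classical facts: the optional sampling inequality for strong supermartingales at ordinary stopping times (which is exactly the defining property of $X$), and a left-limit inequality at predictable stopping times. Since $X_\rho$ is $\mathcal{F}_\rho$-measurable, it suffices to show that $E[X_\rho \mathbf{1}_B] \geq E[X_\delta \mathbf{1}_B]$ for every $B \in \mathcal{F}_\rho$ (taking $B = \{E[X_\delta \mid \mathcal{F}_\rho] > X_\rho\}$ then forces the a.s. inequality). I would first record the key lemma: for any predictable stopping time $\eta$ one has $X_{\eta^-} \geq E[X_\eta \mid \mathcal{F}_{\eta^-}]$ a.s. This follows by choosing an announcing sequence $\eta_n \uparrow \eta$ with $\eta_n < \eta$, writing the defining inequality $X_{\eta_n} \geq E[X_\eta \mid \mathcal{F}_{\eta_n}]$, and letting $n \to \infty$: the left side tends to $X_{\eta^-}$ because $X$ has left limits, while the right side tends to $E[X_\eta \mid \mathcal{F}_{\eta^-}]$ by L\'evy's upward convergence theorem, since $\bigvee_n \mathcal{F}_{\eta_n} = \mathcal{F}_{\eta^-}$ (no uniform integrability is needed here, as $X_\eta$ is a fixed integrable variable). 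For a strong martingale the defining relation is an equality, so the same passage yields $X_{\eta^-} = E[X_\eta \mid \mathcal{F}_{\eta^-}]$.

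The crucial structural observation is that the left-limit parts of $\rho$ and $\delta$ sit at the predictable times $\tau_H$ and $\sigma_G$ (predictable by the very definition of a split stopping time), so announcing sequences are available exactly where they are needed. I would then partition $\Omega$ according to $\{\tau < \sigma\}$ and $\{\tau = \sigma\}$ and, inside each, according to $H, H^c$ and $G, G^c$. On $\{\tau = \sigma\}$ there is no time gap, and the order relation $\rho \leq \delta$ gives $G \cap \{\tau = \sigma\} \subset H \cap \{\tau = \sigma\}$; hence on $\{\tau=\sigma\} \cap H \cap G$ one has $X_\rho = X_{\tau^-} = X_{\sigma^-} = X_\delta$, on $\{\tau=\sigma\} \cap H^c$ (which is contained in $G^c$) one has $X_\rho = X_\tau = X_\sigma = X_\delta$, and on the remaining piece $\{\tau=\sigma\} \cap H \cap G^c$ one has $X_\rho = X_{\tau^-}$ and $X_\delta = X_\tau$, so the lemma applied to $\tau_H$ gives $X_\rho \geq E[X_\delta \mid \mathcal{F}_{\tau^-}]$ there.

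On $\{\tau < \sigma\}$ there is a strict time gap, which I would exploit as follows. For the part of $X_\delta$ equal to $X_{\sigma^-}$ (i.e. on $G$), take an announcing sequence $\theta_n \uparrow \sigma_G$; on $\{\tau<\sigma\} \cap G$ one has $\theta_n > \tau$ for $n$ large and $X_{\theta_n} \to X_{\sigma^-}$, so from the ordinary sampling inequality $X_\tau \geq E[X_{\theta_n} \mid \mathcal{F}_\tau]$ and a passage to the limit one obtains $X_\tau \geq E[X_{\sigma^-} \mid \mathcal{F}_\tau]$; on $G^c$ the same inequality with $X_\sigma$ in place of $X_{\sigma^-}$ is immediate from optional sampling. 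In either case $X_\tau \geq E[X_\delta \mid \mathcal{F}_\tau]$ on $\{\tau<\sigma\}$, which settles the region $H^c$ (where $X_\rho = X_\tau$); on the region $H$ one composes this with the lemma applied to $\tau_H$, using the tower property and that the trace of $\mathcal{F}_\rho$ on $H$ coincides with that of $\mathcal{F}_{\tau^-}$, to get $X_\rho = X_{\tau^-} \geq E[X_\delta \mid \mathcal{F}_{\tau^-}]$. Collecting the four regions and integrating against an arbitrary $B \in \mathcal{F}_\rho$ yields $E[X_\rho \mathbf{1}_B] \geq E[X_\delta \mathbf{1}_B]$, hence the claim; the equality assertion for a strong martingale follows because every inequality used above becomes an equality.

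I expect the main obstacle to be twofold. First, the careful bookkeeping on the no-time-gap set $\{\tau=\sigma\}$, where only the order condition $G \cap \{\tau=\sigma\} \subset H \cap \{\tau=\sigma\}$ rules out the bad configuration $G \cap H^c$ in which $X_\delta = X_{\sigma^-}$ could exceed $X_\rho = X_\tau$. Second, justifying the limit passage $E[X_{\theta_n} \mid \mathcal{F}_\tau] \to E[X_{\sigma^-} \mid \mathcal{F}_\tau]$ across the strict gap, which requires uniform integrability of the family $\{X_{\theta_n}\}$; this is guaranteed here because the supermartingale lies in $\mathbf{S}^2$ (or is of class (D)), and for nonnegative $X$ it can alternatively be obtained directly from Fatou's lemma for conditional expectations. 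The remaining measurability checks (that each partition piece lies in the appropriate $\sigma$-algebra, and that $H \in \mathcal{F}_{\tau^-}$ while $G \in \mathcal{F}_\delta \subset \mathcal{F}_\rho$) are routine consequences of the definitions of $\mathcal{F}_\rho$ and of split stopping times.
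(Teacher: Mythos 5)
The paper offers no proof of this theorem: it is quoted from Dellacherie--Meyer \cite{Probabilites et Potentiel2}, so your attempt can only be judged on its own merits. Your overall architecture is reasonable --- the predictable-time lemma $X_{\eta^-}\geq E[X_\eta\mid\mathcal{F}_{\eta^-}]$ via an announcing sequence and L\'evy's theorem is correct, the treatment of the no-gap set $\{\tau=\sigma\}$ using $G\cap\{\tau=\sigma\}\subset H\cap\{\tau=\sigma\}$ is right, and you correctly flag where uniform integrability (class $(D)$, or positivity plus Fatou) is needed. But there is a genuine gap on $\{\tau<\sigma\}$. You derive ``$X_\tau\geq E[X_{\sigma^-}\mid\mathcal{F}_\tau]$ on $G$'' and ``$X_\tau\geq E[X_\sigma\mid\mathcal{F}_\tau]$ on $G^c$'' and glue them into $X_\tau\geq E[X_\delta\mid\mathcal{F}_\tau]$. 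This gluing is not licit: $G$ belongs to $\mathcal{F}_{\sigma^-}$ but in general \emph{not} to $\mathcal{F}_\tau$ (take $\rho=(\emptyset,\tau)$ with $\tau$ deterministic and small, $G\in\mathcal{F}_{\sigma^-}\setminus\mathcal{F}_\tau$; the order condition is then vacuous), so $E[X_\delta\mid\mathcal{F}_\tau]=E[X_{\sigma^-}\mathbf{1}_G\mid\mathcal{F}_\tau]+E[X_\sigma\mathbf{1}_{G^c}\mid\mathcal{F}_\tau]$ cannot be split along $G,G^c$, and neither summand is controlled by your two global inequalities. Your closing measurability claim ``$G\in\mathcal{F}_\delta\subset\mathcal{F}_\rho$'' is the tell: for $\rho\leq\delta$ the inclusion runs the other way, $\mathcal{F}_\rho\subset\mathcal{F}_\delta$, and $G\notin\mathcal{F}_\rho$ in general. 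Moreover, what the limit passage along $\theta_n\vee\tau$ with $\theta_n$ announcing $\sigma_G$ actually produces is $X_\tau\geq E\bigl[X_{\sigma^-}\mathbf{1}_G+X_{T^-}\mathbf{1}_{G^c}\mid\mathcal{F}_\tau\bigr]$, since $\theta_n\uparrow T$ on $G^c$; the unwanted $X_{T^-}\mathbf{1}_{G^c}$ term cannot simply be swapped for $X_\sigma\mathbf{1}_{G^c}$.

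The gap is repairable, but not by a routine remark. One fix is to approximate $X_\delta$ by a \emph{single} sequence of ordinary stopping times, e.g.\ $\nu_n:=(\theta_n\vee\tau)\wedge\sigma$: on $G\cap\{\tau<\sigma\}$ one gets $X_{\nu_n}\to X_{\sigma^-}$, on $G^c\cap\{\sigma<T\}$ one gets $X_{\nu_n}\to X_\sigma$, and then $X_\tau\geq E[X_{\nu_n}\mid\mathcal{F}_\tau]$ passes to the limit; the residual set $G^c\cap\{\sigma=T\}$ (where $\nu_n$ still converges from the left) must be handled separately by your lemma applied at the deterministic, hence predictable, time $T$, using that $G^c\cap\{\sigma=T\}\in\mathcal{F}_{T^-}$. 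An alternative and cleaner route, closer to the source, is to run the whole argument through the Mertens decomposition $X=M-A-B_-$: the increasing parts satisfy $A_\rho+B_{\tau^-}\leq A_\delta+B_{\sigma^-}$ pathwise from the order $\rho\leq\delta$, which reduces everything to the strong martingale identity $E[M_\delta\mid\mathcal{F}_\rho]=M_\rho$ for a cadlag uniformly integrable martingale, where the $G$/$G^c$ localization can be performed under $\mathcal{F}_{\sigma_G^-}$, to which $G$ does belong.
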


\hspace*{0.5 cm} We define now reflected BSDEs, for which the solution is constrained to stay
above a given ladlag process $\xi$ called obstacle or barrier.  %In what follows $\rho^T$ denotes the terminal split stopping time $(\emptyset ,T)$ associated with $T$. 

\begin{definition}\label{def2} Let $g$ be a driver. Let $\xi=(\xi_t)_{t\in[0,T]}$ be a ladlag process in $\mathbf{S}^{2}$.
A process $(Y,Z,M,A,B) \in \mathbf{S}^{2} \times \mathbf{H}^2 \times \mathbf{M}^{2,\perp} \times (\mathbf{S}^{2})^2$ is said to be solution to the reflected BSDE with (lower) barrier $\xi$ and driver g, if for all $\delta \in \mathcal{S}_0$
%parameters $(0,\xi)$, where $0$ is the driver and $\xi$ is a process in $\mathbf{S}^{2,p}$, if
\begin{align} \label{RBSDE}
Y_\delta=\xi_{\rho^T} + \int_\delta^{\rho^T}  g(s,Y_s,Z_s) ds &- \int_\delta^{\rho^T} Z_s dW_s - (M_{\rho^T}-M_{\delta}) +  A_{\rho^T}-A_\delta \nonumber \\
&+ B_{{\rho^T}^-}-B_{\delta^-} \quad \text{a.s.}, 
\end{align} 
 %\,\, for all}\,\,\, \delta \in \mathcal{S}_0, \,\, \delta \leq \rho^T, 
\begin{equation}
Y_\delta \geq \xi_\delta \quad \text{a.s.}
\end{equation}
The process $A$ is a nondecreasing right-continuous predictable process with $A_0 = 0$, $ E(A_{\rho^T}) < \infty$ such that:
\begin{align}
\int_0^{\rho^T} \textbf{1}_{\lbrace Y_{t^- }> \xi_{t^-} \rbrace} dA^c_t=0 \,\,\, \text{a.s. and     }  (Y_{\delta^-} - \xi_{\delta^-})(A^d_\delta - A^d_{\delta^-})=0 \,\,\, \text{a.s.} \text{ for all $\delta \in \mathcal{S}^p_0$},  \label{sko1'}
\end{align}
$B$ is a nondecreasing right-continuous adapted purely discontinuous process with $B_{0^-}=0$, $  E(B_{\rho^T}) < \infty$   and such that 
\begin{equation} \label{sko2'}
(Y_\delta-\xi_\delta)(B_\delta -B_{\delta^-})=0 \,\,\, \text{a.s. for all $\delta \in \mathcal{S}_0$.} 
\end{equation}
Here $A^c$ denotes the continuous part of the nondecreasing process $A$ and $A^d$ its
discontinuous part. Conditions (\ref{sko1'}) and (\ref{sko2'}) are called minimality conditions or
Skorohod conditions. 
\end{definition}

\begin{remark} \label{reee}
The Skorohod condition for $A^d$ (resp. $B$) can be rewritten as 
\begin{align}
&(Y_{\sigma^-} - \xi_{\sigma^-})(A^d_\sigma - A^d_{\sigma^-}) \mathbf{1}_{G^c}= 0 \,\, \text{a.s. for all $\delta=(G,\sigma) \in \mathcal{S}^p_0$} \label{equivalent Ad} \\ 
&(\text{resp.} \,\,\, (Y_\sigma-\xi_\sigma)(B_\sigma -B_{\sigma^-})\mathbf{1}_{G^c} =0 \,\,\, \text{a.s. for all}\,\,\, \delta=(G,\sigma) \in \mathcal{S}_0).
\end{align}
Indeed, recall that if $X$ is an optional left-limited process, the process $\bar{X}=(\bar{X}_t):=(X_{t^-})$ is predictable and we set for every $\delta=(G,\sigma) \in \mathcal{S}^p_0$,
\begin{align*}
X_{\delta^-} = \bar{X}_{\delta} &:= \bar{X}_{\sigma^-} \mathbf{1}_G + \bar{X}_{\sigma} \mathbf{1}_{G^c} \\
&= X_{\sigma^-} \mathbf{1}_G + X_{\sigma^-} \mathbf{1}_{G^c}= X_{\sigma^-}.
\end{align*}
Now since the processes $\xi$, $Y$ and $A$ are ladlag we get for every $\delta=(G,\sigma) \in \mathcal{S}^p_0$,  $Y_{\delta^-}=Y_{\sigma^-}$, $\xi_{\delta^-}=\xi_{\sigma^-}$, $A^d_{\delta^-}=A^d_{\sigma^-}$ and 
 \begin{align*}
 (Y_{\delta^-} - \xi_{\delta^-})(A^d_\delta - A^d_{\delta^-})= (Y_{\sigma^-} - \xi_{\sigma^-})(A^d_\delta - A^d_{\sigma^-}).
 \end{align*}
One can see that on $G$, $A^d_\delta:= A^d_{\sigma^-}$ thus $(Y_{\sigma^-} - \xi_{\sigma^-})(A^d_\delta - A^d_{\sigma^-})=0$ a.s. and on $G^c$, $A^d_\delta:= A^d_{\sigma}$ hence $(Y_{\delta^-} - \xi_{\delta^-})(A^d_\delta - A^d_{\delta^-})= (Y_{\sigma^-} - \xi_{\sigma^-})(A^d_\sigma - A^d_{\sigma^-})$ a.s.  Therefore, (\ref{equivalent Ad}) implies the Skorohod condition (\ref{sko1'}) for $A^d$. The same reasoning applies to $B$.
%  Note also that for all $\delta=(G,\sigma) \in \mathcal{S}_0$, $\delta \leq \rho^T =(\emptyset, T)$, since $\emptyset \cap \lbrace \sigma =T \rbrace \subset G \cap \lbrace \sigma =T \rbrace$. 
\end{remark}

\begin{remark}
We note that, by the section theorem \ref{section}, a process $(Y,Z,M,A,B) \in \mathbf{S}^{2} \times \mathbf{H}^2 \times \mathbf{M}^{2,\perp} \times (\mathbf{S}^{2})^2$ satisfies (\ref{RBSDE}) in the above definition, if and only if, a.s. for all $t\in [0,T]$,
\begin{align} \label{RBSDE*}
Y_t=\xi_{\rho^T} + \int_t^{^T}  g(s,Y_s,Z_s) ds - \int_t^{T} Z_s dW_s - (M_{\rho^T}-M_{t}) +  A_{\rho^T}-A_t + B_{{^{\rho^T}}^{-}}-B_{t^-}. 
\end{align} 
In the particular case where $\rho^T:=(\emptyset, T)$, the process $(Y,Z,M,A,B) \in \mathbf{S}^{2} \times \mathbf{H}^2 \times \mathbf{M}^{2,\perp} \times (\mathbf{S}^{2})^2$ satisfies (\ref{RBSDE}), if and only if, a.s. for all $t\in [0,T]$,
\begin{align*} 
Y_t=\xi_{T} + \int_t^{^T}  g(s,Y_s,Z_s) ds - \int_t^{T} Z_s dW_s - (M_{T}-M_{t}) +  A_{T}-A_t + B_{{^{T}}^{-}}-B_{t^-}. 
\end{align*} 
\end{remark}
\begin{remark}
If we rewrite Eq. \eqref{RBSDE}  forwardly, we obtain $\Delta B_\delta = -(Y_{\delta^+} - Y_\delta)$ a.s. for all $\delta \in \mathcal{S}_0$, hence $ Y_\delta \geq Y_{\delta^+}$ a.s. From the Skorokhod condition (\ref{sko2'}), we derive $(Y_\delta - \xi_\delta )(Y_{\delta^+} - Y_\delta ) = 0$ a.s. for all $\delta \in \mathcal{S}_0$. This, together with $Y_\delta \geq \xi_\delta$ a.s. and $ Y_\delta \geq Y_{\delta^+}$ a.s. yields $$Y_\delta= Y_{\delta^+} \vee \xi_\delta \quad \text{a.s. for all} \quad \delta \in \mathcal{S}_0.$$
\end{remark}

\subsection{Optimal Stopping Problem Over Split Stopping Times \label{sect3.1}} 

\hspace{0.5 cm} In this subsection, we revisit the notion of Snell envelope over split stopping times. To this end, we present some preliminary results on the value families $v$ and $v^+$ indexed by split stopping times when the reward is given by a ladlag process and in the case of a general filtration.

 Let us first introduce the definition of an admissible family of random variables indexed by split stopping times in $\mathcal{S}_0$.

\begin{definition}(\textbf{Admissible family indexed by s.s.t.}) \label{admissible}
We say that a family $U = (U (\delta), \delta \in \mathcal{S}_0)$ is admissible if it satisfies the
following conditions
\begin{enumerate}
\item For all $\delta \in \mathcal{S}_0$, $U(\delta)$ is a real-valued $\mathcal{F}_\delta$-measurable random variable. \label{prop1}
\item For all $\delta=(G,\sigma), \delta'=(G',\sigma') \in \mathcal{S}_0$, $U(\delta)= U(\delta')$ a.s. on $\lbrace \delta = \delta'\rbrace$. \label{prop2}
\end{enumerate}
If moreover for all $\delta \in \mathcal{S}_0$, $U(\delta)$ is square-integrable, we say that the admissible family $U$ is square-integrable. 
\end{definition}

 \begin{remark}
    Note that if  $U=(U(\rho), \rho \in \mathcal{S}_0)$ is an admissible family, then for each $\delta\in \mathcal{S}_0$ and $A \in \mathcal{F}_\delta$, the family $(U(\rho) \mathbf{1}_A, \rho \in \mathcal{S}_\delta)$ satisfies the properties \ref{prop1} and \ref{prop2} of Definition \ref{admissible} with $\mathcal{S}_0$ replaced by $\mathcal{S}_\delta$, and it is said to be $\delta$-admissible.
   \end{remark}

Let $(\xi_t)_{t\leq T}$ be a ladlag process in $\mathbf{S}^2$, called the reward process or the \textit{pay-off} process.  For each $\delta \in \mathcal{S}_0$, we define the value $v(\delta)$ at time $\delta$ by
\begin{equation} \label{v}
v(\delta) := \esssup_{\rho=(H,\tau) \in \mathcal{S}_{\delta}} E[ \xi_\rho \vert \mathcal{F}_\delta],
\end{equation}
and the strict value function $v^+(\delta)$ at time $\delta$ by
\begin{equation} \label{v+}
v^+(\delta) := \esssup_{\rho=(H,\tau) \in \mathcal{S}_{\delta^+}} E[ \xi_\rho \vert \mathcal{F}_\delta].
\end{equation}
Note that $v(\delta)=v^+(\delta)=\xi_{\rho^T}$ a.s. on $\lbrace \delta = \rho^T \rbrace$.

\begin{proposition} (Admissibility of $v$ and $v^+$) \label{v v+ admiss}
The families $v =(v(\delta), \delta \in \mathcal{S}_0)$ and $v^+ = (v^+(\delta), \delta \in \mathcal{S}_0)$ defined by (\ref{v}) and (\ref{v+}) are square-integrable admissible families.
\end{proposition}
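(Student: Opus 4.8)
The plan is to verify the three defining requirements of Definition \ref{admissible} in turn: the $\mathcal{F}_\delta$-measurability of item \ref{prop1}, square-integrability, and the consistency property of item \ref{prop2}, the last being where the real work lies. For the measurability, fix $\delta\in\mathcal{S}_0$; for each $\rho\in\mathcal{S}_\delta$ the random variable $E[\xi_\rho\mid\mathcal{F}_\delta]$ is by construction $\mathcal{F}_\delta$-measurable, and since an essential supremum of a family of $\mathcal{F}_\delta$-measurable random variables can be realized as a countable supremum it is again $\mathcal{F}_\delta$-measurable. Hence $v(\delta)$ and $v^+(\delta)$ are $\mathcal{F}_\delta$-measurable, which settles item \ref{prop1}.

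For square-integrability, I would set $\Lambda:=\esssup_{\tau\in\mathcal{T}_0}|\xi_\tau|$, so that $E[\Lambda^2]=\vertiii{\xi}^2_{\mathbf{S}^2}<\infty$, and first show $|\xi_\rho|\leq\Lambda$ a.s. for every split stopping time $\rho=(H,\tau)$. On $H^c$ this is immediate from $|\xi_\tau|\leq\Lambda$. On $H$ one uses that $\tau_H$ is a predictable stopping time by Definition \ref{s.s.t}, so $\xi_{\tau^-}\mathbf{1}_H=\xi_{(\tau_H)^-}\mathbf{1}_H$; taking an announcing sequence $\sigma_n\uparrow\tau_H$ with $\sigma_n<\tau_H$ gives $|\xi_{(\tau_H)^-}|=\lim_n|\xi_{\sigma_n}|\leq\Lambda$ a.s. (the convention $\xi_{0^-}=\xi_0$ covering $\tau_H=0$). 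Consequently $|E[\xi_\rho\mid\mathcal{F}_\delta]|\leq E[\Lambda\mid\mathcal{F}_\delta]$ for every $\rho$, so $|v(\delta)|\leq E[\Lambda\mid\mathcal{F}_\delta]$ and likewise for $v^+$, and Jensen's inequality together with the tower property yields $E[v(\delta)^2]\leq E\big[E[\Lambda\mid\mathcal{F}_\delta]^2\big]\leq E[\Lambda^2]<\infty$.

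For the consistency property, fix $\delta=(G,\sigma)$ and $\delta'=(G',\sigma')$ and put $A:=\{\delta=\delta'\}$. I would first record that $A\in\mathcal{F}_\delta\cap\mathcal{F}_{\delta'}$ and that the traces of $\mathcal{F}_\delta$ and $\mathcal{F}_{\delta'}$ on $A$ coincide (the split-stopping-time analogue of $\mathcal{F}_\sigma\cap\{\sigma=\sigma'\}=\mathcal{F}_{\sigma'}\cap\{\sigma=\sigma'\}$). By symmetry it suffices to prove $v(\delta)\mathbf{1}_A\leq v(\delta')\mathbf{1}_A$. Given any $\rho=(H,\tau)\in\mathcal{S}_\delta$, I would paste it onto $\delta'$ along $A$ by setting $\tilde\tau:=\tau\mathbf{1}_A+\sigma'\mathbf{1}_{A^c}$ and $\tilde H:=(H\cap A)\cup(G'\cap A^c)$, claiming that $\rho':=(\tilde H,\tilde\tau)\in\mathcal{S}_{\delta'}$ and $\xi_{\rho'}=\xi_\rho$ on $A$. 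Granting this, the membership $A\in\mathcal{F}_{\delta'}$ and the coincidence of traces give $E[\xi_{\rho'}\mid\mathcal{F}_{\delta'}]\mathbf{1}_A=E[\xi_\rho\mid\mathcal{F}_\delta]\mathbf{1}_A$, whence $v(\delta')\mathbf{1}_A\geq E[\xi_\rho\mid\mathcal{F}_\delta]\mathbf{1}_A$; taking the essential supremum over $\rho\in\mathcal{S}_\delta$ yields $v(\delta')\mathbf{1}_A\geq v(\delta)\mathbf{1}_A$, and symmetry closes the argument. The same pasting, carried out over $\mathcal{S}_{\delta^+}$, handles $v^+$.

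The delicate point, and the step I expect to absorb most of the effort, is verifying that the pasted pair $\rho'=(\tilde H,\tilde\tau)$ is genuinely a split stopping time with $\rho'\geq\delta'$. One must check that $\tilde\tau$ is a stopping time (using $A\in\mathcal{F}_\sigma\cap\mathcal{F}_{\sigma'}$ and $\tau\geq\sigma=\sigma'$ on $A$), that $\tilde\tau_{\tilde H}$ is predictable (by pasting the predictable stopping times $\tau_H$ and $\sigma'_{G'}$ along $A\in\mathcal{F}_{\delta'}$), and that $\tilde H\cap\{\tilde\tau=\sigma'\}\subset G'\cap\{\tilde\tau=\sigma'\}$, which on $A^c$ is clear since $\tilde H\cap A^c=G'\cap A^c$, and on $A$ follows from $H\cap\{\tau=\sigma\}\subset G\cap\{\tau=\sigma\}$ together with $G\cap A=G'\cap A$. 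Each of these reduces to careful bookkeeping with the definitions of Section \ref{s3}, but none presents a conceptual difficulty once the trace identity $\mathcal{F}_\delta\cap A=\mathcal{F}_{\delta'}\cap A$ is in hand.
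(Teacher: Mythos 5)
Your overall strategy is the same as the paper's: dispose of measurability and square-integrability by standard arguments (your bound via $\Lambda=\esssup_{\tau\in\mathcal{T}_0}|\xi_\tau|$ and the left-limit estimate is essentially the paper's inequality \eqref{Eq5}), and prove the consistency property by pasting a competitor $\rho\in\mathcal{S}_\delta$ onto a competitor for $\delta'$ along $A=\{\delta=\delta'\}$, using $A\in\mathcal{F}_\delta\cap\mathcal{F}_{\delta'}$ and the coincidence of the traces of the two $\sigma$-algebras on $A$. All of that is sound and matches the paper's proof in structure.

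There is, however, one concrete defect: your final sentence, ``the same pasting, carried out over $\mathcal{S}_{\delta^+}$, handles $v^+$,'' does not go through with the pasting you chose. You set $\tilde\tau=\tau\mathbf{1}_A+\sigma'\mathbf{1}_{A^c}$ and $\tilde H=(H\cap A)\cup(G'\cap A^c)$, i.e.\ the pasted time coincides with $\delta'$ on $A^c$. That is fine for $v$, where membership in $\mathcal{S}_{\delta'}$ only requires $\tilde\tau\geq\sigma'$; but for $v^+$ you need $\rho'\in\mathcal{S}_{\delta'^+}$, which requires $\tilde\tau>\sigma'$ a.s.\ on $\{\sigma'<T\}$, and this fails on $A^c\cap\{\sigma'<T\}$ where $\tilde\tau=\sigma'$. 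The paper sidesteps this by pasting with the terminal split stopping time on $A^c$, taking $\bar\tau=\tau\mathbf{1}_A+T\mathbf{1}_{A^c}$ and $\bar H=(H\cap A)\cup(H^T\cap A^c)$, which lands in $\mathcal{S}_{\delta'^+}$ (and also in $\mathcal{S}_{\delta'}$, so it covers both families at once). Since the values on $A^c$ are irrelevant to the inequality $v^+(\delta)\mathbf{1}_A\leq v^+(\delta')\mathbf{1}_A$, this is a one-line repair, but as written your argument for $v^+$ produces a competitor outside the admissible class.
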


\begin{proof} 
Let us prove the property for $(v^+(\delta), \delta \in \mathcal{S}_0)$. Thanks to the characterizations of the conditional expectation and of the essential supremum (see Neveu \cite{Neveu}), one can easily see that for each $\delta \in \mathcal{S}_0$, $v^+(\delta)$ is an $\mathcal{F}_\delta$-measurable square-integrable random variable. Let now $\delta=(G, \sigma)$ and $\delta'=(G', \sigma')$ be two split stopping times in $\mathcal{S}_0$, and set $A:= \lbrace \delta = \delta' \rbrace$. For every $\rho=(H, \tau) \in \mathcal{S}_{\delta^+}$, we set $\rho_A=(\bar{H}, \bar{\tau})$ where $\bar{H}:= H \cap A \cup H^T \cap A^c$ and $\bar{\tau} := \tau \mathbf{1}_{A} + T \mathbf{1}_{A^c}$. As $A=\lbrace  \sigma = \sigma'\rbrace \cap \lbrace G=G' \rbrace \in \mathcal{F}_\sigma \cap \mathcal{F}_{\sigma'}$, $\bar{\tau}$ is a stopping time in $\mathcal{T}_0$ and $\bar{H} \in \mathcal{F}_{\bar{\tau}}$. Moreover, $\bar{\tau}_{\bar{H}} = \tau_H \mathbf{1}_{A} + T \mathbf{1}_{A^c}$ is a predictable stopping time, hence $\rho_A$ is a split stopping time. Additionally, we have $\rho_A \in \mathcal{S}_{\delta'^+}$. Indeed, since $\rho \in \mathcal{S}_{\delta^+}$ we get $T>\bar{\tau} > \sigma'$ a.s. on both $\lbrace \sigma'<T \rbrace \cap A$ and $\lbrace \sigma'<T \rbrace \cap A^c$. Also we have $\bar{\tau} = T$ a.s on $\lbrace \sigma'=T \rbrace \cap A$ and $\lbrace \sigma'=T \rbrace \cap A^c$, thus $\bar{\tau} \in \mathcal{T}_{\sigma'+}$. On the other hand, %\big[ H \cap \lbrace \sigma'= \bar{\tau} \rbrace \cap A \big] \cup \big[ H^T \cap \lbrace \sigma'= \bar{\tau} \rbrace \cap A^c \big] \\&
\begin{align*}
\bar{H} \cap \lbrace \sigma'= \bar{\tau} \rbrace &= \big[ H \cap \lbrace \sigma= \tau \rbrace \cap A \big] \cup \big[ H^T \cap \lbrace \sigma'= T \rbrace \cap A^c \big]\\
&\subset \big[ G' \cap \lbrace \sigma= \tau \rbrace \cap A \big] \cup \big[ G' \cap \lbrace \sigma'= T \rbrace \cap A^c \big] \\
&= G' \cap \lbrace \sigma' = \bar{\tau}\rbrace, 
\end{align*}
and 
\begin{align*}
H^T \cap \lbrace \bar{\tau}= T \rbrace &= \big[ H^T \cap \lbrace \tau = T \rbrace \cap A \big] \cup \big[ H^T \cap \lbrace \bar{\tau}=T \rbrace \cap A^c \big] \\
&\subset \big[ H \cap \lbrace \bar{\tau} = T \rbrace \cap A \big] \cup \big[ H^T \cap \lbrace \bar{\tau}=T \rbrace \cap A^c \big] \\
&= \bar{H} \cap \lbrace \bar{\tau}= T \rbrace, 
\end{align*}
which proves that $\rho_A \in \mathcal{S}_{\delta'^+}$. One can see that $A\in \mathcal{F}_\delta \cap \mathcal{F}_{\delta'}$. Indeed, since $\sigma'_{G'}$ is a predictable stopping time, we have (see e.g., ~\cite[Theorem 73, p. 205]{Probabilites et Potentiel1})
\begin{align*}
\lbrace  \sigma = \sigma'\rbrace \cap \lbrace G=G' \rbrace \cap G = \lbrace  \sigma = \sigma'_{G'} \rbrace \cap \lbrace G=G' \rbrace \cap G' \in \mathcal{F}_{\sigma^-}.  
\end{align*}
Thus, $A\in\mathcal{F}_\delta$ and by symmetry of $\delta$ and $\delta'$ we get $A\in \mathcal{F}_\delta \cap \mathcal{F}_{\delta'}$. 
 Therefore, we obtain a.s. on $A$, $E[\xi_\rho \vert \mathcal{F}_\delta] =E[\xi_{\rho_A} \vert \mathcal{F}_\delta]= E[\xi_{\rho_A} \vert \mathcal{F}_{\delta'}] \leq v^+(\delta').$
By taking the esssup over $\mathcal{S}_{\delta^+}$ on both sides, we get $ \mathbf{1}_A v^+(\delta) \leq \mathbf{1}_A v^+(\delta')$. Again by symmetry of $\delta$ and $\delta'$, we obtain the converse inequality and the proof is complete. In the same manner we show that $(v(\delta), \delta \in \mathcal{S}_0)$ is admissible. 
\end{proof}

\begin{proposition}(\textbf{Optimizing sequences for $v$ and $v^+$})\label{opti_seq}
For each $\delta \in \mathcal{S}_0$, there exists a sequence of split stopping times $(\rho_n)_{n\in\mathbb{N}}$ in $\mathcal{S}_\delta$ (resp. $\mathcal{S}_{\delta^+}$), such that the sequence $(E[\xi_{\rho_n} \vert \mathcal{F}_\delta])_{n\in \mathbb{N}}$ is non-decreasing and 
$$v(\delta) \quad (\text{resp.} v^+(\delta))\,\,\, = \lim\limits_{n\rightarrow \infty} \uparrow E[\xi_{\rho_n} \vert \mathcal{F}_\delta] \quad \text{a.s.}$$
\end{proposition}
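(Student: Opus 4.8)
The plan is to deduce the statement from the classical characterization of the essential supremum: if a family of integrable random variables is stable under pairwise maximization, then its essential supremum is the almost sure non-decreasing limit of a sequence extracted from the family (see Neveu \cite{Neveu}). Hence it suffices to show that the family $\{E[\xi_\rho\mid\mathcal{F}_\delta]:\rho\in\mathcal{S}_\delta\}$ — and, for the strict value, the family indexed by $\mathcal{S}_{\delta^+}$ — is stable under pairwise maximization, i.e. that for any two of its elements the maximum again belongs to it. I will write the argument for $v(\delta)$; the one for $v^+(\delta)$ is obtained verbatim by replacing $\mathcal{S}_\delta$ with $\mathcal{S}_{\delta^+}$ throughout, since the pasting below preserves strict inequalities as well.

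Fix $\rho_1=(H_1,\tau_1)$ and $\rho_2=(H_2,\tau_2)$ in $\mathcal{S}_\delta$ and set $B:=\{E[\xi_{\rho_1}\mid\mathcal{F}_\delta]\geq E[\xi_{\rho_2}\mid\mathcal{F}_\delta]\}\in\mathcal{F}_\delta$. I would paste the two split stopping times along $B$ by putting $\bar\tau:=\tau_1\mathbf{1}_B+\tau_2\mathbf{1}_{B^c}$ and $\bar H:=(H_1\cap B)\cup(H_2\cap B^c)$, and claim that $\rho_3:=(\bar H,\bar\tau)$ is a split stopping time in $\mathcal{S}_\delta$. Since $\mathcal{F}_\delta\subset\mathcal{F}_{\rho_i}\subset\mathcal{F}_{\tau_i}$, the set $B$ lies in $\mathcal{F}_{\tau_1}\cap\mathcal{F}_{\tau_2}$, so $\bar\tau$ is a stopping time and $\bar H\in\mathcal{F}_{\bar\tau}$; the ordering $\rho_3\geq\delta$ then follows from $\tau_i\geq\sigma$ a.s. together with $H_i\cap\{\tau_i=\sigma\}\subset G\cap\{\tau_i=\sigma\}$, exactly as in the computation performed for $\rho_A$ in the proof of Proposition \ref{v v+ admiss}.

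The delicate point, which I expect to be the main obstacle, is to check that $\bar\tau_{\bar H}$ is a predictable stopping time, so that $\rho_3$ is genuinely an element of $\mathcal{S}_0$. Here I would rely on the identity $\bar\tau_{\bar H}=(\tau_1)_{H_1\cap B}\wedge(\tau_2)_{H_2\cap B^c}$, which reduces the question to the predictability of the two restrictions $((\tau_1)_{H_1})_B=(\tau_1)_{H_1\cap B}$ and $((\tau_2)_{H_2})_{B^c}=(\tau_2)_{H_2\cap B^c}$. Since $(\tau_i)_{H_i}$ is predictable by hypothesis, each of these restrictions is predictable as soon as $B\in\mathcal{F}_{((\tau_1)_{H_1})^-}$ and $B^c\in\mathcal{F}_{((\tau_2)_{H_2})^-}$; establishing these two memberships from $B\in\mathcal{F}_\delta\subset\mathcal{F}_{\rho_i}$, via the description $\mathcal{F}_{\rho_i}=\{A\in\mathcal{F}_{\tau_i}:A\cap H_i\in\mathcal{F}_{\tau_i^-}\}$ and the result \cite[Theorem 73, p. 205]{Probabilites et Potentiel1} already used in Proposition \ref{v v+ admiss}, is the one step requiring genuine care with the interplay of $\mathcal{F}_\delta$, $\mathcal{F}_{\rho_i}$ and the strict past $\mathcal{F}_{\tau_i^-}$.

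Granting that $\rho_3\in\mathcal{S}_\delta$, I would finish by computing $\xi_{\rho_3}=\mathbf{1}_B\,\xi_{\rho_1}+\mathbf{1}_{B^c}\,\xi_{\rho_2}$ directly from $\xi_\rho=\xi_{\tau^-}\mathbf{1}_H+\xi_\tau\mathbf{1}_{H^c}$, and then, since $B\in\mathcal{F}_\delta$,
\[
E[\xi_{\rho_3}\mid\mathcal{F}_\delta]=\mathbf{1}_B\,E[\xi_{\rho_1}\mid\mathcal{F}_\delta]+\mathbf{1}_{B^c}\,E[\xi_{\rho_2}\mid\mathcal{F}_\delta]=E[\xi_{\rho_1}\mid\mathcal{F}_\delta]\vee E[\xi_{\rho_2}\mid\mathcal{F}_\delta].
\]
This shows the family is stable under pairwise maximization, and Neveu's lemma then yields a non-decreasing sequence $(E[\xi_{\rho_n}\mid\mathcal{F}_\delta])_n$ converging almost surely up to $v(\delta)$; the identical construction over $\mathcal{S}_{\delta^+}$ gives the statement for $v^+(\delta)$.
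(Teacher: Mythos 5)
Your proposal is correct and follows essentially the same route as the paper: both establish stability under pairwise maximization of the family $\{E[\xi_\rho\mid\mathcal{F}_\delta]:\rho\in\mathcal{S}_\delta\}$ by pasting $\rho_1,\rho_2$ along the set $B$ and then invoke Neveu's lemma on essential suprema, the paper writing the pasted predictable time as $\bar\tau_{\bar H}=(\tau_1)_{H_1}\mathbf{1}_B+(\tau_2)_{H_2}\mathbf{1}_{B^c}$ and asserting its predictability where you reduce it to a minimum of two restrictions. If anything you are more explicit on that delicate point (note only that the membership one actually needs and has is $H_i\cap B\in\mathcal{F}_{\tau_i^-}\subset\mathcal{F}_{((\tau_i)_{H_i})^-}$, which suffices since $(\tau_1)_{H_1\cap B}=((\tau_1)_{H_1})_{H_1\cap B}$, and that one should also record the routine check $\rho_3\le\rho^T$ so that $\rho_3\in\mathcal{S}_0$).
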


   \begin{proof} We give the proof only for $(v(\delta), \delta \in \mathcal{S}_0)$. The same reasoning applies to $(v^+(\delta), \delta \in \mathcal{S}_0)$. We fix $\delta= (G, \sigma) \in \mathcal{S}_0$ and consider the set of random variables $ E[ \xi_\rho \vert \mathcal{F}_\delta]$, where $\rho$ runs through the set of split stopping times $\mathcal{S}_\delta$, one can show that this set is closed under pairwise maximization. Indeed, let $\rho_1=(H_1,\tau_1) \in \mathcal{S}_\delta$ and $ \rho_2=(H_2,\tau_2) \in \mathcal{S}_\delta$, put $A=\lbrace E[\xi_{\rho_1} \vert \mathcal{F}_\delta] \geq E[\xi_{\rho_2} \vert \mathcal{F}_\delta]\rbrace$. One has $A \in \mathcal{F}_\delta$. Put $\tau := \tau_1 \mathbf{1}_A + \tau_2 \mathbf{1}_{A^c}$ and $H := H_1 \cap A \cup H_2 \cap A^c$. Then $H\in \mathcal{F}_\tau$, $\tau \in \mathcal{T}_\sigma$ and $\tau_H= \tau_{1_{H_1}} \mathbf{1}_A + \tau_{2_{H_2}} \mathbf{1}_{A^c}$ is predictable. Moreover,
\begin{align*}
H \cap \lbrace \tau = \sigma \rbrace &= \big[ H_1 \cap \lbrace \tau_1 = \sigma \rbrace \cap A \big] \cup  \big[ H_2 \cap \lbrace \tau_2 = \sigma \rbrace \cap A^c \big]\\
&\subset  \big[ G \cap \lbrace \tau = \sigma \rbrace \cap A \big] \cup \big[  G \cap \lbrace \tau = \sigma \rbrace \cap A^c \big] \\
&= G \cap \lbrace \tau = \sigma \rbrace,
\end{align*}
and 
\begin{align*}
H^T \cap \lbrace \tau = T \rbrace &= \big[ H^T \cap \lbrace \tau_1 = T \rbrace \cap A \big] \cup  \big[ H^T \cap \lbrace \tau_2 = T \rbrace \cap A^c \big]\\
&\subset  \big[ H_1 \cap \lbrace \tau = T \rbrace \cap A \big] \cup \big[  H_2 \cap \lbrace \tau = T \rbrace \cap A^c \big] \\
&= H \cap \lbrace \tau = T \rbrace.
\end{align*}
It follows that $\rho := (H, \tau)$ is a split stopping time that belongs to $\mathcal{S}_\delta$. Therefore,
\begin{align*}
E[\xi_\rho \vert \mathcal{F}_\delta] &= E[\xi_{\tau-} \mathbf{1}_H + \xi_\tau \mathbf{1}_{H^c} \vert \mathcal{F}_\delta] \\
&= E[\xi_{{\tau_1}^-}  \mathbf{1}_{H \cap A} + \xi_{{\tau_2}^-}  \mathbf{1}_{H \cap A^c} + \xi_{\tau_1}  \mathbf{1}_{H^c \cap A} + \xi_{\tau_2}  \mathbf{1}_{H^c \cap A^c} \vert \mathcal{F}_\delta ] \\
&= E[\xi_{\tau_1^-} \mathbf{1}_{H_1} + \xi_{\tau_1} \mathbf{1}_{H_1^c} \vert \mathcal{F}_\delta]\,\, \mathbf{1}_A + E[\xi_{\tau_2^-} \mathbf{1}_{H_2} + \xi_{\tau_2} \mathbf{1}_{H_2^c} \vert \mathcal{F}_\delta]\,\,  \mathbf{1}_{A^c}\\
&= E[\xi_{\rho_1} \vert \mathcal{F}_\delta] \,\, \mathbf{1}_A + E[\xi_{\rho_2} \vert \mathcal{F}_\delta]\,\,  \mathbf{1}_{A^c} = E[\xi_{\rho_1} \vert \mathcal{F}_\delta] \vee E[\xi_{\rho_2} \vert \mathcal{F}_\delta],
\end{align*}
which implies the stability under pairwise maximization. The proof is straightforward using a classical result on essential suprema (cf. Neveu \cite{Neveu}).
   \end{proof}

   \begin{definition}(\textbf{Supermartingale family indexed by s.s.t.}) \label{s.m.f}
   An admissible family $U:=(U(\delta), \delta \in \mathcal{S}_0)$ is said to be a supermartingale family (resp. a martingale family) if for all $\delta, \delta' \in \mathcal{S}_0$, such that $\delta \geq \delta'$,
   $$E[U(\delta) \vert \mathcal{F}_{\delta'}] \leq U(\delta') \quad \text{a.s. (resp.} \quad E[U(\delta) \vert \mathcal{F}_{\delta'}] = U(\delta') \quad \text{a.s.)}.$$    
   \end{definition}
   
   The following proposition states that the value function $v$ and the strict value function $v^+$ are both supermartingale families. 
   
   \begin{proposition} \label{snell env family} The admissible families $v =(v(\delta), \delta \in \mathcal{S}_0)$ and $v^+ = (v^+(\delta), \delta \in \mathcal{S}_0)$ are supermartingale families in the sense of Definition \ref{s.m.f}. Moreover, the value family $v =(v(\delta), \delta \in \mathcal{S}_0)$ is characterized as the Snell envelope family over split stopping times associated with $(\xi_\delta, \delta \in \mathcal{S}_0)$, that is, the smallest supermartingale family indexed by split stopping times which is greater a.s. than $(\xi_\delta, \delta \in \mathcal{S}_0)$. 
   \end{proposition}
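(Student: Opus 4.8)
The plan is to treat the three assertions—the supermartingale property of $v$, the supermartingale property of $v^+$, and the minimality characterization—separately, relying on the optimizing sequences furnished by Proposition \ref{opti_seq} together with the order structure of $\mathcal{S}_0$.

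First, to show that $v$ is a supermartingale family I would fix $\delta,\delta'\in\mathcal{S}_0$ with $\delta\geq\delta'$ and take, via Proposition \ref{opti_seq}, an optimizing sequence $(\rho_n)\subset\mathcal{S}_\delta$ with $E[\xi_{\rho_n}\mid\mathcal{F}_\delta]\uparrow v(\delta)$. Applying the conditional monotone convergence theorem and then the tower property (using $\mathcal{F}_{\delta'}\subset\mathcal{F}_\delta$, which follows from $\delta'\leq\delta$), I obtain
\[
E[v(\delta)\mid\mathcal{F}_{\delta'}]=\lim_{n}\uparrow E[\xi_{\rho_n}\mid\mathcal{F}_{\delta'}]\quad\text{a.s.}
\]
The decisive observation is that each $\rho_n\in\mathcal{S}_\delta\subset\mathcal{S}_{\delta'}$: indeed $\rho_n\geq\delta\geq\delta'$, and by transitivity of the order on split stopping times $\rho_n\geq\delta'$, so that $\rho_n\in\mathcal{S}_{\delta'}$. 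Hence $E[\xi_{\rho_n}\mid\mathcal{F}_{\delta'}]\leq v(\delta')$ for every $n$, and passing to the limit gives $E[v(\delta)\mid\mathcal{F}_{\delta'}]\leq v(\delta')$. The identical scheme handles $v^+$, the only change being that the optimizing sequence lies in $\mathcal{S}_{\delta^+}$ and one invokes the inclusion $\mathcal{S}_{\delta^+}\subset\mathcal{S}_{\delta'^+}$, i.e. the fact that $\rho_n>\delta\geq\delta'$ forces $\rho_n>\delta'$.

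For the minimality, I would first record that $v$ dominates $\xi$: reflexivity of the order gives $\delta\in\mathcal{S}_\delta$, so that $v(\delta)\geq E[\xi_\delta\mid\mathcal{F}_\delta]=\xi_\delta$ a.s., the last equality holding because $\xi$ is optional and hence $\xi_\delta$ is $\mathcal{F}_\delta$-measurable. Then, letting $U=(U(\delta),\delta\in\mathcal{S}_0)$ be any supermartingale family with $U\geq\xi$, I would fix $\delta$ and, for each $\rho\in\mathcal{S}_\delta$, combine the supermartingale inequality $E[U(\rho)\mid\mathcal{F}_\delta]\leq U(\delta)$ with $U(\rho)\geq\xi_\rho$ to conclude $E[\xi_\rho\mid\mathcal{F}_\delta]\leq U(\delta)$ a.s. Taking the essential supremum over $\rho\in\mathcal{S}_\delta$ yields $v(\delta)\leq U(\delta)$, which is exactly the asserted minimality; together with the preceding two paragraphs this identifies $v$ as the Snell envelope family over split stopping times.

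The routine ingredients (conditional monotone convergence, the tower property, and the standard essential-supremum manipulations of Neveu \cite{Neveu}) cause no difficulty. The step demanding genuine care is the transitivity of the order and the resulting index-set inclusions $\mathcal{S}_\delta\subset\mathcal{S}_{\delta'}$ and $\mathcal{S}_{\delta^+}\subset\mathcal{S}_{\delta'^+}$ when $\delta\geq\delta'$: since the order couples the time-components with the set-components through conditions on the events $\lbrace\tau=\sigma\rbrace$ and $\lbrace\sigma=T\rbrace$, checking that these inclusions survive the composition of $\geq$ with $>$ requires unwinding the defining set relations—in particular tracking the event $\lbrace\sigma'=T\rbrace$ and the membership $H^T\cap\lbrace\sigma'=T\rbrace\subset G'\cap\lbrace\sigma'=T\rbrace$ inherited from $\delta'\in\mathcal{S}_0$. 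I expect this order-theoretic bookkeeping, rather than the martingale argument itself, to be the main obstacle.
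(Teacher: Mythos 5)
Your proposal is correct and follows essentially the same route as the paper: optimizing sequences from Proposition \ref{opti_seq} plus conditional monotone convergence for the supermartingale property, and the standard domination argument (conditioning the supermartingale inequality of a competitor $\bar v\geq\xi$ and taking the essential supremum) for minimality. The only difference is that you make explicit the order-theoretic inclusions $\mathcal{S}_\delta\subset\mathcal{S}_{\delta'}$ and $\mathcal{S}_{\delta^+}\subset\mathcal{S}_{\delta'^+}$ for $\delta\geq\delta'$, which the paper uses implicitly in the bound $\lim_n E[\xi_{\rho_n}\mid\mathcal{F}_{\delta'}]\leq v(\delta')$; your verification of this transitivity is a welcome, and correct, addition rather than a deviation.
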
 

\begin{proof} 
We only prove the property for $v =(v(\delta), \delta \in \mathcal{S}_0)$. Similar arguments apply for $v^+ = (v^+(\delta), \delta \in \mathcal{S}_0)$. Let $\delta, \delta' \in \mathcal{S}_0$, such that $\delta \geq \delta'$. By Proposition \ref{opti_seq}, there exists a sequence of split stopping times $(\rho_n)_{n\in\mathbb{N}}$ in $\mathcal{S}_\delta$, such that $v(\delta) = \lim\limits_{n\rightarrow \infty} \uparrow E[\xi_{\rho_n} \vert \mathcal{F}_\delta]$ a.s. From this together with the monotone convergence theorem, we get
$$E[v(\delta) \vert \mathcal{F}_{\delta'}]= E[ \lim\limits_{n\rightarrow \infty} \uparrow E[\xi_{\rho_n} \vert \mathcal{F}_\delta]  \vert \mathcal{F}_{\delta'} ]= \lim\limits_{n\rightarrow \infty} \uparrow E[\xi_{\rho_n} \vert \mathcal{F}_{\delta'} ] \leq v(\delta')\quad \text{a.s.,}$$ and thus the supermartingale property of $v$ holds. What is left is to show the second statement. We see at once that $(v(\delta), \delta \in \mathcal{S}_0)$ is a supermartingale family bounding $\xi$ above, i.e. for each $\delta \in \mathcal{S}_0$, $v(\delta) \geq \xi_\delta$ a.s.  Let $(\bar{v}(\delta), \delta \in \mathcal{S}_0)$ be another supermartingale family such that for each $\delta \in \mathcal{S}_0$, $\bar{v}(\delta) \geq \xi_\delta$ a.s. Fix $\delta \in \mathcal{S}_0$. By the supermartingale property of $\bar{v}$, we have for every split stopping time $\rho \in\mathcal{S}_\delta$,
\begin{equation*}
\bar{v}(\delta) \geq E[\bar{v}(\rho) \vert \mathcal{F}_\delta] \geq E[ \xi_\rho \vert \mathcal{F}_\delta] \quad \text{a.s.}
\end{equation*}
Taking the supremum over $\rho$, we obtain $\bar{v}(\delta) \geq v(\delta)$ a.s., and the proposition follows. \\
\end{proof} 
   \begin{proposition} \label{v=xi vee v+}
  For every $\delta \in \mathcal{S}_0$,  $v(\delta) = \xi_\delta \vee v^+(\delta)$ a.s.
   \end{proposition}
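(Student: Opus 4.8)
The plan is to establish the two inequalities separately. The inequality $v(\delta) \geq \xi_\delta \vee v^+(\delta)$ is immediate: taking $\rho = \delta$ (which is admissible since $\delta \geq \delta$) in the family over which the essential supremum defining $v(\delta)$ is taken gives $v(\delta) \geq E[\xi_\delta \mid \mathcal{F}_\delta] = \xi_\delta$ a.s., because $\xi_\delta$ is $\mathcal{F}_\delta$-measurable; and since every $\rho > \delta$ satisfies $\rho \geq \delta$, we have $\mathcal{S}_{\delta^+} \subset \mathcal{S}_\delta$, so the essential supremum defining $v(\delta)$ is taken over a larger set than the one defining $v^+(\delta)$, whence $v(\delta) \geq v^+(\delta)$ a.s. Taking the maximum yields the claim.

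For the reverse inequality it suffices, after taking the essential supremum over $\rho \in \mathcal{S}_\delta$, to fix $\rho = (H,\tau) \in \mathcal{S}_\delta$ and $\delta = (G,\sigma)$ and to prove that $E[\xi_\rho \mid \mathcal{F}_\delta] \leq \xi_\delta \vee v^+(\delta)$ a.s. First I would isolate the set on which $\rho$ coincides with $\delta$, namely
\[
A := \{\tau = \sigma\} \cap (H \cup G^c).
\]
Using the order condition $H \cap \{\tau = \sigma\} \subset G \cap \{\tau = \sigma\}$, a short case check (on $A \cap H$, where both reduce to $\xi_{\sigma^-}$, and on $A \cap H^c = \{\tau=\sigma\} \cap H^c \cap G^c$, where both reduce to $\xi_\sigma$) shows that $\xi_\rho = \xi_\delta$ on $A$. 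One then verifies, exactly as in the proof of Proposition \ref{v v+ admiss} (using that $\sigma_G$ and $\tau_H$ are predictable and the measurability results cited there, e.g.\ Theorem 73 p.~205 of \cite{Probabilites et Potentiel1}), that $A \in \mathcal{F}_\delta$. Since $A \in \mathcal{F}_\delta$ and $\xi_\rho = \xi_\delta$ on $A$, this gives $E[\xi_\rho \mathbf{1}_A \mid \mathcal{F}_\delta] = \xi_\delta \mathbf{1}_A$ a.s.

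On $A^c$ the idea is to replace $\rho$ by a split stopping time strictly after $\delta$. I would set $\tau' := \tau \mathbf{1}_{A^c} + T \mathbf{1}_A$ and $H' := (H \cap A^c) \cup (H^T \cap A)$, so that $\rho' := (H',\tau')$ agrees with $\rho$ on $A^c$ and equals the terminal $\rho^T$ on $A$; a direct computation then gives $\xi_{\rho'} = \xi_\rho$ on $A^c$. As in Proposition \ref{v v+ admiss} one checks that $\rho'$ is a genuine split stopping time (in particular $\tau'_{H'}$ is predictable) and, crucially, that $\rho' \in \mathcal{S}_{\delta^+}$. Granting this, since $A^c \in \mathcal{F}_\delta$ and $\xi_{\rho'}\mathbf{1}_{A^c} = \xi_\rho \mathbf{1}_{A^c}$, we obtain $E[\xi_\rho \mathbf{1}_{A^c} \mid \mathcal{F}_\delta] = \mathbf{1}_{A^c} E[\xi_{\rho'} \mid \mathcal{F}_\delta] \leq \mathbf{1}_{A^c} v^+(\delta)$ a.s. Adding the two contributions gives $E[\xi_\rho \mid \mathcal{F}_\delta] \leq \xi_\delta \mathbf{1}_A + v^+(\delta)\mathbf{1}_{A^c} \leq \xi_\delta \vee v^+(\delta)$ a.s., and taking the essential supremum over $\rho \in \mathcal{S}_\delta$ finishes the proof.

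The step I expect to be the main obstacle is the verification that $\rho' \in \mathcal{S}_{\delta^+}$, i.e.\ $\rho' > \delta$ in the sense of Definition \ref{s.s.t}. On the part of $A^c$ where $\tau > \sigma$ this is clear, but $A^c$ also contains the delicate region $\{\tau = \sigma\} \cap H^c \cap G$, on which the time components coincide while $\rho'$ must still be counted as strictly later than $\delta$: there $\delta$ takes the left value (on $G$) and $\rho'$ the value at $\sigma$, so the strictness is carried by the $H$-component rather than by the time. Checking that the strict order relation is indeed satisfied on this region, together with the $\mathcal{F}_\delta$-measurability of $A$ and the predictability of $\tau'_{H'}$, are the points requiring care; all of them are dispatched with the general-theory-of-processes tools (predictable projections and the section theorems) already employed in Proposition \ref{v v+ admiss}, with the terminal set $\{\sigma = T\}$ handled exactly as there.
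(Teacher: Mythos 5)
Your strategy is essentially the paper's: split $\Omega$ into the set where $\xi_\rho$ coincides with $\xi_\delta$ and its complement, and on the complement replace $\rho$ by an element of $\mathcal{S}_{\delta^+}$ so that the conditional expectation there is dominated by $v^+(\delta)$. Your set $A=\{\tau=\sigma\}\cap(H\cup G^c)$ is an explicit version of the paper's event $\{\rho=\delta\}$, and your $\rho'$ plays the role of the paper's $\gamma=(H,\,\tau\mathbf{1}_{\{\rho>\delta\}}+T\mathbf{1}_{\{\rho=\delta\}})$. The easy inequality and the identification of $\xi_\rho=\xi_\delta$ on $A$ are correct.

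The gap is exactly where you predicted it, and it is not dispatched by the section-theorem and predictability arguments of Proposition \ref{v v+ admiss}: with the order relation introduced after Definition \ref{s.s.t}, $\rho'>\delta$ \emph{requires} $\tau'>\sigma$ a.s.\ on $\{\sigma<T\}$, with no provision for strictness ``carried by the $H$-component''. On $\{\tau=\sigma\}\cap H^c\cap G\cap\{\sigma<T\}\subset A^c$ you have $\tau'=\tau=\sigma<T$, so $\rho'\notin\mathcal{S}_{\delta^+}$ whenever that set has positive probability (take $\delta=(\Omega,\sigma)=\sigma^-$ with $\sigma$ predictable and $\rho=(\emptyset,\sigma)$). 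On that region $\xi_\rho=\xi_\sigma$ while $\xi_\delta=\xi_{\sigma^-}$, and no element of $\mathcal{S}_{\delta^+}$ as defined can reach the value $\xi_\sigma$, so the bound $E[\xi_\rho\mathbf{1}_{A^c}\mid\mathcal{F}_\delta]\le v^+(\delta)\mathbf{1}_{A^c}$ cannot be rescued by a further modification of $\rho$. In fairness, the paper's own proof is no more careful here: it decomposes over the unexplained events $\{\rho=\delta\}$ and $\{\rho>\delta\}$ and stumbles on the same region, and the statement itself appears to require that the strict order be enlarged so that $\tau=\sigma$ is allowed on $G\cap H^c$ (i.e.\ $(\emptyset,\sigma)>(\Omega,\sigma)$). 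So you have located the genuine difficulty, but your assertion that the strict order ``is indeed satisfied on this region'' is false under the definitions as written, and the proof is incomplete without either amending the definition of $\rho>\delta$ or giving a separate argument for $\{\tau=\sigma\}\cap H^c\cap G$.
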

   
   \begin{proof}
  Let $\delta=(G, \sigma)$ be a split stopping time in  $\mathcal{S}_0$. Take $\rho=(H,\tau) \in \mathcal{S}_\delta$, i.e. $\rho \geq \delta$.
 We first show that 
    \begin{equation}
    E[\xi_\rho \vert \mathcal{F}_\delta] \leq \xi_\delta \vee v^+(\delta) \quad \text{a.s.}
    \end{equation}
    We set $\gamma=( H,\bar{\tau}):=(H, \tau \mathbf{1}_{\lbrace \rho> \delta \rbrace} + T \mathbf{1}_{\lbrace \rho= \delta \rbrace})$. One can show that $\gamma$ belongs to $\mathcal{S}_{\delta^+}$. Indeed, since $\rho \geq \delta$, $H \in \mathcal{F}_\tau \subset \mathcal{F}_{\bar{\tau}}$ and $\lbrace \rho= \delta \rbrace \in \mathcal{F}_{\tau^-}$, $\bar{\tau}$ is a stopping time and $\bar{\tau}_H= \tau_H \mathbf{1}_{\lbrace \rho> \delta \rbrace} + T \mathbf{1}_{\lbrace \rho= \delta \rbrace}$ is a predictable stopping time in $\mathcal{T}^p_0$. Moreover, $\bar{\tau} \in \mathcal{T}_{\sigma^+}$ and hence $\gamma  \in \mathcal{S}_{\delta^+}$. Then,
    \begin{equation}\label{Eq0}
    E[\xi_\rho \vert \mathcal{F}_\delta] \mathbf{1}_{\lbrace \rho>\delta \rbrace} = E[\xi_\gamma \vert \mathcal{F}_\delta] \mathbf{1}_{\lbrace \rho> \delta \rbrace}\leq v^+(\delta) _{\lbrace \rho \geq  \delta \rbrace} \quad \text{a.s.}
    \end{equation}
   Therefore, 
   $$E[\xi_\rho \vert \mathcal{F}_\delta]= \xi_\delta \mathbf{1}_{\lbrace \rho = \delta \rbrace} +  E[\xi_\rho \vert \mathcal{F}_\delta] \mathbf{1}_{\lbrace \rho> \delta \rbrace} \leq \xi_\delta \mathbf{1}_{\lbrace \rho = \delta \rbrace} +  v^+(\delta) _{\lbrace \rho \geq  \delta \rbrace} \leq  \xi_\delta \vee v^+(\delta) \quad \text{a.s.} $$
   By taking the essential supremum over $\rho \in \mathcal{S}_\delta$, we get $v(\delta) \leq \xi_\delta \vee v^+(\delta)$ a.s.
   The other inequality follows immediately from the fact that $v(\delta) \geq v^+(\delta)$ a.s. and $v(\delta) \geq \xi_\delta$ a.s., which completes the proof.\\ 
   \end{proof}
   
   We now give a crucial property of regularity for the strict value function family, that is, the right continuity along
split stopping times in expectation (in short RCE). This property is well-known in the case of families indexed by stopping times and it is pretty near to Proposition 1.12. in Kobylanski and Quenez (cf. \cite{KobylanskiQuenez}).
   
   \begin{definition}(\textbf{Right continuous family along s.s.t. in expectation}) \label{DefRCE}
   An admissible family $U= (U(\delta), \delta \in \mathcal{S}_0)$ is said to be right continuous along
split stopping times in expectation (RCE) if for every $\delta=(G,\sigma) \in \mathcal{S}_0$ and for any sequence of split stopping times $(\delta_n)_{n\in \mathbb{N}}=(G^n,\sigma_n)_{n\in \mathbb{N}}\in \mathcal{S}_0$ such that $\delta_n \downarrow \delta$ in the sense; $\sigma_n \downarrow \sigma$ and $G^n \downarrow G$, one has $E[U(\delta)] = \lim\limits_{n \rightarrow \infty} E[U(\delta_n)]$.
   \end{definition}
   
   \begin{proposition}(RCE property for $v^+$ along s.s.t.) \label{v+RCE} Let $\xi \in \mathbf{S}^2$ be a non-negative ladlag process. The associated strict value function family $v^+ = (v^+(\delta), \delta \in \mathcal{S}_0)$ is RCE along split stopping times.
   \end{proposition}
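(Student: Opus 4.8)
The strategy is the split-stopping-time analogue of the classical right-continuity-in-expectation argument (cf. Proposition 1.12 in \cite{KobylanskiQuenez}): prove $\limsup_n E[v^+(\delta_n)]\le E[v^+(\delta)]$ via the supermartingale property and the reverse inequality via a lifting construction combined with dominated convergence. First I would pin down the order relation hidden in $\delta_n\downarrow\delta$: since $\sigma_n\downarrow\sigma$ one has $\sigma_n\ge\sigma$, and together with the behaviour of the sets $G^n$ and the membership $\delta_n\in\mathcal{S}_0$ this gives $\delta_n\ge\delta$ in the order of Definition \ref{s.s.t}, monotonicity $\delta_{n+1}\le\delta_n$ being obtained likewise. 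Granting this, Proposition \ref{snell env family} applied to $\delta_n\ge\delta$ yields $E[v^+(\delta_n)]\le E[v^+(\delta)]$ for every $n$, and applied to $\delta_{n+1}\le\delta_n$ shows that $n\mapsto E[v^+(\delta_n)]$ is nondecreasing; hence $\lim_n E[v^+(\delta_n)]$ exists and does not exceed $E[v^+(\delta)]$.

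For the reverse inequality I would first record, using the optimizing sequence of Proposition \ref{opti_seq} and the monotone convergence theorem in (\ref{v+}), the identity $E[v^+(\delta)\mathbf{1}_{\{\sigma<T\}}]=\sup_{\rho\in\mathcal{S}_{\delta^+}}E[\xi_\rho\mathbf{1}_{\{\sigma<T\}}]$, arguing on $\{\sigma<T\}$ and on the terminal set $\{\sigma=T\}$ separately. It then suffices to show, for each fixed $\rho=(H,\tau)\in\mathcal{S}_{\delta^+}$, that $\liminf_n E[v^+(\delta_n)\mathbf{1}_{\{\sigma<T\}}]\ge E[\xi_\rho\mathbf{1}_{\{\sigma<T\}}]$. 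Since $\rho>\delta$ forces $\tau>\sigma$ a.s. on $\{\sigma<T\}$, the sets $A_n:=\{\tau>\sigma_n\}\in\mathcal{F}_{\sigma_n}$ increase to $\{\tau>\sigma\}=\{\sigma<T\}$ as $\sigma_n\downarrow\sigma$.

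Next I would transport $\rho$ strictly above $\delta_n$ by setting $\rho_n:=(H_n,\bar\tau_n)$ with $\bar\tau_n:=\tau\mathbf{1}_{A_n}+T\mathbf{1}_{A_n^c}$ and $H_n:=(H\cap A_n)\cup(H^T\cap A_n^c)$, in the spirit of the constructions in the proofs of Propositions \ref{v v+ admiss} and \ref{opti_seq}. One verifies that $\bar\tau_n$ is a stopping time with $(\bar\tau_n)_{H_n}$ predictable, that $\bar\tau_n\in\mathcal{T}_{\sigma_n^+}$, and that the inclusions $H_n\cap\{\bar\tau_n=\sigma_n\}\subset G^n\cap\{\bar\tau_n=\sigma_n\}$ and $H^T\cap\{\bar\tau_n=T\}\subset H_n\cap\{\bar\tau_n=T\}$ hold, so that $\rho_n\in\mathcal{S}_{\delta_n^+}$; by construction $\xi_{\rho_n}=\xi_\rho$ on $A_n$ and $\xi_{\rho_n}=\xi_{\rho^T}$ on $A_n^c$. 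From the definition (\ref{v+}) of $v^+(\delta_n)$ and $\{\sigma<T\}\in\mathcal{F}_{\delta_n}$ we then get $E[v^+(\delta_n)\mathbf{1}_{\{\sigma<T\}}]\ge E[\xi_{\rho_n}\mathbf{1}_{\{\sigma<T\}}]=E[\xi_\rho\mathbf{1}_{A_n}]+E[\xi_{\rho^T}\mathbf{1}_{\{\sigma<T\}\setminus A_n}]$.

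Finally, since $\xi\in\mathbf{S}^2$, dominated convergence with $A_n\uparrow\{\sigma<T\}$ sends the right-hand side to $E[\xi_\rho\mathbf{1}_{\{\sigma<T\}}]$, which gives the desired lower bound; taking the supremum over $\rho$ and adding the terminal-set contribution then closes the proof together with the easy direction. The main obstacle is precisely this middle step: producing $\rho_n\in\mathcal{S}_{\delta_n^+}$ that coincides with $\rho$ on $A_n$ while respecting the split-stopping-time structure — in particular the predictability of $(\bar\tau_n)_{H_n}$ and the two set-inclusions above — together with the careful treatment of the terminal set $\{\sigma=T\}$, where $\sigma_n=\sigma=T$ and the two sides must be matched through $G^n\downarrow G$ by a separate dominated-convergence argument.
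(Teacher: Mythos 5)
Your proposal is correct and follows essentially the same route as the paper's proof: the key step in both is the lifting of a fixed $\rho=(H,\tau)\in\mathcal{S}_{\delta^+}$ to an element of $\mathcal{S}_{\delta_n^+}$ via $\bar\tau_n=\tau\mathbf{1}_{\{\tau>\sigma_n\}}+T\mathbf{1}_{\{\tau\le\sigma_n\}}$ and $H_n=(H\cap\{\tau>\sigma_n\})\cup(H^T\cap\{\tau\le\sigma_n\})$, combined with the supermartingale property for the easy inequality and a limit argument as $\{\tau>\sigma_n\}\uparrow\{\tau>\sigma\}$. The paper phrases this as a proof by contradiction with an $\epsilon$-optimal $\rho'$ and treats the terminal set $\{\sigma=T\}$ by an explicit three-piece construction, while you argue directly via $\liminf$/$\limsup$; this is only a difference of presentation, not of substance.
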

  
  \begin{proof}
   First, note that for each $\delta=(G,\sigma) \in \mathcal{S}_0$, $E[v^+(\delta)]<\infty$. Indeed, for every $\rho=(H,\tau) \in \mathcal{S}_\delta$ we have $\xi_\rho:= \xi_{\tau^-} \mathbf{1}_H + \xi_\tau \mathbf{1}_{H^c}$. By the left-continuity of the process $(\xi_{t^-})$, we get 
  \begin{align} \label{Eq5}  
  \sup\limits_{t \in[0,T]} \vert \xi_{t^-} \vert^2= \sup\limits_{t \in \mathbb{Q}\cap [0,T] } \vert \xi_{t^-} \vert^2 \leq \esssup\limits_{\theta \in \mathcal{T}_0} \vert \xi_\theta \vert^2 \quad \text{a.s.}
    \end{align}
  Using this together with the existence of an optimizing sequence of split stopping times for $v^+(\delta)$ (see Proposition \ref{opti_seq}), it follows that 
  \begin{align}
E[v^+(\delta)]= \sup\limits_{\rho \in \mathcal{S}_{\delta^+}} E[\xi_\rho] &\leq E[\esssup\limits_{\rho \in \mathcal{S}_{\delta^+}} \vert \xi_\rho \vert^2]^{\frac{1}{2}} \label{E(v+)}  \\
 &\leq  E\big[\esssup\limits_{\theta \in \mathcal{T}_0} \vert \xi_{\theta^-} \vert^2 \mathbf{1}_H  + \esssup\limits_{\theta \in \mathcal{T}_0} \vert \xi_{\theta} \vert^2 \mathbf{1}_{H^c} \big]^{\frac{1}{2}} \nonumber \\
&\leq  E\big[\esssup\limits_{\theta \in \mathcal{T}_0} \vert \xi_{\theta} \vert^2  \big]^{\frac{1}{2}}<\infty. \nonumber
  \end{align}
  It is also easily seen that the function $\delta \rightarrow E[v^+(\delta)]$ is a non-increasing function of split stopping times, since $v^+$ is a supermartingal family. Suppose, contrary to our claim, that the family $(v^+(\delta), \delta \in \mathcal{S}_0)$ is not RCE at $\delta=(G, \sigma) \in \mathcal{S}_0$. In other words, there exists a constant $\epsilon>0$, and a sequence of split stopping times $(\delta_n)_{n\in \mathbb{N}}=(G^n,\sigma_n)_{n\in \mathbb{N}}\in \mathcal{S}_0$ such that $\delta_n \downarrow \delta$ in the sense of Definition \ref{DefRCE} and 
  \begin{equation*}
  \lim\limits_{n \rightarrow \infty} \uparrow E[v^+(\delta_n)] + \epsilon \leq E[v^+(\rho)].
  \end{equation*}
  From Eq. (\ref{E(v+)}), one can easily see that there exists $\rho' \in \mathcal{S}_{\delta^+}$, such that
  \begin{equation} \label{Eq}
   \lim\limits_{n \rightarrow \infty} \uparrow E[v^+(\delta_n)] + \frac{\epsilon}{2} \leq E[\xi_{\rho'}].
  \end{equation}
  In order to find a contradiction, suppose at first that $\sigma < T$ a.s. Therefore, $\rho'=(H',\tau') \in \mathcal{S}_{\delta^+}$  gives that $\tau'> \sigma$ a.s. and we have 
 \begin{align*}
  \lbrace \rho' > \delta \rbrace &= \Big\{ \omega: \tau'(\omega)> \sigma(\omega), H' \cap \lbrace \tau'(\omega)=\sigma(\omega)\rbrace \subset G \cap \lbrace \tau'(\omega)= \sigma(\omega) \rbrace \Big\}\\
 &= \bigcup_{n\in\mathbb{N}} \uparrow \lbrace \tau' > \sigma_n\rbrace 
 \end{align*} 
 Thus, $E[\xi_\rho']= \lim\limits_{n \rightarrow \infty} \uparrow E[\xi_{\rho'} \mathbf{1}_{\tau' >\sigma_n}]$. Using Eq. (\ref{Eq}), one can see that there exists an $n_0$ such that 
 \begin{align*}
 \lim\limits_{n \rightarrow \infty} \uparrow E[v^+(\delta_n)] + \frac{\epsilon}{4} \leq E[\xi_{\rho'} \mathbf{1}_{\tau' >\sigma_{n_0}}].
 \end{align*}
We set $\bar{\rho}=(\bar{H}, \bar{\tau})$, where $\bar{H}:= \big[ H' \cap \lbrace \tau' > \sigma_{n_0} \rbrace \big] \cup  \big[ H^T \cap \lbrace \tau' \leq \sigma_{n_0} \rbrace \big]$ and $\bar{\tau}:= \tau' \mathbf{1}_{\tau'> \sigma_{n_0}} + T \mathbf{1}_{\tau' \leq \sigma_{n_0}}$. One has $\bar{\tau} > \sigma_{n_0}$ a.s. on $\lbrace \sigma_{n_0} < T\rbrace$, 
$\bar{\tau} = T$ a.s. on $\lbrace \sigma_{n_0} = T\rbrace$ and 
\begin{align*}
\bar{H} \cap \lbrace \bar{\tau}= \sigma_{n_0} \rbrace &= \big[ H' \cap \lbrace \tau' > \sigma_{n_0} \rbrace \cap \lbrace \tau'= \sigma_{n_0} \rbrace \big] \cup  \big[ H^T \cap \lbrace \tau' \leq \sigma_{n_0} \rbrace \cap \lbrace T= \sigma_{n_0} \rbrace \big]\\
&= \emptyset \cup \big[ H^T \cap \lbrace \tau' \leq \sigma_{n_0} \rbrace \cap \lbrace T= \sigma_{n_0} \rbrace \big]\\
&\subset \big[ G^{n_0} \cap \lbrace \tau' > \sigma_{n_0} \rbrace \cap \lbrace \bar{\tau}= \sigma_{n_0} \rbrace \big] \cup  \big[ G^{n_0} \cap \lbrace \tau' \leq \sigma_{n_0} \rbrace \cap \lbrace \bar{\tau}= \sigma_{n_0} \rbrace \big]\\
&= G^{n_0} \cap \lbrace \bar{\tau}= \sigma_{n_0} \rbrace.
\end{align*}
 We have thus proved that $\bar{\rho} \in \mathcal{S}_{\delta_{n_0}^+}$. Therefore, the positivity of $\xi$ yields
$$ E[v^+(\delta_{n_0})] + \frac{\epsilon}{4} \leq  E[\xi_{\rho'} \mathbf{1}_{\tau' >\sigma_{n_0}}]\leq E[\xi_{\bar{\rho}}] \leq E[v^+(\delta_{n_0})],$$
  which is impossible.\\
  To study the general case, take $\sigma\in \mathcal{T}_0$. Since $\rho' \in \mathcal{S}_{\delta^+}$, one has $\rho'=\rho^T$ on $\lbrace \delta = \rho^T \rbrace$ and $\tau' > \sigma$ a.s. on $\lbrace \delta < \rho^T \rbrace$. Thus, $E[\xi_{\rho'}]= E[\xi_{\rho'} \mathbf{1}_{\delta <\rho^T}] + E[\xi_{\rho^T} \mathbf{1}_{\delta =\rho^T}]$ and  
  \begin{align*}
  E[\xi_{\rho'} \mathbf{1}_{\delta <\rho^T}]= \lim\limits_{n \rightarrow \infty} \uparrow E[\xi_{\rho'} \mathbf{1}_{\lbrace \delta <\rho^T \rbrace \cap \lbrace \tau' > \sigma_n \rbrace  }].
  \end{align*}
  From this and Eq. (\ref{Eq}), there exists an $n_0$ such that 
  \begin{align} \label{Eq1}
  \lim\limits_{n \rightarrow \infty} \uparrow E[v^+(\delta_n)] + \frac{\epsilon}{4} \leq E[\xi_{\rho'} \mathbf{1}_{\lbrace \delta <\rho^T \rbrace \cap \lbrace \tau' > \sigma_{n_0} \rbrace}] +  E[\xi_{\rho^T} \mathbf{1}_{\delta =\rho^T}].
  \end{align}
  Consider $\bar{\rho}:=(\bar{H}, \bar{\tau})$, where
  \begin{align*}
  \bar{H} &= \big[H' \cap \lbrace \tau'> \sigma_{n_0} \rbrace \cap \lbrace \sigma <T \rbrace  \big] \cup \big[H^T \cap \lbrace \tau'\leq \sigma_{n_0} \rbrace \cap \lbrace \sigma <T \rbrace \big] \cup \big[ H^T \cap \lbrace \sigma = T \rbrace \big], \\
  \bar{\tau} &= \tau' \mathbf{1}_{\lbrace \tau'> \sigma_{n_0} \rbrace \cap \lbrace \sigma <T \rbrace} + T \mathbf{1}_{\lbrace \tau'\leq \sigma_{n_0} \rbrace \cap \lbrace \sigma <T \rbrace} + T \mathbf{1}_{\lbrace \sigma = T \rbrace}.
  \end{align*}
  By similar arguments as in the first case, one can check that $\bar{\rho}$ is a split stopping time in $ \mathcal{S}_{\delta_{{n_0}}^+}$. Consequently,
  \begin{align*}
  E[\xi_{\rho'} \mathbf{1}_{\lbrace \delta <\rho^T \rbrace \cap \lbrace \tau' > \sigma_{n_0} \rbrace}] +  E[\xi_{\rho^T} \mathbf{1}_{\delta =\rho^T}] \leq E[\xi_{\bar{\rho}}] \leq E[v^+(\delta_{n_0})]. 
  \end{align*}
  From this and Eq. (\ref{Eq1}), we derive $ E[v^+(\delta_{n_0})] + \frac{\epsilon}{4} \leq \lim\limits_{n \rightarrow \infty} \uparrow E[v^+(\delta_n)] + \frac{\epsilon}{4} \leq  E[v^+(\delta_{n_0})]$, a contradiction. The proof of the proposition is thus complete.\\
  
  % $\bar{H}:= \big[H' \cap \lbrace \tau'> \sigma_{n_0} \rbrace \cap \lbrace \sigma <T \rbrace  \big] \cup \big[H^T \cap \lbrace \tau'\leq \sigma_{n_0} \rbrace \cap \lbrace \sigma <T \rbrace \big] \cup \big[ H^T \cap \lbrace \sigma = T \rbrace \big]$ and $\bar{\tau}:= \tau' \mathbf{1}_{\lbrace \tau'> \sigma_{n_0} \rbrace \cap \lbrace \sigma <T \rbrace} + T \mathbf{1}_{\lbrace \tau'\leq \sigma_{n_0} \rbrace \cap \lbrace \sigma <T \rbrace} + T \mathbf{1}_{\lbrace \sigma = T \rbrace}$.
  \end{proof}
   The following Lemma holds:
	\begin{lemma}
	Let $U=(U(\rho), \rho \in \mathcal{S}_0)$ be a RCE family along split stopping times. Then, for every $\delta=(G,\sigma) \in \mathcal{S}_0$ and $A\in \mathcal{F}_\delta$, the family $(U(\rho) \mathbf{1}_A, \rho \in \mathcal{S}_\delta)$ is RCE.
	\end{lemma}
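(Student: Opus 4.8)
The plan is to deduce the $\delta$-relative RCE property of $(U(\rho)\mathbf{1}_A,\rho\in\mathcal{S}_\delta)$ from the RCE property of the full family $U$ on $\mathcal{S}_0$ by a pasting argument: on $A^c$ we simply replace any competing split stopping time by the terminal one $\rho^T=(H^T,T)$. So I would fix $\delta'=(G',\sigma')\in\mathcal{S}_\delta$ together with a sequence $\delta'_n=(G'_n,\sigma'_n)\in\mathcal{S}_\delta$ such that $\delta'_n\downarrow\delta'$, i.e.\ $\sigma'_n\downarrow\sigma'$ and $G'_n\downarrow G'$; the goal is to establish $E[U(\delta')\mathbf{1}_A]=\lim_{n\to\infty}E[U(\delta'_n)\mathbf{1}_A]$. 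Since $\delta\leq\delta'\leq\delta'_n$, one has $A\in\mathcal{F}_\delta\subset\mathcal{F}_{\delta'}\subset\mathcal{F}_{\delta'_n}$, in particular $A\in\mathcal{F}_{\sigma'}\cap\mathcal{F}_{\sigma'_n}$, which is what makes the pasting below legitimate.

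First I would set $\delta'_A:=(\bar G',\bar\sigma')$ with $\bar\sigma':=\sigma'\mathbf{1}_A+T\mathbf{1}_{A^c}$ and $\bar G':=(G'\cap A)\cup(H^T\cap A^c)$, and similarly $(\delta'_n)_A:=(\bar G'_n,\bar\sigma'_n)$ with $\bar\sigma'_n:=\sigma'_n\mathbf{1}_A+T\mathbf{1}_{A^c}$ and $\bar G'_n:=(G'_n\cap A)\cup(H^T\cap A^c)$. Arguing exactly as in the proof of Proposition~\ref{v v+ admiss}, $\bar\sigma'$ is a stopping time in $\mathcal{T}_0$, $\bar G'\in\mathcal{F}_{\bar\sigma'}$, and $\bar\sigma'_{\bar G'}=\sigma'_{G'}\mathbf{1}_A+T\mathbf{1}_{A^c}$ is predictable, so $\delta'_A$ is a split stopping time; verifying $\bar\sigma'\leq T$ a.s.\ and $H^T\cap\{\bar\sigma'=T\}\subset\bar G'\cap\{\bar\sigma'=T\}$ (the latter inclusion using $\delta'\leq\rho^T$ on $A$) shows $\delta'_A\in\mathcal{S}_0$, and the identical computation gives $(\delta'_n)_A\in\mathcal{S}_0$.

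The two key observations are then the following. On $A$ one has $\bar\sigma'=\sigma'$ and $\mathbf{1}_{\bar G'}=\mathbf{1}_{G'}$, whereas on $A^c$ one has $\bar\sigma'=T$ and $\mathbf{1}_{\bar G'}=\mathbf{1}_{H^T}$; hence $A\subset\{\delta'_A=\delta'\}$ and $A^c\subset\{\delta'_A=\rho^T\}$, so by the admissibility of $U$ (property~\ref{prop2}) we obtain $U(\delta'_A)=U(\delta')\mathbf{1}_A+U(\rho^T)\mathbf{1}_{A^c}$ a.s., and likewise $U((\delta'_n)_A)=U(\delta'_n)\mathbf{1}_A+U(\rho^T)\mathbf{1}_{A^c}$ a.s. Moreover, from $\sigma'_n\downarrow\sigma'$ and $G'_n\downarrow G'$ one checks directly that $\bar\sigma'_n\downarrow\bar\sigma'$ and $\bar G'_n\downarrow\bar G'$, i.e.\ $(\delta'_n)_A\downarrow\delta'_A$ in the sense of Definition~\ref{DefRCE}. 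Applying the RCE property of $U$ to this convergence yields $E[U(\delta'_A)]=\lim_{n\to\infty}E[U((\delta'_n)_A)]$; substituting the two decompositions and cancelling the common finite term $E[U(\rho^T)\mathbf{1}_{A^c}]$ gives $E[U(\delta')\mathbf{1}_A]=\lim_{n\to\infty}E[U(\delta'_n)\mathbf{1}_A]$, which is precisely the RCE property of $(U(\rho)\mathbf{1}_A,\rho\in\mathcal{S}_\delta)$.

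The main obstacle is purely the set-theoretic bookkeeping: confirming that the pasted objects $\delta'_A$ and $(\delta'_n)_A$ are genuine split stopping times in $\mathcal{S}_0$, and that the pasting is compatible with the downward convergence of \emph{both} $\sigma'_n$ and $G'_n$. These verifications run parallel to those already carried out in Proposition~\ref{v v+ admiss}, so no essentially new difficulty appears; the only analytic input is that $E[U(\rho^T)\mathbf{1}_{A^c}]$ is finite (as $U(\rho^T)$ is integrable), which is exactly what permits the cancellation in the last step.
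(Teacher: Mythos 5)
Your proposal is correct and follows essentially the same route as the paper: paste each competing split stopping time with the terminal one $\rho^T$ on $A^c$, observe that the pasted sequence still decreases to the pasted limit, apply the RCE property of the full family $U$, and cancel the common term $E[U(\rho^T)\mathbf{1}_{A^c}]$. Your write-up is in fact more careful than the paper's (which asserts the needed verifications without detail), but no new idea is involved.
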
   
   
   \begin{proof}
Let $\delta=(G,\sigma) \in \mathcal{S}_0$. Let $(\rho_n)_{n\in \mathbb{N}}= (H^n, \tau_n)_{n\in \mathbb{N}}$ be a sequence of split stopping times such that $\rho_n \downarrow \rho$ in the sense of Definition \ref{DefRCE}. For each $n\in \mathbb{N}$, put $\bar{\rho}_n:=(H^n \cap A \cup H^T \cap A^c, \tau_n \mathbf{1}_A + T \mathbf{1}_{A^c})$, and $\bar{\rho}:= (H \cap A \cup H^T \cap A^c, \tau \mathbf{1}_A + T \mathbf{1}_{A^c})$.  Clearly, we have $\bar{\rho}_n \downarrow \bar{\rho}$.  From the RCE property of $(U(\rho), \rho \in \mathcal{S}_0)$, we get $\lim\limits_{n \rightarrow \infty} E[U(\bar{\rho}_n)]= E[U(\bar{\rho})]$, consequently $\lim\limits_{n \rightarrow \infty} E[U(\rho_n) \mathbf{1}_A ]= E[U(\bar{\rho}) \mathbf{1}_A]$. \\
  \end{proof}
   
   Let us now state the following lemma that will be used in the sequel. 
   
   \begin{lemma} \label{lemma}
   Let $\xi$ be a ladlag process in  $\mathbf{S}^{2}$. For each $\delta=(G,\sigma), \rho=(H,\tau) \in \mathcal{S}_0$, we have 
   \begin{align*}
   E[v(\rho) \vert \mathcal{F}_\delta] \leq v^+(\delta) \quad \text{a.s. on} \,\, \lbrace \rho>\delta \rbrace. 
   \end{align*}
   \end{lemma}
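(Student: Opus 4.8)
The plan is to reduce the statement to a comparison of ordinary expectations tested against $\mathcal{F}_\delta$-measurable sets, and to transport an optimizing sequence for $v(\rho)$ down to $\delta$ by truncating it outside the event $\{\rho > \delta\}$. Throughout write $B := \{\rho > \delta\}$ for the (pointwise) event on which $\rho$ strictly dominates $\delta$. The first task, and the one I expect to be the most delicate, is to record the measurability facts $B \in \mathcal{F}_\delta \cap \mathcal{F}_\rho$ and, more importantly, that $A \cap B \in \mathcal{F}_\rho$ for every $A \in \mathcal{F}_\delta$ — the split-time analogue of the classical fact that $A \in \mathcal{F}_S$ implies $A \cap \{S \le U\} \in \mathcal{F}_U$ for stopping times $S,U$. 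These follow from the same set manipulations on $\{\tau = \sigma\}$, $\{\tau > \sigma\}$ and the traces of $H,G$ already exploited in the proofs of Proposition \ref{opti_seq} and Proposition \ref{v+RCE}, together with the inclusion $\mathcal{F}_\delta \subset \mathcal{F}_\rho$ holding ``on $\{\rho > \delta\}$''.

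Next I would fix, via Proposition \ref{opti_seq} applied at $\rho$, an optimizing sequence $(\gamma_n)_n = ((K_n,\nu_n))_n \subset \mathcal{S}_\rho$ with $E[\xi_{\gamma_n} \vert \mathcal{F}_\rho] \uparrow v(\rho)$ a.s. For each $n$ I truncate $\gamma_n$ outside $B$ by setting $\hat\gamma_n := (\hat K_n, \hat\nu_n)$ with $\hat\nu_n := \nu_n \mathbf{1}_B + T \mathbf{1}_{B^c}$ and $\hat K_n := (K_n \cap B) \cup (H^T \cap B^c)$. Since $\gamma_n \geq \rho$ gives $\nu_n \geq \tau$ and $B \in \mathcal{F}_\rho \subset \mathcal{F}_{\nu_n}$, the random time $\hat\nu_n$ is a stopping time; and on $B$ one has $\hat\gamma_n = \gamma_n \geq \rho > \delta$ while on $B^c$ it coincides with the terminal time, so a verification entirely parallel to the one carried out for $\bar\rho$ in the proof of Proposition \ref{v+RCE} shows $\hat\gamma_n \in \mathcal{S}_{\delta^+}$. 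Consequently $E[\xi_{\hat\gamma_n} \vert \mathcal{F}_\delta] \le v^+(\delta)$ a.s. by the definition \eqref{v+} of $v^+$; and since $\xi_{\hat\gamma_n} = \xi_{\gamma_n}$ on $B$ and $B \in \mathcal{F}_\delta$, this localizes to $\mathbf{1}_B\, E[\xi_{\gamma_n} \vert \mathcal{F}_\delta] \le \mathbf{1}_B\, v^+(\delta)$ a.s.

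Finally I would pass to the limit by testing against an arbitrary $A \in \mathcal{F}_\delta$. Using $A \cap B \in \mathcal{F}_\rho$, the tower property and the monotone convergence theorem give
\[
E[v(\rho)\mathbf{1}_{A \cap B}] = \lim_{n} \uparrow E\big[E[\xi_{\gamma_n}\vert \mathcal{F}_\rho]\mathbf{1}_{A\cap B}\big] = \lim_{n}\uparrow E[\xi_{\gamma_n}\mathbf{1}_{A\cap B}],
\]
while $A \cap B \in \mathcal{F}_\delta$ together with the bound from the previous paragraph yields $E[\xi_{\gamma_n}\mathbf{1}_{A\cap B}] = E\big[E[\xi_{\gamma_n}\vert\mathcal{F}_\delta]\mathbf{1}_{A\cap B}\big] \le E[v^+(\delta)\mathbf{1}_{A\cap B}]$. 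Hence $E[v(\rho)\mathbf{1}_{A\cap B}] \le E[v^+(\delta)\mathbf{1}_{A\cap B}]$ for every $A \in \mathcal{F}_\delta$; since $B \in \mathcal{F}_\delta$ and both $E[v(\rho)\vert\mathcal{F}_\delta]$ and $v^+(\delta)$ are $\mathcal{F}_\delta$-measurable, this is exactly $E[v(\rho)\vert\mathcal{F}_\delta] \le v^+(\delta)$ a.s. on $B = \{\rho > \delta\}$. The only genuine difficulty is the bookkeeping in the first step — the stability of the $\sigma$-fields under the truncation $A \mapsto A\cap B$ and the check that $\hat\gamma_n$ lands in $\mathcal{S}_{\delta^+}$; once these are secured, the optimizing-sequence together with the tower-property argument is routine.
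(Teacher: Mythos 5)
Your argument is correct and follows essentially the same route as the paper: an optimizing sequence from Proposition \ref{opti_seq} for $v(\rho)$, a truncation of its terms to land in $\mathcal{S}_{\delta^+}$ so that $v^+(\delta)$ dominates the conditional expectations on $\{\rho>\delta\}$, and a passage to the limit via the tower property and monotone convergence. The only difference is organizational — the paper obtains the domination step by showing $\rho_n>\delta$ on $\{\rho>\delta\}$ and invoking the already-established inequality \eqref{Eq0}, which encapsulates exactly the truncation you perform by hand, and your explicit treatment of $A\cap\{\rho>\delta\}\in\mathcal{F}_\rho$ is if anything slightly more careful than the paper's implicit use of the tower property on that event.
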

   
   \begin{proof}
   From Proposition \ref{opti_seq}, there exists a sequence of split stopping times $(\rho_n)_{n\in\mathbb{N}}=(H^n, \tau_n)_{n\in\mathbb{N}}$ in $\mathcal{S}_\rho$, such that the sequence $(E[\xi_{\rho_n} \vert \mathcal{F}_\delta])_{n\in \mathbb{N}}$ is non-decreasing and 
$$v(\rho) = \lim\limits_{n\rightarrow \infty} \uparrow E[\xi_{\rho_n} \vert \mathcal{F}_\rho] \quad \text{a.s.}$$
   Therefore, using the fact that $\xi \in \mathbf{S}^{2}$, by dominated convergence theorem for conditional expectation, we get a.s. on $\lbrace \rho>\delta \rbrace$
   \begin{equation}\label{Eq2}
   E[v(\rho) \vert \mathcal{F}_\delta] = E\big[ \lim\limits_{n\rightarrow \infty} \uparrow E[\xi_{\rho_n} \vert \mathcal{F}_\rho]  \vert \mathcal{F}_\delta\big]=  \lim\limits_{n\rightarrow \infty} \uparrow E[\xi_{\rho_n} \vert \mathcal{F}_\delta].
   \end{equation}
   Moreover, for each $n\in\mathbb{N}$, we have $\tau_n \geq \tau$ and $H^n \cap \lbrace \tau_n=\tau \rbrace \subset H \cap \lbrace \tau_n=\tau \rbrace$. Thus, on $\lbrace \rho>\delta \rbrace$, we have; $\tau_n \geq \tau>\sigma$ a.s. on $\lbrace \sigma<T \rbrace$, $\tau_n=\tau=T$ a.s. on $\lbrace \sigma=T \rbrace$ and $H \cap \lbrace \sigma=\tau \rbrace \subset G \cap \lbrace \sigma=\tau\rbrace$, hence 
   \begin{align*}
   H^n \cap \lbrace \tau_n=\sigma \rbrace&=  H^n \cap \lbrace \tau_n=\sigma \rbrace \cap \lbrace \sigma=T \rbrace  \cap \lbrace \sigma=\tau \rbrace\\
   &\subset H \cap \lbrace \tau_n=\sigma \rbrace \cap \lbrace \sigma=T \rbrace  \cap \lbrace \sigma=\tau \rbrace\\
   &\subset G \cap \lbrace \tau_n=\sigma \rbrace,
   \end{align*}
   which shows that on $\lbrace \rho>\delta \rbrace$, $\rho_n>\delta$. Then, from Eq. (\ref{Eq0}), we get $E[\xi_{\rho_n} \vert \mathcal{F}_\delta]\leq v^+(\delta)$ a.s. From this and Eq. (\ref{Eq2}) we obtain the expected result $E[\xi_{\rho} \vert \mathcal{F}_\delta]\leq v^+(\delta)$ a.s. on $\lbrace \rho>\delta \rbrace$.\\
   \end{proof}
   
  We will now give a regularity property which holds for any uniformly integrable supermartingale family indexed by split stopping times. We first present the definition of an admissible family that is right limited along split stopping times (RL in short). 
   
   \begin{definition}(\textbf{Right-limited family}) \label{RL}
   An admissible family $(U(\delta), \delta\in \mathcal{S})$ is said to be right limited along split stopping times (RL) if for every $\delta=(G,\sigma)\in\mathcal{S}_0$  there exists an $\mathcal{F}_\delta$-measurable random variable $U(\delta^+)$ such that, for any non-increasing sequence of split stopping times $(\delta_n)_{n\in \mathbb{N}} =(G^n,\sigma_n)_{n\in \mathbb{N}}$ such that $\delta_n \downarrow \delta$ (in the sense, $G^n \downarrow G$, $\sigma_n \downarrow \sigma$ and for each $n$, $\delta_n>\delta$), one has $U(\delta^+)= \lim\limits_{n \rightarrow \infty} U(\delta_n)$ a.s. 
   \end{definition}
   
   \begin{theorem} \label{RL theorem}
A uniformly integrable supermartingale family $(U(\rho), \rho\in \mathcal{S}_0)$, is right limited along split stopping times (RL) at any $\delta=(G, \sigma)\in\mathcal{S}_0$.
   \end{theorem}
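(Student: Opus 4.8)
The plan is to imitate the classical proof of existence of right limits for supermartingales, carried out entirely at the level of families indexed by split stopping times. I would proceed in three steps: (i) convergence along one fixed approximating sequence; (ii) independence of the limit from the chosen sequence; (iii) identification and measurability of the limit.

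For step (i), fix $\delta=(G,\sigma)$ and a sequence $\delta_n=(G^n,\sigma_n)\downarrow\delta$ with $\delta_n>\delta$. Since $\delta_1\ge\delta_2\ge\cdots$, the fields $\mathcal F_{\delta_n}$ decrease, and the supermartingale-family property (Definition \ref{s.m.f}) applied to $\delta_n\ge\delta_{n+1}$ gives $E[U(\delta_n)\mid\mathcal F_{\delta_{n+1}}]\le U(\delta_{n+1})$, i.e. $(U(\delta_n),\mathcal F_{\delta_n})_n$ is a reversed (backward) supermartingale. Taking expectations shows $n\mapsto E[U(\delta_n)]$ is non-decreasing, and it is bounded above by $E[U(\delta)]<\infty$ because $\delta_n\ge\delta$. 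Combining this with the assumed uniform integrability of the family (so $\sup_n E[|U(\delta_n)|]<\infty$), the backward supermartingale convergence theorem -- Doob's downcrossing inequality applied to the reversed finite sequence, which is an ordinary forward supermartingale, together with uniform integrability -- yields that $U(\delta_n)$ converges a.s.\ and in $L^1$ to an integrable $\ell$.

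Step (ii) is the heart of the argument. Given a second sequence $\delta'_m\downarrow\delta$ with limit $\ell'$, I would show $\ell\le\ell'$ and then conclude by symmetry. Working on $\{\sigma<T\}$, where $\sigma_n>\sigma$ and $\sigma'_m>\sigma$, fix $m$ and splice the two times by setting $\alpha_n:=\delta_n\vee\delta'_m$; a routine construction (as in Proposition \ref{opti_seq}) shows $\alpha_n\in\mathcal S_0$ and $\alpha_n\ge\delta_n$. Since $\sigma_n\downarrow\sigma<\sigma'_m$, for a.e.\ $\omega$ one has $\alpha_n=\delta'_m$ for all large $n$, hence $U(\alpha_n)\to U(\delta'_m)$ in $L^1$. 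The supermartingale property gives $U(\delta_n)\ge E[U(\alpha_n)\mid\mathcal F_{\delta_n}]$; letting $n\to\infty$ and invoking Hunt's lemma (convergence of $E[\,\cdot\mid\mathcal F_{\delta_n}]$ for an $L^1$-convergent integrand along the decreasing fields $\mathcal F_{\delta_n}$), together with the identification $\bigcap_n\mathcal F_{\delta_n}=\mathcal F_\sigma$ on $\{\sigma<T\}$ (which follows from right-continuity of $\mathbb F$ and $\sigma_n>\sigma$), I obtain $\ell\ge E[U(\delta'_m)\mid\mathcal F_\sigma]$. Finally I let $m\to\infty$: $U(\delta'_m)\to\ell'$ in $L^1$ with $\ell'$ already $\mathcal F_\sigma$-measurable, so the right-hand side tends to $\ell'$ and $\ell\ge\ell'$ a.s.\ on $\{\sigma<T\}$. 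On $\{\sigma=T\}$ the strictness $\delta_n>\delta$ together with $G^n\downarrow G$ forces $\delta_n=\delta$, so $U(\delta_n)=U(\delta)$ there and the limit is trivially sequence-independent.

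For step (iii), I define $U(\delta^+)$ as this common a.s.\ limit; it is the candidate required by Definition \ref{RL}, and its $\mathcal F_\delta$-measurability is obtained from the identification of the tail field $\bigcap_n\mathcal F_{\delta_n}$ (using right-continuity of $\mathbb F$ and $G^n\downarrow G$). The main obstacle is step (ii): unlike the deterministic-time setting there is no pathwise skeleton against which to compare, so the two limits must be related through the supermartingale inequality after a measurable splicing that makes $\alpha_n$ eventually equal to $\delta'_m$. The delicate points are the correct handling of the split structure (the sets $G^n$) when passing to the limit, the applicability of Hunt's lemma along decreasing $\sigma$-fields, and the precise identification of $\bigcap_n\mathcal F_{\delta_n}$, all of which rely essentially on the right-continuity of the filtration.
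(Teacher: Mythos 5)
Your proof is correct in outline and its first step coincides with the paper's: both reindex $(U(\delta_n))_n$ as a supermartingale over non-positive integers with respect to the increasing fields $\mathcal{F}_{\delta_{-n}}$ and invoke the convergence theorem for uniformly integrable backward supermartingales to obtain a.s.\ and $L^1$ convergence. Where you genuinely diverge is in showing that the limit does not depend on the approximating sequence. The paper's argument is pathwise: it merges $(\delta_n)$ and $(\delta'_n)$ into a single reordered non-increasing sequence $(\bar\delta_n)$, applies the convergence theorem once more, and observes that for a.e.\ $\omega$ the merged sequence visits every value of each original one, so by admissibility all three limits agree. Your argument is martingale-theoretic: you compare the two limits through the supermartingale inequality $U(\delta_n)\ge E[U(\delta_n\vee\delta'_m)\mid\mathcal F_{\delta_n}]$, pass to the limit in $n$ via Hunt's lemma along the decreasing fields $\mathcal F_{\delta_n}$, and then let $m\to\infty$; your separate treatment of $\{\sigma=T\}$, where $\delta_n>\delta$ together with $G^n\downarrow G$ forces $U(\delta_n)=U(\delta)$ by admissibility, is correct and is implicitly what the paper relies on as well. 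Each route has its own ``routine but nontrivial'' ingredient: the paper must check that the pathwise reordering of the times $\sigma_k,\sigma'_k$ \emph{and} of the sets $G^k,G'^k$ again yields a non-increasing sequence of split stopping times (the sets coming from the two sequences are not nested with respect to one another, so this is not automatic), while you must verify that $\delta_n\vee\delta'_m$ is a split stopping time dominating $\delta_n$ and identify the tail field $\bigcap_n\mathcal F_{\delta_n}$ (on $\{\sigma<T\}$ its trace is that of $\mathcal F_\sigma$, which suffices because $\ell'$ is $\mathcal F_\sigma$-measurable). One caveat, common to your write-up and to the paper's: the limit is a priori measurable only with respect to $\bigcap_n\mathcal F_{\delta_n}$, which contains $\mathcal F_\delta$ but need not equal it (on $G\cap\{\sigma<T\}$ the field $\mathcal F_\delta$ only sees $\mathcal F_{\sigma^-}$), so the $\mathcal F_\delta$-measurability demanded by Definition \ref{RL} is asserted rather than fully established in both arguments.
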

   
   \begin{proof}
   Fix $\delta \in \mathcal{S}_0$. Let $(\delta_n)_{n\in \mathbb{N}} =(G^n,\sigma_n)\in \mathcal{S}_{\delta^+}$, such that $\delta_n \downarrow \delta$ in the sense of Definition \ref{RL}. Put $Z_n:= U(\delta_{-n})$ and $\mathcal{G}_n= \mathcal{F}_{\delta_{-n}}$ for every $n\leq 0$. The sequence $(Z_n)_{n\leq0}$ is a supermartingale relative to the non-decreasing filtration $(\mathcal{G}_n)_{n\leq0}$. Since the family $U$ is uniformly integrable, by the convergence theorem for discrete supermartingales indexed by non-positive integers, and uniformly bounded in $L^1$  (cf., Chap.V, Theorem 30, \cite{Probabilites et Potentiel2}, p. 26), there exists an integrable random variable $Z$ such that the sequence $(Z_n)_{n\leq0}$ converges a.s. and in $L^1$ to $Z$. The random variable $Z$ is $\mathcal{F}_\delta$-measurable. Indeed, we have $\delta_n \downarrow \delta$, i.e., $\sigma_n \downarrow \sigma$, $G^n \downarrow G$, and for each $n$ $\delta_n > \delta$.  Hence, for every $n\leq 0$, $\sigma_{-n} \uparrow \sigma$, and $\sigma_{-n} < \sigma$. Moreover, we have $G=\cap_{n \geq 0} \downarrow G^n$, then $G \subset G^{-n}$, for every $n\leq 0$. Thus, $\delta^{-n} < \delta$, and $\mathcal{F}_{\delta_{-n}} \subset \mathcal{F}_{\delta}$, for each $n\leq 0$. Therefore, $Z_n:=U(\delta_{-n})$ is  $\mathcal{F}_{\delta}$-measurable for all $n\leq 0$. 
Finally, since the sequence $(Z_n)_{n\leq0}:= (U(\delta_{-n}))_{n\leq0}$ converges a.s. and in $L^1$ to $Z$, and the limit a.s. preserves measurability, we conclude that $Z$ is $\mathcal{F}_{\delta}$-measurable.
   
   As such, we set $U(\delta^+)$ to be $U(\delta^+):=Z$. We are left with the task of showing that this limit does not depend on the sequence $(\delta_n)$. For this purpose, let $(\delta'_n)_{n\in \mathbb{N}} =(G'^n,\sigma'_n)\in \mathcal{S}_{\delta^+}$ be a sequence such that $\delta'_n \downarrow \delta$. Similarly, there exists a random variable $Z'$ such that $(Z'_n)_{n\leq0}$ converges a.s. and in $L^1$ to $Z'$. Our goal is to show that $Z=Z'$ a.s. \\
 \hspace*{0.5 cm}For each $n$ and each $\omega \in \Omega$, we define $\bar{\sigma}_n(\omega):= \sigma^{(n)}(\omega)$ \big(resp. $\bar{G}^n:= G^{(n)}$\big), where $\sigma^{(2n)}(\omega)\leq \sigma^{(2n-1)}(\omega) \leq  \ldots \leq \sigma^{(0)}(\omega)$ \big(resp. $G^{(2n)} \subset G^{(2n-1)},  \ldots, G^{(0)}$\big) are the reordered terms of the sequence $\sigma_0(\omega),  \ldots, \sigma_n(\omega), \sigma'_0(\omega)  \ldots, \sigma'_n(\omega)$ \big(resp. $G^0,  \ldots, G^n, G'^0,$\\$  \ldots, G'^n$\big). Let  $(\bar{\delta_n})_{n\in \mathbb{N}}:=(\bar{G}^n, \bar{\sigma}_n)_{n\in \mathbb{N}}$. 
 It is easy to check that for each $n\in\mathbb{N}$, $\bar{\delta_n} \downarrow \delta$ in the sense of Definition \ref{RL}. Again, by the supermartingales convergence theorem, there exists a random variable $\bar{Z}$ such that $(U(\bar{\delta_n}))_{n\in \mathbb{N}}$ converges a.s. to $\bar{Z}$. Therefore, $$Z=\bar{Z}\quad \text{a.s.}$$ Indeed, for almost each $\omega\in \Omega$, the sequence $(\bar{\delta_n}(\omega))$ outlines all the values taken by the sequences $(\delta_n(\omega))$ and $(\delta'_n(\omega))$. In other words, for each $p$, there exists $n(\omega) \geq p$ such that $\bar{\delta}_n(\omega)= \delta_p(\omega)$. 
Since $U$ is an admissible family, for each $n,p \geq 0$, we have 
\begin{equation}
U(\delta_p)= U(\bar{\delta}_n )\,\, \text{a.s.} \quad \text{on} \quad \lbrace \bar{\delta}_n=\delta_p\rbrace.
\end{equation}
Let $\epsilon > 0$ and for almost each $\omega$, suppose that $Z(\omega)$ and $\bar{Z}(\omega)$ are finite. By definition of the limit, there exists $N(\omega)\geq 0$, such that for every $n,p \geq N(\omega)$, 
\begin{equation} \label{Eq3}
\vert U(\delta_p(\omega)) - Z(\omega) \vert \leq \epsilon \quad \text{and} \quad \vert U(\bar{\delta}_n(\omega)) - \bar{Z}(\omega) \vert \leq \epsilon. 
\end{equation}
Moreover, there exists $n_0(\omega) \geq N(\omega)$, such that $\delta_{N(\omega)}(\omega)= \bar{\delta}_{n_0(\omega)}(\omega)$. Thus, by Eq. (\ref{Eq3}), $$U(\delta_{N(\omega)}(\omega))= U(\bar{\delta}_{n_0(\omega)}(\omega)).$$ 
We conclude that $\vert Z(\omega) - \bar{Z}(\omega) \vert \leq 2\epsilon$ for each $\epsilon>0$, thus we have $Z(\omega)= \bar{Z}(\omega)$. Likewise, we prove that if $Z(\omega)=\infty$ or $\bar{Z}(\omega)= \infty$, then both are not finite. 
This gives $Z=\bar{Z}$ a.s. By symmetry, $Z'=\bar{Z}$ a.s., hence $Z=Z'$ a.s. The proof of the theorem is thus complete. \\
   \end{proof}
   
   Let us now give the definition of a right continuous admissible family along split stopping times, and some useful properties. 
   
   \begin{definition}(\textbf{Right continuous family along s.s.t.})
	An admissible family $(U(\delta),  \delta \in \mathcal{S}_0)$ is said to be right continuous along split
stopping times (RC in short) if for any $\delta \in \mathcal{S}_0$ and for any sequence of split stopping times $(\delta_n)_{n\in \mathbb{N}}$, such that $\delta_n \downarrow \delta$ one has $U(\delta) = \lim\limits_{n \rightarrow \infty} U(\delta_n)$ a.s.
  \end{definition}
  
  \begin{proposition} \label{v,v+ prop}
  Let $(U(\delta), \delta\in \mathcal{S}_0)$ be a uniformly integrable admissible supermartingale family. Then, the family $U$ has the following properties 
\begin{enumerate}
\item For each $\delta \in \mathcal{S}_0$, we have $U(\delta^+) \leq U(\delta)$ a.s. 
\item If $(U(\delta), \delta\in \mathcal{S}_0)$ is non-negative and RCE along split stopping times. Then,
 $(U(\delta), \delta$\\$\in \mathcal{S}_0) $ is RC, that is, for each $\delta \in \mathcal{S}_0$, $U(\delta^+)=U(\delta)$ a.s. 
\end{enumerate}  
  \end{proposition}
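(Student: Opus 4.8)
The plan is to derive both statements from the right-limitedness already established in Theorem \ref{RL theorem}, the point being to upgrade the almost-sure convergence $U(\delta_n)\to U(\delta^+)$ to $L^1$ convergence via uniform integrability and then to pass to the limit in the supermartingale inequality of Definition \ref{s.m.f}.

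For the first assertion, I would fix $\delta=(G,\sigma)\in\mathcal{S}_0$ and choose a sequence $(\delta_n)_{n}\in\mathcal{S}_{\delta^+}$ with $\delta_n\downarrow\delta$ in the sense of Definition \ref{RL}. By Theorem \ref{RL theorem} the limit $U(\delta^+)=\lim_{n}U(\delta_n)$ exists a.s.\ and is $\mathcal{F}_\delta$-measurable. Since each $\delta_n>\delta$, hence $\delta_n\geq\delta$, the supermartingale property of $U$ gives $E[U(\delta_n)\mid\mathcal{F}_\delta]\leq U(\delta)$ a.s.\ for every $n$. Because $U$ is uniformly integrable and $U(\delta_n)\to U(\delta^+)$ a.s., Vitali's convergence theorem gives $U(\delta_n)\to U(\delta^+)$ in $L^1$. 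As conditional expectation is an $L^1$-contraction, $E[U(\delta_n)\mid\mathcal{F}_\delta]\to E[U(\delta^+)\mid\mathcal{F}_\delta]$ in $L^1$, so the inequality passes to the limit, yielding $E[U(\delta^+)\mid\mathcal{F}_\delta]\leq U(\delta)$ a.s. Finally, the $\mathcal{F}_\delta$-measurability of $U(\delta^+)$ gives $E[U(\delta^+)\mid\mathcal{F}_\delta]=U(\delta^+)$, whence $U(\delta^+)\leq U(\delta)$ a.s.

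For the second assertion I already have $U(\delta^+)\leq U(\delta)$ a.s.\ from the first part, so it suffices to prove that the two random variables share the same expectation. Keeping the same sequence $\delta_n\downarrow\delta$ with $\delta_n>\delta$, uniform integrability together with the a.s.\ convergence $U(\delta_n)\to U(\delta^+)$ gives $E[U(\delta_n)]\to E[U(\delta^+)]$, while the RCE hypothesis of Definition \ref{DefRCE} gives $E[U(\delta_n)]\to E[U(\delta)]$. Comparing the two limits yields $E[U(\delta^+)]=E[U(\delta)]$. Since $U(\delta^+)\leq U(\delta)$ a.s.\ with equal finite expectations, I conclude $U(\delta^+)=U(\delta)$ a.s.

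The delicate point is precisely the passage from a.s.\ to $L^1$ convergence, which is where the standing uniform integrability of the family is indispensable: non-negativity and Fatou's lemma alone reproduce only the ``easy'' inequality $E[U(\delta^+)]\leq E[U(\delta)]$ already contained in part (1), and it is the reverse estimate $E[U(\delta^+)]\geq E[U(\delta)]$ that forces the full strength of uniform integrability rather than Fatou. A secondary bookkeeping issue is to fix, once and for all, an approximating sequence with $\delta_n>\delta$ so that Theorem \ref{RL theorem} applies to define $U(\delta^+)$, while the same sequence is simultaneously admissible in the RCE criterion; this is harmless, since any strictly decreasing sequence $\delta_n\downarrow\delta$ is eligible for both.
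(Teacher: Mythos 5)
Your proof is correct and follows essentially the same route as the paper's: part (1) is obtained by passing to the limit in the supermartingale inequality $E[U(\delta_n)\mid\mathcal{F}_\delta]\leq U(\delta)$ using Theorem \ref{RL theorem} and uniform integrability, and part (2) by matching the limit of $E[U(\delta_n)]$ supplied by uniform integrability against the one supplied by the RCE hypothesis. Your explicit appeal to Vitali's theorem and the $L^1$-contractivity of conditional expectation merely spells out the limit passage that the paper leaves implicit.
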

  \begin{proof}
  Let  $(\delta_n)_{n\in \mathbb{N}} =(G^n,\sigma_n)_{n\in \mathbb{N}}$ such that $\delta_n \downarrow \delta$, i.e. $G^n \downarrow G$, $\sigma_n \downarrow \sigma$ and for each $n$, $\delta_n >\delta$. \\
   \hspace*{0.5 cm} The supermartingale property of $U$ implies that $E[ U(\delta_n) \vert \mathcal{F}_\delta] \leq U(\delta)$ a.s. for every $n$. 
 Passing to the limit and using the uniform integrability of $(U(\delta_n), n\geq 0)$ yields $E[ \lim\limits_{n \rightarrow \infty} U(\delta_n) \vert \mathcal{F}_\delta] \leq U(\delta)$ a.s.
 On the other hand, from Theorem \ref{RL theorem}, we have  $U(\delta^+) = \lim\limits_{n \rightarrow \infty} U(\delta_n)$ a.s. Thus, $E[ U(\delta^+) \vert \mathcal{F}_\delta] \leq U(\delta)$ a.s. 
 Considering that $U(\delta^+)$ is $\mathcal{F}_{\delta}$-measurable, we get $U(\delta^+) \leq U(\delta)$ a.s. The first assertion thus holds. \\
 \hspace*{0.5 cm} We now prove the second assertion. The uniform integrability property of $(U(\delta_n), n\geq 0)$  together with the RL property of $U$ yields $$E[U(\delta^+)]= E[\lim\limits_{n \rightarrow \infty} U(\delta_n)] = \lim\limits_{n \rightarrow \infty} E[U(\delta_n)] \quad \text{a.s.}$$ From the RCE property of $U$ over split stopping times, we have $E[U(\delta^+)]= E[U(\delta)]$ a.s. By this and the first assertion $U(\delta^+) \leq U(\delta)$ a.s., we get the desired conclusion.\\ 
  \end{proof}
  
  We can now state the following property satisfied by the value functions families $v$ and $v^+$. 
  
 \begin{proposition}\label{v+= v(+)}
 Let $\xi$ be a nonnegative ladlag process in $\mathbf{S}^2$. Let $v$ and $v^+$ be the value functions families defined in Eq. (\ref{v}) and Eq. (\ref{v+}). For each $\delta \in \mathcal{S}_0$, we have 
 $$ v^+(\delta)= v(\delta^+) \quad \text{a.s.}$$
  \end{proposition}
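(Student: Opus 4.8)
The plan is to prove the two inequalities $v(\delta^+)\le v^+(\delta)$ and $v^+(\delta)\le v(\delta^+)$ separately. Throughout I use that both $v$ and $v^+$ are square-integrable --- hence uniformly integrable --- admissible supermartingale families (Propositions \ref{v v+ admiss} and \ref{snell env family}), so that Theorem \ref{RL theorem} guarantees that the right limits $v(\delta^+)$ and $v^+(\delta^+)$ exist, are $\mathcal{F}_\delta$-measurable, and coincide with the a.s.\ limits $\lim_n v(\delta_n)$ and $\lim_n v^+(\delta_n)$ taken along any sequence $(\delta_n)=(G^n,\sigma_n)\in\mathcal{S}_{\delta^+}$ with $\delta_n\downarrow\delta$. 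I would first record that $v^+$ is right continuous along split stopping times: since $\xi\ge 0$ forces $v^+\ge 0$ and $v^+$ is RCE by Proposition \ref{v+RCE}, the second assertion of Proposition \ref{v,v+ prop} gives $v^+(\delta^+)=v^+(\delta)$ a.s.

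The inequality $v^+(\delta)\le v(\delta^+)$ is the soft one. By Proposition \ref{v=xi vee v+} we have $v(\delta_n)=\xi_{\delta_n}\vee v^+(\delta_n)\ge v^+(\delta_n)$ for every $n$; letting $n\to\infty$ and invoking the RL property for both families gives $v(\delta^+)=\lim_n v(\delta_n)\ge\lim_n v^+(\delta_n)=v^+(\delta^+)=v^+(\delta)$, where the last equality is the right continuity of $v^+$ just established.

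For the reverse inequality $v(\delta^+)\le v^+(\delta)$ I fix a sequence $\delta_n\downarrow\delta$ as above and bound $E[v(\delta_n)\vert\mathcal{F}_\delta]$ by $v^+(\delta)$ uniformly in $n$. On $\{\sigma<T\}$ one has $\tau_n>\sigma$, so $\delta_n>\delta$ holds there and Lemma \ref{lemma} with $\rho=\delta_n$ gives $E[v(\delta_n)\vert\mathcal{F}_\delta]\le v^+(\delta)$ a.s.\ on $\{\sigma<T\}$. On $\{\sigma=T\}$, the relations $\delta_n\ge\delta$ and $G^n\downarrow G$ force $G^n=G$, hence $\delta_n=\delta$ there, so admissibility yields $v(\delta_n)=v(\delta)$; moreover $\mathcal{S}_\delta$ and $\mathcal{S}_{\delta^+}$ impose the same constraint $H\subseteq G$ on the pieces $(H,T)$ contributing on $\{\sigma=T\}$, whence $v(\delta)=v^+(\delta)$ there and $E[v(\delta_n)\vert\mathcal{F}_\delta]=v^+(\delta)$ on $\{\sigma=T\}$. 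Thus $E[v(\delta_n)\vert\mathcal{F}_\delta]\le v^+(\delta)$ a.s.\ for every $n$. Passing to the limit with the conditional Fatou lemma (licit since $\xi\ge 0$, so $v\ge 0$) and using $v(\delta^+)=\lim_n v(\delta_n)$ yields $E[v(\delta^+)\vert\mathcal{F}_\delta]\le\liminf_n E[v(\delta_n)\vert\mathcal{F}_\delta]\le v^+(\delta)$; as $v(\delta^+)$ is $\mathcal{F}_\delta$-measurable the left-hand side equals $v(\delta^+)$, giving the claim. Combining both directions yields $v(\delta^+)=v^+(\delta)$ a.s.

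The step I expect to require the most care is the treatment of the terminal event $\{\sigma=T\}$: one must verify that $\delta_n=\delta$ on this set and that the constraint sets defining $v$ and $v^+$ coincide there --- the latter via a pasting construction of the type used in the proof of Proposition \ref{v v+ admiss} --- so that the conditional bound from Lemma \ref{lemma}, which is only asserted on $\{\delta_n>\delta\}$, can be promoted to an almost sure inequality. By contrast the limit passage is routine, provided one works with conditional Fatou and nonnegativity rather than dominated convergence; the latter would demand an integrable majorant for $(v(\delta_n))_n$ that is not readily available, since the conditioning field $\mathcal{F}_\delta$ is fixed while $n$ varies.
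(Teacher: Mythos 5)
Your proof is correct and follows essentially the same route as the paper: the inequality $v(\delta^+)\ge v^+(\delta)$ comes from $v(\delta_n)\ge v^+(\delta_n)$ together with the right-continuity of $v^+$ (Propositions \ref{v+RCE} and \ref{v,v+ prop}), and the converse from Lemma \ref{lemma} applied along a sequence $\delta_n\downarrow\delta$ followed by a limit passage, exactly as in the paper. The only differences are technical refinements: you treat the terminal event $\lbrace \sigma=T\rbrace$ explicitly (the paper absorbs it by invoking Lemma \ref{lemma} on all of $\lbrace\delta_n>\delta\rbrace$, which is the whole space for the sequences considered), and you pass to the limit via conditional Fatou and nonnegativity where the paper uses the uniform integrability of $(v(\delta_n))_n$.
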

 
 \begin{proof}
 Let $\delta \in \mathcal{S}_0$. We first observe that since $v^+$ is RCE over split stopping times, the second assertion of Proposition \ref{v,v+ prop} implies that $v^+$ is RC. Let  $(\delta_n)_{n\in \mathbb{N}} =(G^n,\sigma_n)_{n\in \mathbb{N}}$ such that $\delta_n \downarrow \delta$, i.e. $G^n \downarrow G$, $\sigma_n \downarrow \sigma$ and for each $n$, $\delta_n > \delta$. For each $n$, we have $v(\delta_n) \geq v^+(\delta_n)$ a.s. and $\lim\limits_{n \rightarrow \infty} v^+(\delta_n)= v^+(\delta)$ a.s. By letting $n$ tend to $\infty$, we get $v(\delta^+) \geq v^+(\delta)$ a.s. \\
It remains to show the other inequality. Since for each $n$, $\delta_n > \delta$, by Lemma \ref{lemma}, we have $v^+(\delta)\geq E[v(\delta_n) \vert \mathcal{F}_\delta]$ a.s. for every $n$. Passing to the limit and using the uniform integrability property of $(v(\delta_n), n\in \mathbb{N})$, we get $v^+(\delta)\geq E[v(\delta^+) \vert \mathcal{F}_\delta]= v(\delta^+)$ a.s., where the last equality follows from the $\mathcal{F}_{\delta}$-measurability of $v(\delta^+)$. Hence, $ v^+(\delta)= v(\delta^+)$ a.s.\\
 \end{proof}
 
 \begin{remark} \label{v+(sigma-)}
Since $\mathcal{T}_0 \subset \mathcal{S}_0$, we consider the family $\lbrace v^+(\sigma), \sigma \in \mathcal{T}_0 \rbrace$ where $ v^+(\sigma)$ is defined for each $\sigma \in \mathcal{T}_0$, by
$$v^+(\sigma):= v^+(\delta), \quad \delta=(\emptyset, \sigma)\in \mathcal{S}_0.$$
The family $v^+=(v^+(\delta), \delta\in\mathcal{S}_0)$ is a supermartingale family over split stopping times, hence one can see that the family $\lbrace v^+(\sigma), \sigma \in \mathcal{T}_0 \rbrace$ is a supermartingale family over stopping times (see \cite[p. 5]{KobylanskiQuenez}). Consequently, the family $\lbrace v^+(\sigma), \sigma \in \mathcal{T}_0 \rbrace$ is left limited along stopping times at each time $\sigma \in \mathcal{T}_{0^+}$ (e.g., \cite[Theorem 4.3]{KobylanskiQuenez}) i.e., there exists an $\mathcal{F}_{\sigma^-}$-measurable random variable $v^+(\sigma^-)$ such that, for any nondecreasing sequence of stopping times $(\sigma_n)_{n \in \mathbb{N}}$, 
\begin{equation}
v^+(\sigma^-)=  \lim\limits_{n \rightarrow +\infty} v^+(\sigma_n) \quad \text{a.s. on} \quad A[(\sigma_n)],
\end{equation} 
where $A[(\sigma_n )] = \lbrace \sigma_n \uparrow \sigma$ and $ \sigma_n < \sigma$ for all $n \rbrace$.
\end{remark}
  
Let $\rho^T:= (\emptyset, T)$. Let $\xi$ be a nonnegative ladlag process in $\mathbf{S}^2$. Using the above results on the value functions families $v=(v(\delta), \delta\in\mathcal{S}_0)$ and $v^+=(v^+(\delta), \delta\in\mathcal{S}_0)$, we show the following main result, which generalizes the classical Snell envelope of the pay-off process $\xi$ to the case of split stopping times. 

\begin{theorem}(\textbf{Aggregation and Snell envelope}) \label{snellTh}
There exists a positive optional process, denoted by $(v_t)_{t\in[0,T]}$, which aggregates the value function family $v=(v(\delta), \delta\in\mathcal{S}_0)$, that is, for each $\delta \in \mathcal{S}_0$, $v_\delta  = v(\delta)$ a.s. Moreover, $(v_t)_{t\in[0,T]}$ is the Snell envelope of the pay-off process $\xi$, that is, the smallest strong supermartingale greater than or equal to $\xi$. It is called the Snell envelope of $\xi$ over split stopping times.
\end{theorem}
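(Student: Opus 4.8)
The plan is to reduce the construction to the aggregation theory already available over ordinary stopping times and then to promote the resulting process to split stopping times. Since $\mathcal{T}_0\subset\mathcal{S}_0$, the restriction $(v(\sigma))_{\sigma\in\mathcal{T}_0}$ is, by Propositions \ref{v v+ admiss} and \ref{snell env family}, an admissible square-integrable supermartingale family indexed by stopping times; moreover, by Propositions \ref{v+RCE} and \ref{v,v+ prop} the associated strict family $(v^+(\sigma))_{\sigma\in\mathcal{T}_0}$ is right-continuous, and $v(\sigma)=\xi_\sigma\vee v^+(\sigma)$ with $v^+(\sigma)=v(\sigma^+)$ by Propositions \ref{v=xi vee v+} and \ref{v+= v(+)}. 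This is exactly the setting of the classical optimal stopping problem over stopping times, so the aggregation theorem of Kobylanski and Quenez \cite{KobylanskiQuenez} applies and yields a ladlag optional process $(v_t)_{t\in[0,T]}$ with $v_\sigma=v(\sigma)$ a.s. for every $\sigma\in\mathcal{T}_0$, whose right limits satisfy $v_{\sigma+}=v(\sigma^+)=v^+(\sigma)$. Positivity is inherited from $\xi\geq 0$.

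Next I would record two structural facts. First, $(v_t)$ is a strong supermartingale: for $S\leq\tau$ in $\mathcal{T}_0$ the supermartingale-family property gives $v_S=v(S)\geq E[v(\tau)\mid\mathcal{F}_S]=E[v_\tau\mid\mathcal{F}_S]$, while integrability follows from $v\in\mathbf{S}^2$. Second, $v_t\geq\xi_t$ for all $t$: since $v(\sigma)\geq\xi_\sigma$ for every $\sigma\in\mathcal{T}_0$, the section theorem (Theorem \ref{section}) gives $v\geq\xi$ up to indistinguishability, and passing to left limits yields $v_{t^-}\geq\xi_{t^-}$.

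The heart of the proof is to show that, setting $v_\delta:=v_{\sigma^-}\mathbf{1}_G+v_\sigma\mathbf{1}_{G^c}$ for $\delta=(G,\sigma)\in\mathcal{S}_0$, one has $v_\delta=v(\delta)$ a.s. The inequality $v_\delta\geq v(\delta)$ is the easy direction: $(v_t)$ is a strong supermartingale dominating $\xi$, so Theorem \ref{str} gives $v_\delta\geq E[v_\rho\mid\mathcal{F}_\delta]\geq E[\xi_\rho\mid\mathcal{F}_\delta]$ for every $\rho\in\mathcal{S}_\delta$, and taking the essential supremum over $\mathcal{S}_\delta$ yields $v_\delta\geq v(\delta)$. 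For the reverse inequality I would argue separately on $G^c$ and on $G$. On $G^c$ the split time $\delta$ agrees with the ordinary time $(\emptyset,\sigma)$, so the admissibility of $v$ (Proposition \ref{v v+ admiss}) gives $v(\delta)=v(\sigma)=v_\sigma$ there. On $G$ the time $\sigma$ acts predictably and $v_\delta=v_{\sigma^-}$; here I would first identify $v^+(\sigma^-)=v_{\sigma-}$, using that $v^+(\sigma_n)=v_{\sigma_n+}$ converges to the left limit $v_{\sigma-}$ as $\sigma_n\uparrow\sigma$ with $\sigma_n<\sigma$ (Remark \ref{v+(sigma-)}), and then invoke Proposition \ref{v=xi vee v+} at the predictable split time to write $v(\delta)=\xi_{\sigma^-}\vee v^+(\sigma^-)=\xi_{\sigma^-}\vee v_{\sigma-}=v_{\sigma-}$ on $G$, the last equality because $v_{\sigma-}\geq\xi_{\sigma^-}$. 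This predictable left-limit matching is the step I expect to be the main obstacle, since it is precisely where the split (predictable) structure goes beyond the purely optional framework of \cite{KobylanskiQuenez} and forces one to reconcile the left limit of the aggregating process with the value family on the set $G$.

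Finally, the Snell-envelope characterization follows from minimality. The process $(v_t)$ is a strong supermartingale with $v\geq\xi$ by the preceding steps. If $(U_t)$ is any strong supermartingale with $U\geq\xi$, then for each $\sigma\in\mathcal{T}_0$ Theorem \ref{str} gives $U_\sigma\geq E[U_\rho\mid\mathcal{F}_\sigma]\geq E[\xi_\rho\mid\mathcal{F}_\sigma]$ for all $\rho\in\mathcal{S}_\sigma$, whence $U_\sigma\geq v(\sigma)=v_\sigma$; the section theorem (Theorem \ref{section}) then yields $U_t\geq v_t$ for all $t$, so $(v_t)$ is the smallest such process. Uniqueness of the aggregating process follows from the section theorem as well.
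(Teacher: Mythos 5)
Your proposal is correct in outline but organizes the construction differently from the paper. The paper works with the strict value family first: it aggregates $(v^+(t))_{t\in[0,T]}$ by a rcll supermartingale $(v^+_t)$ using the classical result for RCE supermartingale families, upgrades the identity $v^+_\sigma=v^+(\sigma)$ from dyadic to arbitrary stopping times by a dyadic approximation and the RCE property, matches the left limits $v^+_{\sigma^-}=v^+(\sigma^-)$, and only then defines $v_t:=\xi_t\vee v^+_t$, so that $v_\delta=v(\delta)$ drops out of Proposition \ref{v=xi vee v+}. You instead aggregate $v$ over ordinary stopping times by outsourcing exactly those steps to the Kobylanski--Quenez machinery, and then extend to split times by a two-sided inequality, with Theorem \ref{str} supplying the upper bound $v_\delta\geq v(\delta)$ that the paper never needs. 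One caveat on the citation: the restriction of $v$ to $\mathcal{T}_0$ is \emph{not} the classical value function of $\xi$ over stopping times (the supremum still runs over split times, so $\xi_{\tau^-}$ enters), so you cannot invoke the KQ aggregation theorem as a statement about optimal stopping; what you are really using is that any admissible supermartingale family satisfying $v=\xi\vee v^+$ with $v^+$ RCE can be aggregated, and the paper's Propositions \ref{v v+ admiss}--\ref{v+= v(+)} do put you in that position. More importantly, both proofs turn on the same crux that you correctly flag as the main obstacle: identifying $v^+(\delta)$ on $G$ --- defined intrinsically as $\esssup_{\rho\in\mathcal{S}_{\delta^+}}E[\xi_\rho\,\vert\,\mathcal{F}_\delta]$ --- with the left limit $v^+(\sigma^-)$ along an announcing sequence of $\sigma_G$. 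The paper asserts this at the outset of its proof by appealing to Eq.\ \eqref{Xrho} and Remark \ref{v+(sigma-)}, which only define the candidate left limit; neither you nor the paper actually proves that the essential supremum over $\mathcal{S}_{\delta^+}$ coincides with $\lim_n v^+(\sigma_n)$ on $G$. So your route is a legitimate and arguably cleaner reorganization, but it does not close the one step that genuinely goes beyond the optional framework, and you should not leave it as an expectation: a proof needs a two-sided comparison between $\mathcal{S}_{\delta^+}$ and the sets $\mathcal{S}_{(\emptyset,\sigma_n)^+}$ analogous to the manipulations in the proof of Proposition \ref{v+RCE}.
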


\begin{proof}
Let us consider the process $(v^+(t))_{t\in [0,T]}$. Since the supermartingale family $(v^+(\delta), \delta\in\mathcal{S}_0)$ is RCE over split stopping times, the process $(v^+(t))$ is a supermartingale and the function $t \rightarrow E[v^+(t)]$ is right-continuous. From this, Eq. \eqref{Xrho} and Remark \ref{v+(sigma-)}, we have for each $\delta=(G,\sigma) \in \mathcal{S}_0$
$$v^+(\delta):= v^+(\sigma^-) \mathbf{1}_{G} +  v^+(\sigma) \mathbf{1}_{G^c}.$$  \\
From a classical result (see for instance Theorem 3.13 in Karatzas and Shreeve \cite{Kara} and Proposition 4.1 in Kobylanski et al. \cite{Kobylanski}), there exists a rcll supermartingale $(v^+_t)_{t\in[0,T]}$ such that for each $t \in [0,T]$, $v^+_t= v^+(t)$ a.s. It is easy to check that for each dyadic stopping time $\sigma \in \mathcal{T}_0$, we have $v^+_\sigma= v^+(\sigma)$ and 
\begin{equation} \label{Eq100} 
E[v^+_{\sigma}]=E[v^+(\sigma)].
\end{equation}
Let now $\sigma$ be a stopping time in $\mathcal{T}_0$, one can show that equality \eqref{Eq100} still holds for $\sigma$. Indeed, 
\begin{equation}
\sigma_n= \sum_{k=0}^\infty \frac{k+1}{2^n} \mathbf{ 1}_{\lbrace \frac{k}{2^n} <\sigma\leq \frac{k+1}{2^n} \rbrace} + T \mathbf{1}_{\lbrace\sigma=T\rbrace}, \quad n=0,1,2, \ldots
\end{equation}
is a decreasing sequence of stopping times such that $\sigma= \lim\limits_{n \rightarrow + \infty} \downarrow \sigma_n$. Since the process $(v^+_t)_{0 \leq t \leq T}$ is rcll and the family  $(v^+(\delta),\delta\in\mathcal{S}_0)$ is RCE over split stopping times, from dominated converegence theorem we get
\begin{equation}
E[v^+_\sigma]= \lim\limits_{n \rightarrow + \infty} E[v^{+}_{\sigma_n}]=  
\lim\limits_{n \rightarrow + \infty} E[v^+(\sigma_n)]= E[v^+(\sigma)]. 
\end{equation} 
Let $A\in \mathcal{F}_{\sigma}$ and define $\sigma_A= \sigma \mathbf{1}_A + T \mathbf{1}_{A^c}$,  $\sigma_A$ is a stopping time hence $E[v^{+}_{\sigma_A}]= E[v^+(\sigma_A)]$. Since $v^+_T= v^+(T)$ a.s., it follows that $E[v^+_{\sigma} \mathbf{1}_A]= E[v^+(\sigma) \mathbf{1}_A ]$, which implies that 
\begin{equation} \label{Eq10} 
v^+_\sigma= v^+(\sigma) \quad \text{a.s.}
\end{equation}
Let $(\sigma_n)_{n\in \mathbb{N}}$ be a nondecreasing sequence of stopping times such that $\sigma_n \uparrow \sigma$ and $\sigma_n < \sigma$ for all $n$. From \eqref{Eq10}, we have $v^+_{\sigma_n}= v^+(\sigma_n)$, since $(v^+_t)$ is left limited and the family  $\lbrace v^+(\sigma), \sigma \in \mathcal{T}_0 \rbrace$ is left limited along stopping times, Remark \ref{v+(sigma-)} implies that 
\begin{equation} \label{Eq11}
v^+_{\sigma^-}= \lim\limits_{n \rightarrow +\infty} v^+_{\sigma_n} = \lim\limits_{n \rightarrow +\infty} v^+(\sigma_n)= v^+(\sigma^-) \quad \text{a.s.}
\end{equation}
On the other hand, for each $\delta=(G,\sigma) \in \mathcal{S}_0$, $v^+_\delta$ is defined by: $v^+_\delta:= v^+_{\sigma^-} \mathbf{1}_G + v^+_\sigma \mathbf{1}_{G^c}$. From \eqref{Eq10} and \eqref{Eq11}, we get 
\begin{equation} \label{Eq4}
v^+_\delta= v^+(\sigma^-) \mathbf{1}_G + v^+(\sigma )\mathbf{1}_{G^c} = v^+(\delta).
\end{equation}
The process $(v^+_t)$ thus aggregates the family $\lbrace v^+(\delta), \delta \in \mathcal{S}_0 \rbrace$. Let now the process $(v_t)_{t\in[0,T]}$ defined by 
\begin{equation}
v_t := \xi_t \vee v^+_t.
\end{equation} 
By Proposition \ref{v=xi vee v+} and the above, we clearly have $v(\delta)=v_\delta$, for each $\delta\in\mathcal{S}_0$. From Proposition \ref{v v+ admiss} and Proposition \ref{snell env family}, it follows that $(v_t)_{t\in[0,T]}$ is the smallest strong supermartingale bounding $\xi$ above (except on evanescent sets), which makes the proof ended.\\
\end{proof}

Note that since the process $(v_t)_{t\in[0,T]}$ is a strong supermartingale, it follows from Theorem 4 in \cite[Appendix 1, p. 408]{Probabilites et Potentiel2} that $(v_t)$ has right and left limits. We thus derive the following Theorem:

\begin{theorem} \label{thmpro}
\begin{enumerate}
\item[(i)] The strict value process $(v^+_t)_{t\in[0,T]}$ is right-continuous.
\item[(ii)] For each  $\delta \in \mathcal{S}_0$, $v^+_\delta=v_{\delta^+}$ a.s.
\item[(iii)] For each $\delta \in \mathcal{S}_0$, $v_\delta= v_{\delta^+}  \vee \xi_\delta$ a.s.
\item[(iv)] For each $\theta \in \mathcal{T}_0$ and for each  $\lambda \in ]0, 1[$, the process $(v_t)_{t\in[0,T]}$ is a martingale on $[\theta, \tau^{\lambda}_{\theta}]$, where $ \tau^\lambda_\theta:= \inf \lbrace t \geq \theta, \lambda v_t (\omega) \leq \xi_t \rbrace$.  
\end{enumerate}
\end{theorem}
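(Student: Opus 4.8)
The plan is to dispatch (i)--(iii) quickly from the aggregation already established and to concentrate the effort on (iv). For (i) I would not reprove anything: in the proof of Theorem \ref{snellTh} the process $(v^+_t)$ was \emph{constructed} as an rcll supermartingale aggregating the family $(v^+(\delta))$, the existence of such a modification being guaranteed precisely by the right-continuity of $t\mapsto E[v^+(t)]$, which itself comes from the RCE property of $v^+$ (Proposition \ref{v+RCE}). Hence right-continuity of $(v^+_t)$ is part of its very definition.

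For (ii) I would chain identities that are all on hand. First $v^+_\delta=v^+(\delta)$ by the aggregation identity \eqref{Eq4}; next $v^+(\delta)=v(\delta^+)$ by Proposition \ref{v+= v(+)}; and finally $v(\delta^+)=v_{\delta^+}$, since for $\delta_n\downarrow\delta$ one has $v(\delta^+)=\lim_n v(\delta_n)=\lim_n v_{\delta_n}=v_{\delta^+}$, using the right-limit property of Theorem \ref{RL theorem}, the aggregation $v_{\delta_n}=v(\delta_n)$, and the existence of right limits of the ladlag process $v$. Assertion (iii) then follows at once: $v_\delta=v(\delta)=\xi_\delta\vee v^+(\delta)$ by Proposition \ref{v=xi vee v+}, and $v^+(\delta)=v_{\delta^+}$ by (ii), whence $v_\delta=\xi_\delta\vee v_{\delta^+}$.

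The heart of the matter is (iv). Fix $\theta$ and $\lambda$, and write $\tau:=\tau^\lambda_\theta$. I would first reduce to a single identity: since $(v_t)$ is a strong supermartingale, Theorem \ref{str} gives, for every split stopping time $S$ with $\theta\le S\le\tau$, the chain $v_\theta\ge E[v_S\vert\mathcal F_\theta]\ge E[v_\tau\vert\mathcal F_\theta]$, so as soon as one proves $v_\theta=E[v_\tau\vert\mathcal F_\theta]$ all intermediate inequalities become equalities and the martingale property on $[\theta,\tau]$ follows. Only $v_\theta\le E[v_\tau\vert\mathcal F_\theta]$ remains. The mechanism is that on $[\theta,\tau)$ one has $\xi_t<\lambda v_t\le v_t$ (here $v\ge\xi\ge0$ is used to get $v_t\ge0$), so the obstacle lies strictly below $v$ and stopping before $\tau$ ought to be suboptimal. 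To exploit this I would appeal to the minimality of $v$ (Theorem \ref{snellTh}): with $M_t:=E[v_\tau\vert\mathcal F_t]$ the rcll martingale and $\tau':=\tau\wedge\inf\{t\ge\theta:M_t\le\xi_t\}$, set $v'_t:=M_{t\wedge\tau'}+(v_t-v_{t\wedge\tau'})$. One verifies that $v'$ is a strong supermartingale (a stopped martingale plus the supermartingale $v_t-v_{t\wedge\tau'}$ obtained by optional sampling of $v$ at $\tau'$), that $v'\le v$, and that $v'\ge\xi$ (on $[\theta,\tau')$ we have $v'=M>\xi$ by the choice of $\tau'$, and off this set $v'=v\ge\xi$). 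By minimality $v'\ge v$, which forces $v'_\theta=M_\theta=E[v_\tau\vert\mathcal F_\theta]\ge v_\theta$ on $\{\theta<\tau'\}$, giving the missing inequality.

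The main obstacle is the completely irregular, merely ladlag nature of $\xi$ and $v$, handled via split stopping times. Three points need care: (a) making the strict inequality $\xi<\lambda v$ on $[\theta,\tau)$ usable at the level of split stopping times and left limits, e.g. that $\xi_{t^-}<\lambda v_{t^-}$ where needed, which leans on the right-continuity of $v$ on $\{v>\xi\}$ coming from (iii); (b) the behaviour exactly at $\tau$, where the defining infimum need not be attained and where left and right jumps of $v$ and $\xi$ must be reconciled, for which part (iii) together with the RC/RL theory (Theorem \ref{RL theorem}, Proposition \ref{v,v+ prop}) is essential; and (c) the strong-supermartingale check for $v'$ across the random time $\tau'$ and the treatment of the degenerate set $\{M_\theta\le\xi_\theta\}$, on which the construction is vacuous and a separate local argument near $\theta$ is required. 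I expect (c) and the left-limit bookkeeping in (a)--(b) to be where essentially all the work lies. Once the Mertens decomposition and Skorokhod conditions of the value process are available, (iv) can alternatively be read off from the fact that the nondecreasing part of $v$ charges only $\{v=\xi\}\cup\{v_-=\xi_-\}$, which is empty on $[\theta,\tau)$; I would nonetheless prefer to give the self-contained minimality argument here.
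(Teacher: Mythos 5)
Your treatment of (i)--(iii) is correct and essentially coincides with the paper's: (i) comes from Proposition \ref{v+RCE} together with Proposition \ref{v,v+ prop} (equivalently, from the rcll construction inside the proof of Theorem \ref{snellTh}), and (ii)--(iii) are the chain $v^+_\delta=v^+(\delta)=v(\delta^+)=v_{\delta^+}$ (Eq. \eqref{Eq4}, Proposition \ref{v+= v(+)}, aggregation plus Theorem \ref{RL theorem}) followed by Proposition \ref{v=xi vee v+}.

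For (iv), however, your concatenation argument has a genuine gap, and it is not the one you flag in your point (c). The paper invokes Maingueneau's penalization method \cite{Maingueneau}: one shows that $J(\theta):=\lambda v(\theta)+(1-\lambda)E[v(\tau^\lambda_\theta)\vert\mathcal F_\theta]$ is a supermartingale family dominating $\xi$ (this uses $v\ge\xi\ge 0$ and, crucially, $\lambda<1$), so minimality of the Snell envelope yields $J\ge v$, hence $E[v(\tau^\lambda_\theta)\vert\mathcal F_\theta]\ge v(\theta)$. Your substitute $v'_t:=M_{t\wedge\tau'}+(v_t-v_{t\wedge\tau'})$ does \emph{not} dominate $\xi$ in general: for $t>\tau'$ one has $v'_t=v_t+(M_{\tau'}-v_{\tau'})$, and $M_{\tau'}=E[v_\tau\vert\mathcal F_{\tau'}]\le v_{\tau'}$, with strict inequality possible (equality there is essentially what you are trying to prove). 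So on $\{\tau'<\tau\}$ the process $v'$ sits strictly below $v$ after $\tau'$ and falls below $\xi$ at any later time where $v$ touches the obstacle; your assertion that ``off this set $v'=v\ge\xi$'' is false, and the same difficulty occurs at $t=\tau'$ itself, where $v'_{\tau'}=M_{\tau'}$ may be $\le\xi_{\tau'}$. A telling symptom is that your construction never uses $\lambda<1$: were it valid, it would give the martingale property up to $\tau^1_\theta=\inf\{t\ge\theta:\ v_t\le\xi_t\}$, which is known to fail for irregular obstacles. The strict gap $\xi<\lambda v$ on $[\theta,\tau^\lambda_\theta)$ is exactly what makes the convex combination $J$ dominate $\xi$, and any correct proof must exploit it. (As a minor additional point, your formula gives $v'_t=M_t$ for $t<\theta$, where $M$ need not dominate $\xi$ either; that part is easily repaired by leaving $v'=v$ before $\theta$, but the post-$\tau'$ failure is not.)
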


\begin{proof} Statement (i) is a direct consequence of Proposition \ref{v+RCE}, together
with part 2. of Proposition \ref{v,v+ prop}. Let us prove the second statement. From Proposition \ref{v+= v(+)} and Eq. \ref{Eq4}, we have for each $\delta \in\mathcal{S}_0$, $ v^+(\delta)=v^+_\delta= v(\delta^+)$ a.s. On the other hand, the process $(v_t)$ aggregates the family $(v(\delta), \delta \in \mathcal{S}_0)$, which implies that the process $(v_{t^+})$ aggregates the family $(v^+(\delta), \delta \in \mathcal{S}_0)$, that is, for each $\delta \in \mathcal{S}_0$, $v(\delta^+)=v_{\delta^+}$ a.s. Therefore, we get $ v^+(\delta)= v^+_\delta=v_{\delta^+}$ a.s., which is the expected result.
Assertion (iii) follows immediately from part (ii) (which we have just shown), together with Proposition \ref{v=xi vee v+}.\\ The last assertion corresponds to a known result of optimal stopping theory and can be shown in our case, using a penalization method, introduced by Maingueneau (cf. the proof of Theorem 2 in \cite{Maingueneau}). 
\end{proof}

\begin{remark}\label{rmkk}
On account of (iii), we have for each $\delta \in \mathcal{S}_0$, $\Delta_+ v_\delta = \mathbf{1}_{\lbrace v_\delta = \xi_\delta \rbrace} \Delta_+ v_\delta $ a.s.
\end{remark}

The following Lemma holds:

\begin{lemma} \label{mertens}
\begin{enumerate}
\item[(i)] The value process $v$ belongs to $\mathcal{S}^2$, and there exists a unique cadlag uniformly integrable martingale $M \in\mathbf{M}^2$, a unique nondecreasing right-continuous predictable process $A$ with
$A_0 = 0$, $E(A_{T}^2 ) <  \infty$, and a unique nondecreasing right-continuous adapted purely
discontinuous process $B$ with $B_{0^-} = 0$, $E(B_{T}^2) < \infty$, such that
\begin{equation}\label{Eq8}
v_\delta= v_0 + M_\delta- A_\delta - B_{\delta^-} \quad \text{a.s.} \quad \text{for all} \,\, \delta\in\mathcal{S}_0.
\end{equation}
\item[(ii)] For each $\delta \in \mathcal{S}_0$, we have $\Delta B_\delta = \mathbf{1_{\lbrace v_\delta =\xi_\delta \rbrace}} \Delta B_\delta$ a.s.
\item[(iii)] For each $\delta \in \mathcal{S}^p_0$, we have $\Delta A_\delta = \mathbf{1_{\lbrace v_{\delta^-} =\xi_{\delta^-} \rbrace}} \Delta A_\delta$ a.s.
\item[(iv)] The continuous part $A^c$ of $A$ satisfies the equality $\int_0^{\rho^T} \mathbf{1}_{\lbrace v_{t^-} > \xi_{t^-}\rbrace} dA^c_t =0$ a.s. 
\end{enumerate}
\end{lemma}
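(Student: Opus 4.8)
The plan is to treat the four assertions in order: (i) furnishes the Mertens decomposition, while (ii)--(iv) locate the increasing parts on the contact sets. For (i), I would first verify $v\in\mathbf{S}^{2}$. Setting $\eta:=\esssup_{\theta\in\mathcal{T}_0}\vert\xi_\theta\vert\in L^2$, the bound $\vert\xi_\rho\vert\le\eta$ for every s.s.t. $\rho$ (already exploited in \eqref{Eq5}) gives $v_\tau=\esssup_{\rho\in\mathcal{S}_\tau}E[\xi_\rho\mid\mathcal{F}_\tau]\le E[\eta\mid\mathcal{F}_\tau]$ for all $\tau\in\mathcal{T}_0$, so Doob's maximal inequality applied to the martingale $E[\eta\mid\mathcal{F}_\cdot]$ yields $E[\esssup_\tau v_\tau^2]<\infty$. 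Since $v$ is, by Theorem \ref{snellTh}, a nonnegative (hence class (D)) optional strong supermartingale, I would invoke the Mertens decomposition for strong supermartingales to get a unique triple $(M,A,B)$ with $M$ a uniformly integrable cadlag martingale, $A$ predictable right-continuous nondecreasing with $A_0=0$, and $B$ adapted right-continuous purely discontinuous nondecreasing with $B_{0^-}=0$, such that $v_t=v_0+M_t-A_t-B_{t^-}$ for all $t$. The identity at $\delta=(G,\sigma)$ then follows by linearity: evaluating the pointwise decomposition at $\sigma$ and at $\sigma^-$, weighting by $\mathbf{1}_G,\mathbf{1}_{G^c}$, and using $B_{\delta^-}=B_{\sigma^-}$ (cf. Remark \ref{reee}) produces \eqref{Eq8}. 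The $L^2$-bounds $E[A_T^2],E[B_T^2]<\infty$ and $M\in\mathbf{M}^2$ come from the usual energy estimate based on $A_T+B_{T^-}=v_0+M_T-v_T$ and $v\in\mathbf{S}^{2}$, and uniqueness is inherited from uniqueness of the Mertens decomposition together with the section theorem \ref{section}.

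For (ii), I would read off the right jump of $v$ from \eqref{Eq8}: since $M$ and $A$ are right-continuous, letting $s\downarrow t$ gives $v_{t^+}=v_0+M_t-A_t-B_t$, whence $\Delta_+v_t=-\Delta B_t$. For $\delta=(G,\sigma)$ this yields $\Delta B_\delta=B_\delta-B_{\delta^-}=\mathbf{1}_{G^c}\Delta B_\sigma=-\mathbf{1}_{G^c}\Delta_+v_\sigma$, so that $\mathbf{1}_{\{v_\delta>\xi_\delta\}}\Delta B_\delta=-\mathbf{1}_{G^c}\mathbf{1}_{\{v_\sigma>\xi_\sigma\}}\Delta_+v_\sigma$, which vanishes by Remark \ref{rmkk} (stating $\Delta_+v_\sigma=\mathbf{1}_{\{v_\sigma=\xi_\sigma\}}\Delta_+v_\sigma$); this is exactly (ii).

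For (iii), the key is to identify $\Delta A$ at predictable times. At predictable $\tau$, taking left limits in \eqref{Eq8} gives $v_\tau-v_{\tau^-}=\Delta M_\tau-\Delta A_\tau$; projecting onto $\mathcal{F}_{\tau^-}$, using that $\Delta A_\tau$ is $\mathcal{F}_{\tau^-}$-measurable ($A$ predictable) and $E[\Delta M_\tau\mid\mathcal{F}_{\tau^-}]=0$, yields $\Delta A_\tau=v_{\tau^-}-E[v_\tau\mid\mathcal{F}_{\tau^-}]$. It then remains to establish the predictable optimality relation $v_{\tau^-}=\xi_{\tau^-}\vee E[v_\tau\mid\mathcal{F}_{\tau^-}]$, the left analogue of Theorem \ref{thmpro}(iii): granting it, on $\{v_{\tau^-}>\xi_{\tau^-}\}$ one must have $v_{\tau^-}=E[v_\tau\mid\mathcal{F}_{\tau^-}]$, i.e. $\Delta A_\tau=0$, and passage to $\delta=(G,\sigma)\in\mathcal{S}^p_0$ is immediate since $\Delta A_\delta=\mathbf{1}_{G^c}\Delta A_\sigma$, $v_{\delta^-}=v_{\sigma^-}$, $\xi_{\delta^-}=\xi_{\sigma^-}$. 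To obtain the relation I would start from $v_{\tau^-}=v((\Omega,\tau))=\xi_{\tau^-}\vee v^+((\Omega,\tau))$ (Proposition \ref{v=xi vee v+} at $\delta=(\Omega,\tau)$), derive $v_{\tau^-}\ge E[v_\tau\mid\mathcal{F}_{\tau^-}]$ for free from Theorem \ref{str} applied to $(\Omega,\tau)\le(\emptyset,\tau)$, and prove the matching bound $v^+((\Omega,\tau))\le E[v_\tau\mid\mathcal{F}_{\tau^-}]$ via optimizing sequences. This identification of the strict value at $(\Omega,\tau)$ with the predictable projection $E[v_\tau\mid\mathcal{F}_{\tau^-}]$, carried out through the split-stopping order and with care on $\{\tau=T\}$, is the main obstacle of the proof.

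For (iv), I would exploit the martingale property of Theorem \ref{thmpro}(iv): for $\theta\in\mathcal{T}_0$ and $\lambda\in(0,1)$, $v$ is a martingale on $[\theta,\tau^\lambda_\theta]$, so by uniqueness of the Mertens decomposition $A$, and in particular $A^c$, is flat on $[\theta,\tau^\lambda_\theta]$. Fixing $\lambda$, if $\lambda v_{t^-}(\omega)>\xi_{t^-}(\omega)$ then by left-continuity of $v_{-}$ and $\xi_{-}$ one has $\lambda v_s>\xi_s$ for $s<t$ near $t$, so $t$ lies in some $(\theta,\tau^\lambda_\theta]$; a countable exhaustion of the left-open random set $\{\lambda v_{t^-}>\xi_{t^-}\}$ by such intervals, together with continuity of $A^c$, gives $\int_0^{\rho^T}\mathbf{1}_{\{\lambda v_{t^-}>\xi_{t^-}\}}\,dA^c_t=0$. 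Letting $\lambda\uparrow1$ along the rationals and using $\{v_{t^-}>\xi_{t^-}\}=\bigcup_{\lambda\in(0,1)\cap\mathbb{Q}}\{\lambda v_{t^-}>\xi_{t^-}\}$ (valid since $\xi\ge0$) yields (iv).
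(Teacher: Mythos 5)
Your proposal is correct in substance and, for parts (i), (ii) and (iv), follows essentially the paper's own route: $v\in\mathbf{S}^2$ via domination by the martingale $E[\eta\mid\mathcal{F}_\cdot]$ and Doob, Mertens decomposition for class (D) strong supermartingales, $\Delta B=-\Delta_+v$ combined with Remark \ref{rmkk}, and for (iv) the flatness of $A^c$ on $[\theta,\tau^\lambda_\theta]$ followed by a covering argument and $\lambda\uparrow 1$ (this is precisely the ``method of Lemma 3.3'' that the paper invokes by reference, so you have in fact supplied more detail than the paper does). Where you genuinely diverge is (iii): the paper deduces $A_\theta=A_{\tau^\lambda_\theta}$ from Theorem \ref{thmpro}(iv) and then cites El Karoui's Proposition 2.34 to localize the predictable jumps, whereas you compute $\Delta A_\tau=v_{\tau^-}-E[v_\tau\mid\mathcal{F}_{\tau^-}]$ by projecting the decomposition onto $\mathcal{F}_{\tau^-}$ and then prove the left-sided dynamic programming relation $v_{\tau^-}=\xi_{\tau^-}\vee E[v_\tau\mid\mathcal{F}_{\tau^-}]$ directly from the split-stopping-time structure. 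This is a self-contained and rather illuminating alternative: the inequality $v^+((\Omega,\tau))\le E[v_\tau\mid\mathcal{F}_{\tau^-}]$ does hold on $\{\tau<T\}$ for the simple reason that any $\rho=(H,\tau')>(\Omega,\tau)$ has $\tau'>\tau$ there, hence $\{\tau'=\tau\}=\emptyset$ and $\rho\ge(\emptyset,\tau)$, so $E[\xi_\rho\mid\mathcal{F}_{\tau^-}]=E[E[\xi_\rho\mid\mathcal{F}_\tau]\mid\mathcal{F}_{\tau^-}]\le E[v_\tau\mid\mathcal{F}_{\tau^-}]$. Your approach buys independence from the external reference at the cost of one boundary case; the paper's buys brevity at the cost of importing a result proved in a different (classical stopping-time) setting.

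Two caveats. First, in (i) the sentence deriving $E[A_T^2],E[B_T^2]<\infty$ ``from $A_T+B_{T^-}=v_0+M_T-v_T$ and $v\in\mathbf{S}^2$'' is circular as written, since $M_T\in L^2$ is part of what is being proved; what is actually needed (and what the paper cites) is the Dellacherie--Meyer energy inequality for a class (D) supermartingale dominated by a martingale closed by an $L^2$ variable, applied with the dominating variable $X=\esssup_{\rho}\vert\xi_\rho\vert$. Make sure ``the usual energy estimate'' refers to that statement and not to the displayed identity. Second, in (iii) the key inequality $v^+((\Omega,\tau))\le E[v_\tau\mid\mathcal{F}_{\tau^-}]$ genuinely fails as an automatic consequence of the order on $\{\tau=T\}$: there $\rho=(H,T)$ with $H\ne\emptyset$ belongs to $\mathcal{S}_{(\Omega,T)^+}$ but not to $\mathcal{S}_{(\emptyset,T)}$, so the chain of conditionings breaks down. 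You flag this, but the fix should be made explicit: on $\{\tau=T\}$ one computes directly $v_{T^-}=\esssup_{H}E[\xi_{T^-}\mathbf{1}_H+\xi_T\mathbf{1}_{H^c}\mid\mathcal{F}_{T^-}]=\xi_{T^-}\vee E[\xi_T\mid\mathcal{F}_{T^-}]$, which is exactly the desired relation since $v_T=\xi_T$. With these two points tightened, the argument is complete.
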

\begin{proof}
Let us prove the first point. By Theorem \ref{snellTh}, the value process $(v_t )_{t\in[0,T ]}$ is a strong supermartingale. Moreover, let $ \sigma \in \mathcal{T}_0$, from Remark \ref{rmk}, we have $\delta:=(\emptyset, \sigma)\in\mathcal{S}_0$, $v_\delta=v_\sigma$ and $\mathcal{F}_\delta\equiv \mathcal{F}_\sigma$. Thus, using Jensen's inequality yields 
\begin{align} \label{Eq6} 
\vert v_\sigma\vert \leq \esssup_{\rho \in\mathcal{S}_{\delta}} E[\vert \xi_\rho\vert \vert \mathcal{F}_\sigma] \leq E[\esssup_{\rho \in\mathcal{S}_{\delta}} \vert \xi_\rho\vert \vert \mathcal{F}_\sigma]= E[X \vert \mathcal{F}_\sigma] \quad \text{a.s. for all $\sigma \in \mathcal{T}_0$},
\end{align}
where $X$ is $\esssup\limits_{\rho \in\mathcal{S}_{\delta}} \vert \xi_\rho\vert$. 
By using Eq. (\ref{Eq5}), we get 
\begin{align}
E[X^2]= E\Big[\esssup\limits_{\rho \in\mathcal{S}_{\delta}} \vert \xi_\rho \vert^2\Big]&=  E\Big[\esssup\limits_{\substack{\rho \in\mathcal{S}_{\delta} \nonumber \\ \rho:=(H,\tau)}}\, (\vert \xi_{\tau^-} \vert^2 \mathbf{1}_{H} + \vert \xi_{\tau} \vert^2 \mathbf{1}_{H^c}) \Big] \\
&\leq E\Big[\esssup\limits_{\substack{\rho \in\mathcal{S}_{\delta} \nonumber \\ \rho:=(H,\tau)}} \, (\esssup\limits_{\theta \in \mathcal{T}_0} \vert \xi_{\theta^-} \vert^2  \mathbf{1}_{H} + \esssup\limits_{\theta \in \mathcal{T}_0} \vert \xi_{\theta} \vert^2 \mathbf{1}_{H^c} ) \Big] \\ \leq  \Vert \xi \Vert^2_{\mathcal{S}^2}. \label{Eq7}
\end{align}
From Eq. (\ref{Eq6}), we have 
\begin{equation*}
    E[\esssup\limits_{\sigma \in\mathcal{T}_0}\vert v_\sigma\vert^2]\leq E[\esssup\limits_{\sigma \in\mathcal{T}_0} \vert E[X \vert \mathcal{F}_\sigma]\vert^2 ]= E[\esssup\limits_{t\in [0,T]} \vert E[X \vert \mathcal{F}_t]\vert^2 ],
\end{equation*}
where the last equality follows from the right-continuity of the martingale $(E[X\vert \mathcal{F}_t])_{t \in [0,T]}$.
Using Doob's martingale inequalities in $L^2$, we obtain 
\begin{equation*}
    E[\esssup\limits_{\sigma \in\mathcal{T}_0} \vert v_\sigma \vert^2] \leq 4 E[X^2] \leq 4\Vert \xi \Vert^2_{\mathcal{S}^2}< \infty.
\end{equation*}
Therefore, the process $(v_t )_{t\in[0,T ]}$ is a strong supermartingale in $\mathcal{S}^2$ (thus, of class $(D)$), by Mertens decomposition for
strong supermartingales of class $(D)$ (see for example equalities (20.2) of Theorem 20 in \cite{Probabilites et Potentiel2}), there exists a unique uniformly integrable cadlag martingale $(M_t)$, a unique predictable right continuous nondecreasing process $(A_t)$ with $A_0 = 0$ and $E[A_T] < \infty$ and a unique right continuous adapted nondecreasing process $(B_t)$, which is purely discontinuous with $B_{0^-} = 0$ and $E[B_T] < \infty$, such that
 \begin{equation}
v_t= v_0 + M_t- A_t - B_{t^-}, \quad \text{for all} \,\, t\in[0,T]\quad \text{a.s.}, 
\end{equation}
which gives for each $\delta\in\mathcal{S}_0$, $v_\delta= v_0 + M_\delta- A_\delta - B_{\delta^-}$ a.s. What is left is to show that $A_T$, $B_T$ $\in L^2$. By Eq. (\ref{Eq6}), we have for every $\sigma\in\mathcal{T}_0$, $\vert v_\sigma \vert \leq E[X\vert \mathcal{F}_\sigma]$ a.s., where $X=\esssup\limits_{\rho \in\mathcal{S}_{\delta}} \vert \xi_\rho\vert$ is a non-negative $\mathcal{F}_T$-measurable random variable. From this, Eq. (\ref{Eq7}) and a fine result of Dellacherie-Meyer (cf., e.g., \cite[Corollary A.1]{Reflected BSDEs when the obstacle is not right-continuous and optimal stopping}), yields $E(A_T + B_{T^-}) \leq cE[X^2]\leq c \Vert \xi \Vert^2_{\mathcal{S}^2}$, where $c > 0$. We conclude that $A_T$ and $B_T(= B_{T^-})$ are square integrable, which gives the statement (i).\\ By Eq. (\ref{Eq8}), we have for each $\delta \in \mathcal{S}_0$, $\Delta B_\delta = - \Delta_+ v_\delta$ a.s.  As from  Remark \ref{rmkk}, $\Delta_+ v_\delta = \mathbf{1}_{\lbrace v_\delta = \xi_\delta \rbrace} \Delta_+ v_\delta $ a.s., the second point clearly holds.\\
Let us show assertion (iii). Observe that, for each $\delta=(G, \sigma) \in\mathcal{S}^p_0$, $\Delta A_\delta= \Delta A_\sigma \mathbf{1}_{G^c}$, hence, it suffices to show that for each $\theta\in\mathcal{T}^p_0$,  \begin{equation} \label{Eq9}
\Delta A_\theta= \mathbf{1}_{ \lbrace v_{\theta^-}= \xi_{\theta^-} \rbrace} \Delta A_\theta. 
\end{equation}
 Theorem \ref{thmpro} (iv) together with Mertens decomposition (\ref{Eq8}), gives $A_\theta=A_{\tau^\lambda_\theta}$ a.s. for each $\theta\in\mathcal{T}_0$, and for each $\lambda\in ]0,1[$. From this and a result of El Karoui (\cite[Proposition 2.34]{El Karoui}), Eq. (\ref{Eq9}) follows, which yields the third assertion. \\
  Based on Theorem \ref{thmpro} (iv) and on some analytic arguments, the proof of (iv) follows by the same method as in the proof of Lemma 3.3 \cite{Optimal stopping with}. 
\end{proof}
\begin{remark} 
\begin{enumerate}
\item In the case of a not necessarily non-negative reward process $\xi \in \mathcal{S}^2$, we define $X_\delta:= E[ \esssup\limits_{\rho=(H,\tau) \in \mathcal{S}_0}  \xi^{-}_\rho \vert \mathcal{F}_\delta]$ and $\bar{\xi}_\delta:= \xi_\delta + X_\delta$. The reward process $\bar{\xi}$ is non-negative and is associated with the value family $\bar{v}:= v+ X$. The results of this section thus hold for $\xi$ up to a translation by the martingale $X$ (cf. e.g., \cite[Appendix A]{KobylanskiQuenezRoger}).
\item Section \ref{sect3.1} can be extended to the setup of reward families indexed by split stopping times, i.e. $(\xi(\delta), \delta \in\mathcal{S}_0)$, such a generalization has been studied by Kobylanski and Quenez  \cite{KobylanskiQuenez} in the framework of stopping times. %Therefore, we can abandon the condition of existence of left and right limits on the reward process $\xi$ and the results still be valid.  
\end{enumerate} 
\end{remark} 
\subsection{Existence and uniqueness of the solution of Reflected BSDEs with a ladlag obstacle} \label{sub 3.2}

\hspace*{0.5 cm} Let $\rho^T=(\emptyset, T)$. In this section, we prove existence and uniqueness of the solution to the RBSDE from Definition \ref{def2} (in the case where the driver $g$ does not depend on $y$, $z$). We characterize the first component of the solution as the value process of an optimal stopping problem over split stopping times. To this end, note first that we have the following a priori estimates.

\begin{lemma} \label{a prio}
 Let $(Y,Z,M,A,B)$ (resp. $(\bar{Y}, \bar{Z}, \bar{M}, \bar{A}, \bar{B})) \in \mathbf{S}^{2} \times \mathbf{H}^2 \times \mathbf{M}^{2,\perp} \times (\mathbf{S}^{2}) \times (\mathbf{S}^{2})$ be a solution to the RBSDE associated with driver $g=(g_t) \in \mathbf{H}^{2}$ (resp. $\bar{g}=(\bar{g}_t) \in \mathbf{H}^{2}$) and with a ladlag obstacle $\xi$. Then, there exists $c>0$ such that for all $\epsilon \geq 0$, for all $\beta >\frac{1}{\epsilon^2}$, we have 
\begin{equation}\label{l1}
\Vert Z- \bar{Z} \Vert^2_{\beta} + \Vert M- \bar{M} \Vert^2_{\mathbf{M}^2_\beta} \leq \epsilon^2 \Vert g- \bar{g} \Vert^2_{\beta}; \\
\end{equation}
\begin{equation} \label{l11}
\vertiii{Y- \bar{Y}}^2_{\beta} \leq 4\epsilon^2(1 + 12c^2) \Vert g- \bar{g} \Vert^2_{\beta}.
\end{equation} 
\end{lemma}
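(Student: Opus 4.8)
The plan is to derive both estimates from a single application of the change-of-variables formula to the difference process, using the reflection conditions to discard the finite-variation block. Set $\hat Y := Y-\bar Y$, $\hat Z := Z-\bar Z$, $\hat M := M-\bar M$, $\hat A := A-\bar A$, $\hat B := B-\bar B$ and $\hat g := g-\bar g$. Subtracting the two equations \eqref{RBSDE*} (written with $\rho^T=(\emptyset,T)$, so that $\xi_{\rho^T}=\xi_T$) gives, a.s. for all $t$,
$$\hat Y_t = \int_t^T \hat g_s\, ds - \int_t^T \hat Z_s\, dW_s - (\hat M_T-\hat M_t) + (\hat A_T-\hat A_t) + (\hat B_{T^-}-\hat B_{t^-}),$$
with terminal value $\hat Y_T=0$ because the two solutions share the same obstacle $\xi$.

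First I would apply the Gal'chouk--Lenglart change-of-variables formula to the ladlag strong semimartingale $s\mapsto e^{\beta s}|\hat Y_s|^2$ between a stopping time $\tau$ and $T$. Since $\hat Y_T=0$, isolating the nonnegative quantities $e^{\beta\tau}|\hat Y_\tau|^2$, $\beta\int_\tau^T e^{\beta s}|\hat Y_s|^2\,ds$, $\int_\tau^T e^{\beta s}|\hat Z_s|^2\,ds$ and $\int_\tau^T e^{\beta s}\,d[\hat M]_s$ on the left produces an identity whose right-hand side consists of: the driver term $2\int_\tau^T e^{\beta s}\hat Y_{s^-}\hat g_s\,ds$; stochastic integrals against $dW$ and $d\hat M$, which are true martingales and vanish under expectation; and the reflection contributions coming from $\hat A$ (its continuous part and its predictable jumps) and from $\hat B$ (its right jumps), the nonnegative jump squares being kept on the left.

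The crux is that each reflection contribution is nonpositive. I would check this from the minimality conditions \eqref{sko1'}--\eqref{sko2'}: since $dA^{c}$, $dA^{d}$ charge only $\{Y_{\cdot^-}=\xi_{\cdot^-}\}$ and $\bar Y\ge\xi$, on the support of $dA$ one has $\hat Y_{s^-}=\xi_{s^-}-\bar Y_{s^-}\le 0$, while on the support of $d\bar A$ one has $-\hat Y_{s^-}=\xi_{s^-}-Y_{s^-}\le 0$, so that $\int e^{\beta s}\hat Y_{s^-}\,d\hat A_s\le 0$; the same argument at the value $\hat Y_s$, using that $dB$ charges only $\{Y=\xi\}$, gives $\int e^{\beta s}\hat Y_s\,d\hat B_s\le 0$. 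Dropping these terms and taking expectation yields the key inequality
$$E[e^{\beta\tau}|\hat Y_\tau|^2]+\beta E\Big[\int_\tau^T e^{\beta s}|\hat Y_s|^2\, ds\Big]+E\Big[\int_\tau^T e^{\beta s}|\hat Z_s|^2\, ds\Big]+E\Big[\int_\tau^T e^{\beta s}\, d[\hat M]_s\Big]\le 2\,E\Big[\int_\tau^T e^{\beta s}\hat Y_{s^-}\hat g_s\, ds\Big].$$
Taking $\tau=0$ and bounding $2\hat Y_{s^-}\hat g_s\le \tfrac{1}{\epsilon^2}|\hat Y_{s^-}|^2+\epsilon^2|\hat g_s|^2$ by Young's inequality, the hypothesis $\beta>1/\epsilon^2$ lets the $|\hat Y|^2$-term be absorbed by the $\beta$-term ($\hat Y_{s^-}=\hat Y_s$ for $ds$-a.e. $s$), which gives \eqref{l1}.

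For \eqref{l11} I would return to the same inequality but keep the two martingale integrals: discarding the nonnegative $\beta$-, $Z$- and $M$-terms on the left (the reflection terms being already removed) gives, for every $\tau\in\mathcal T_0$, an upper bound on $e^{\beta\tau}|\hat Y_\tau|^2$ by $2\int_\tau^T e^{\beta s}\hat Y_{s^-}\hat g_s\,ds$ minus the two stochastic integrals. Taking the essential supremum over $\tau\in\mathcal T_0$ and then expectation, I would control the two martingale suprema by the Burkholder--Davis--Gundy inequality with constant $c$, dominate each bracket by $\esssup_{s} e^{\beta s}|\hat Y_{s^-}|^2$ times $\int_0^T e^{\beta s}|\hat Z_s|^2\,ds$ (resp. $d[\hat M]_s$), and use Young's inequality to move a small multiple of $\vertiii{\hat Y}^2_{\beta}$ back to the left; inserting \eqref{l1} to replace the $Z$- and $M$-brackets by $\epsilon^2\Vert\hat g\Vert^2_{\beta}$ then delivers \eqref{l11} with the stated constant. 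The main obstacle is the bookkeeping in the Gal'chouk--Lenglart expansion for a genuinely ladlag $\hat Y$: one must verify that every predictable jump of $\hat A$ and every right jump of $\hat B$ enters with the sign dictated by \eqref{sko1'}--\eqref{sko2'}, so that the entire finite-variation block can legitimately be dropped.
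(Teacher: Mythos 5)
Your proposal is correct and follows essentially the same route as the paper, whose own proof is only a one-line reference to Gal'chouk--Lenglart's formula and the reasoning of Lemma 3.7 in Grigorova et al. (2020); your write-up simply spells out that argument (change of variables applied to $e^{\beta s}|\hat Y_s|^2$, sign analysis of the reflection terms via the Skorokhod conditions, Young's inequality with $\beta>1/\epsilon^2$, and Burkholder--Davis--Gundy for the supremum estimate). The sign bookkeeping you flag as the main obstacle does work out as you describe, since both solutions dominate the same obstacle $\xi$.
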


\begin{proof} 
The proof relies on Gal'chouk-Lenglart's formula (cf. Theorem \cite{Gal'chouk}) and employs the same reasoning as that used in the proof of Lemma 3.7 in \cite{Optimal stopping with}.
\end{proof}

We assume that $g$ does not depend on $(y,z)$ i.e., $P$-a.s., $g(t, \omega, y, z) \equiv g (t, \omega)$, for any $t, y$ and $z$. Using these a priori estimates, the lemmas from the previous subsection, and the orthogonal martingale decomposition (see Lemma \ref{orthogonal decomposition}), we prove existence and uniqueness of the solution to the
RBSDE from Definition \ref{def2}. 

\begin{theorem} \label{existence RBSDE}
Let $\xi$ be a ladlag process in  $\mathbf{S}^{2}$ and $g=(g_t)$ a driver process in $\mathbf{H}^2$. Then,
the RBSDE from Definition \ref{def2} admits a unique solution
$(Y,Z,M,A,B) \in  \mathbf{S}^{2} \times \mathbf{H}^2 \times \mathbf{M}^{2,\perp} \times (\mathbf{S}^{2})^2$, and for each 
$\delta=(G,\sigma) \in \mathcal{S}_0$, we have 
\begin{equation}
Y_\delta= \esssup_{\rho=(H,\tau) \in \mathcal{S}_{\delta}} E[ \xi_\rho + \int_\delta^\rho g(t) dt \vert \mathcal{F}_\delta]\quad \text{a.s.}
\end{equation}
Moreover, the following property holds 
\begin{equation}
Y_\delta = \xi_\delta \vee Y_{\delta^+} \,\,\, \text{a.s.}
\end{equation}
\end{theorem}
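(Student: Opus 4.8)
The plan is to reduce the equation to the Snell envelope theory of Subsection \ref{sect3.1} by absorbing the running reward into the obstacle. Since $g$ does not depend on $(y,z)$, I would set $\bar\xi_t := \xi_t + \int_0^t g(s)\,ds$ and consider the value family $\bar v(\delta) := \esssup_{\rho \in \mathcal{S}_\delta} E[\bar\xi_\rho \mid \mathcal{F}_\delta]$. Because $\int_0^\delta g\,ds$ is $\mathcal{F}_\delta$-measurable and the process $\int_0^\cdot g\,ds$ is continuous (so its evaluation at a split stopping time $\rho=(H,\tau)$ reduces to $\int_0^\tau g\,ds$), one has $\bar v(\delta) - \int_0^\delta g\,ds = \esssup_{\rho\in\mathcal{S}_\delta} E[\xi_\rho + \int_\delta^\rho g\,ds \mid \mathcal{F}_\delta]$, which is precisely the candidate for $Y_\delta$. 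The process $\bar\xi$ is ladlag, and since $g\in\mathbf{H}^2$ gives $\int_0^T |g|\,ds\in L^2$ by Cauchy--Schwarz, it lies in $\mathbf{S}^2$; after the translation by a martingale described in the remark following Lemma \ref{mertens} one may assume it non-negative, so Theorem \ref{snellTh} applies.

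First I would invoke Theorem \ref{snellTh} to aggregate $\bar v$ by an optional strong supermartingale $(\bar v_t)$ and set $Y_t := \bar v_t - \int_0^t g\,ds$, which defines $Y$ on all of $\mathcal{S}_0$, yields the optimal stopping representation at once, and via Theorem \ref{thmpro}(iii) gives $Y_\delta = \xi_\delta \vee Y_{\delta^+}$ a.s. (the continuous term $\int_0^\cdot g\,ds$ cancelling between $\bar v_\delta$ and $\bar v_{\delta^+}$). Next I would apply Lemma \ref{mertens}(i) to obtain the Mertens decomposition $\bar v_\delta = \bar v_0 + \bar M_\delta - A_\delta - B_{\delta^-}$ with $\bar M\in\mathbf{M}^2$ uniformly integrable and $A,B$ nondecreasing with square-integrable terminal values, and then use Lemma \ref{orthogonal decomposition} to write $\bar M_t = \int_0^t Z_s\,dW_s + M_t$ with $Z\in\mathbf{H}^2$ and $M\in\mathbf{M}^{2,\perp}$. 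Rearranging the decomposition between $\delta$ and $\rho^T$ and substituting $\bar v_{\rho^T} = \xi_{\rho^T} + \int_0^{\rho^T} g\,ds$ reproduces exactly equation \eqref{RBSDE}, giving a candidate $(Y,Z,M,A,B)$ in the required spaces.

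The step requiring care is the verification of the minimality conditions \eqref{sko1'} and \eqref{sko2'}. The key observation is that $Y_\delta - \xi_\delta = \bar v_\delta - \bar\xi_\delta$ and $Y_{\delta^-} - \xi_{\delta^-} = \bar v_{\delta^-} - \bar\xi_{\delta^-}$, since $\xi$ and $\bar\xi$ differ by the continuous process $\int_0^\cdot g\,ds$; consequently $\{Y_\delta = \xi_\delta\} = \{\bar v_\delta = \bar\xi_\delta\}$ and $\{Y_{\delta^-} = \xi_{\delta^-}\} = \{\bar v_{\delta^-} = \bar\xi_{\delta^-}\}$. The Skorohod conditions then follow term by term from parts (ii), (iii) and (iv) of Lemma \ref{mertens} applied to $\bar v$, using the rewriting in Remark \ref{reee} for the jump parts of $A$ and $B$. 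I expect this translation of the Skorohod conditions, together with checking that the non-negativity reduction leaves the events $\{Y_\delta=\xi_\delta\}$ undisturbed, to be the main obstacle.

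Finally, for uniqueness I would argue from the a priori estimates. Given two solutions with the common driver $g=\bar g$, Lemma \ref{a prio} and \eqref{l11} force $\vertiii{Y-\bar Y}^2_\beta = 0$, whence $Y=\bar Y$, and then \eqref{l1} yields $Z=\bar Z$ and $M=\bar M$. Substituting back into \eqref{RBSDE} shows that the finite-variation parts coincide, $A_\delta + B_{\delta^-} = \bar A_\delta + \bar B_{\delta^-}$ for all $\delta\in\mathcal{S}_0$, and the uniqueness of the Mertens decomposition (the predictable right-continuous part against the purely discontinuous part) from Lemma \ref{mertens}(i) then gives $A=\bar A$ and $B=\bar B$, which completes the proof.
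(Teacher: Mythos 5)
Your proposal follows essentially the same route as the paper's own proof: reduce to the Snell envelope over split stopping times by absorbing $\int_0^\cdot g(s)\,ds$ into the obstacle, aggregate via Theorem \ref{snellTh}, obtain $Y_\delta=\xi_\delta\vee Y_{\delta^+}$ from Theorem \ref{thmpro}(iii), read off the Mertens decomposition and Skorohod conditions from Lemma \ref{mertens}, decompose the martingale via Lemma \ref{orthogonal decomposition}, and get uniqueness from the a priori estimates of Lemma \ref{a prio} together with uniqueness of the Mertens and martingale-representation decompositions. The only difference is cosmetic (you work with $\bar v=Y+\int_0^\cdot g\,ds$ explicitly and are slightly more careful about the non-negativity reduction and the invariance of the contact sets under the continuous shift), so the argument is correct and matches the paper.
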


\begin{proof} For each $\delta=(G,\sigma) \in \mathcal{S}_0$, we define the value $Y(\delta)$ at the split stopping time $\delta$ by
\begin{equation}
Y(\delta) := \esssup_{\rho=(H,\tau) \in \mathcal{S}_{\delta}} E[ \xi_\rho + \int_\delta^\rho g(t) dt \vert \mathcal{F}_\delta].
\end{equation}
This is equivalent to 
\begin{equation}
Y(\delta) + \int_0^\delta g(t) dt := \esssup_{\rho=(H,\tau) \in \mathcal{S}_{\delta}} E[ \xi_\rho + \int_0^\rho g(t) dt \vert \mathcal{F}_\delta].
\end{equation}
Thus, Theorem \ref{snellTh} holds with $\xi$ replaced by $\xi_. + \int_0^. g(t) dt$ and $v(\delta)$ by $Y(\delta)+ \int_0^\delta g(t) dt$. Then, there exists a ladlag optional process $(Y_t)_{t \in [0,T]}$ which aggregates the family\\ $(Y(\delta))_{\delta \in \mathcal{S}_0}$, that is,
\begin{equation}
Y_\delta = Y(\delta) \,\, \text{a.s for all}\,\,\, \delta \in \mathcal{S}_0.
\end{equation}
Hence, $Y_\delta \geq \xi_\delta$ a.s. for all $\delta \in \mathcal{S}_0$. For each $\rho \in \mathcal{S}_{\rho^T}$, we have $\rho=\rho^T$, consequently $Y_{\rho^T} = Y(\rho^T) = \xi_{\rho^T}$.
Moreover, the process $(Y_t + \int_0^t g(t) dt)_{t\in [0,T]}$ is the smallest strong supermartingale greater than or equal to $(\xi_t + \int_0^t g(t) dt)_{t\in [0,T]}$, and from (iii) of Theorem \ref{thmpro}, we have 
$$ Y_\delta = \xi_\delta \vee Y_{\delta^+} \,\,\, \text{a.s.}$$
Next, from Lemma \ref{mertens}, $Y$ belongs to $\mathbf{S}^2$ and admits the following Mertens decomposition:
\begin{equation} \label{Yoo}
Y_\delta= Y_0 - \int_0^\delta g(s) ds + M_\delta - A_\delta -B_{\delta^-}, \quad \text{a.s.} \quad \text{for all} \,\, \delta\in\mathcal{S}_0.
\end{equation}
where $M\in\mathbf{M}^2$, $A$ is a nondecreasing right-continuous predictable process such that
$A_0 = 0$, $E(A_{\rho^T}^2)= E(A_T^2 ) <  \infty$, and $B$ is a nondecreasing right-continuous adapted purely
discontinuous process such that $B_{0^-} = 0$, $E(B_{\rho^T}^2)=E(B_T^2) < \infty$.
Furthermore,
\begin{enumerate}
\item[(i)] For each $\delta \in \mathcal{S}_0$, we have $\Delta B_\delta = \mathbf{1_{\lbrace Y_\delta =\xi_\delta \rbrace}} \Delta B_\delta$ a.s.,
\item[(ii)] For each $\delta \in \mathcal{S}^p_0$, we have $\Delta A_\delta = \mathbf{1_{\lbrace Y_{\delta^-} =\xi_{\delta^-} \rbrace}} \Delta A_\delta$ a.s.,
\item[(iii)] The continuous part $A^c$ of $A$ satisfies the equality $\int_0^{\rho^T} \mathbf{1}_{\lbrace Y_{t^-} > \xi_{t^-}\rbrace} dA^c_t =0$ a.s. 
\end{enumerate}
 From (i), the process $B$ satisfies the minimality condition \ref{sko2'}, and from (ii) and (iii), the process $A$ satisfies condition \ref{sko1'}. 
Finally, from Lemma \ref{orthogonal decomposition}, there exists a unique couple $(Z,N)\in \mathbf{H}^2 \times \mathbf{M}^{2,\bot} $ such that
\begin{equation}
M_t= \int_0^t Z_s dW_s + N_t, \,\,\,\,  0\leq t\leq T \,\,\,\, a.s.
\end{equation}
From this together with (\ref{Yoo}) we get, for each $\delta \in \mathcal{S}_0$
\begin{align*}
Y_\delta=\xi_{\rho^T} + \int_\delta^{\rho^T}  g(s) ds &- \int_\delta^{\rho^T} Z_s dW_s - (N_{\rho^T}-N_{\delta}) +  A_{\rho^T}-A_\delta \\
&+ B_{{\rho^T}^-}-B_{\delta^-} \,\,\, \text{a.s}. 
\end{align*}
The process $(Y,Z,N,A,B)$ is thus a solution of the RBSDE (\ref{def2}) associated with the driver process $g(t)$ and the obstacle $\xi$. The uniqueness of $A$, $B$, $Z$ and $N$ follows from the uniqueness of Mertens decomposition and from
the uniqueness of the martingale representation (Lemma \ref{orthogonal decomposition}). If $(\bar{Y},Z,N,A,B)$ is a solution of the RBSDE with driver $g$ and obstacle $\xi$, by inequality (\ref{l11}) of Lemma \ref{a prio}, we obtain $\bar{Y}= Y$ in $\mathcal{S}^2$. The proof of the Theorem is thus complete. \\
\end{proof}

Let now $g$ be a general Lipschitz driver. Using Theorem \ref{existence RBSDE} together with the a priori estimates from Lemma \ref{a prio}, we derive the following existence and uniqueness result in the case of a general Lipschitz driver $g(t,y,z)$.

\begin{theorem} \label{uniqueness}
Let $\xi$ be a ladlag process in $\mathbf{S}^2$ and let $g$ be a Lipschitz driver in $\mathbf{H}^2$. The RBSDE associated with the parameters $(g,\xi)$ from Definition \ref{def2} admits a unique solution $(Y,Z,M,A,B) \in  \mathbf{S}^{2} \times \mathbf{H}^2 \times \mathbf{M}^{2,\perp} \times (\mathbf{S}^{2})^2$.
\end{theorem}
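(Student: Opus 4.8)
The plan is to run a Banach fixed-point argument on the space $\mathbf{K}^2_\beta = \mathbf{S}^2 \times \mathbf{H}^2$, using Theorem \ref{existence RBSDE} to solve the driver-frozen subproblems and Lemma \ref{a prio} to produce the contraction. First I would define a map $\Phi : \mathbf{K}^2_\beta \to \mathbf{K}^2_\beta$ as follows. Given $(U,V) \in \mathbf{S}^2 \times \mathbf{H}^2$, consider the frozen driver process $g^{U,V}(t) := g(t, U_t, V_t)$. Since $g$ is a Lipschitz driver and $(U,V) \in \mathbf{S}^2 \times \mathbf{H}^2$, the bound $\vert g^{U,V}(t)\vert \leq \vert g(t,0,0)\vert + K(\vert U_t\vert + \vert V_t\vert)$ shows $g^{U,V} \in \mathbf{H}^2$, so it is an admissible driver not depending on $(y,z)$. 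By Theorem \ref{existence RBSDE}, the RBSDE with driver $g^{U,V}$ and obstacle $\xi$ has a unique solution $(Y,Z,M,A,B)$, and I set $\Phi(U,V) := (Y,Z)$.

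Next I would show that $\Phi$ is a contraction for a suitable choice of parameters. Let $(U,V),(U',V') \in \mathbf{K}^2_\beta$ with images $(Y,Z)=\Phi(U,V)$ and $(Y',Z')=\Phi(U',V')$. Applying the a priori estimates of Lemma \ref{a prio} to the two frozen drivers $g^{U,V}$ and $g^{U',V'}$ gives, for every $\epsilon \geq 0$ and $\beta > 1/\epsilon^2$,
$$\Vert Z - Z'\Vert^2_{\beta} \leq \epsilon^2 \Vert g^{U,V} - g^{U',V'}\Vert^2_{\beta}, \qquad \vertiii{Y - Y'}^2_{\beta} \leq 4\epsilon^2(1+12c^2)\,\Vert g^{U,V} - g^{U',V'}\Vert^2_{\beta}.$$
The Lipschitz property yields $\vert g^{U,V}(t) - g^{U',V'}(t)\vert^2 \leq 2K^2(\vert U_t - U'_t\vert^2 + \vert V_t - V'_t\vert^2)$, and bounding $e^{\beta s}\vert U_s - U'_s\vert^2 \leq \esssup_{\tau}e^{\beta\tau}\vert U_\tau - U'_\tau\vert^2$ before integrating in $s$ gives
$$\Vert g^{U,V} - g^{U',V'}\Vert^2_{\beta} \leq 2K^2\big(T\,\vertiii{U - U'}^2_{\beta} + \Vert V - V'\Vert^2_{\beta}\big) \leq 2K^2\max(T,1)\,\Vert (U-U',V-V')\Vert^2_{\mathbf{K}^2_\beta}.$$
Summing the two bounds, I obtain $\Vert \Phi(U,V) - \Phi(U',V')\Vert^2_{\mathbf{K}^2_\beta} \leq C\epsilon^2\,\Vert (U-U',V-V')\Vert^2_{\mathbf{K}^2_\beta}$ with $C := 2K^2\max(T,1)\,[\,1 + 4(1+12c^2)\,]$, a constant independent of $\beta$ and $\epsilon$.

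The choice of constants is then immediate: fix $\epsilon>0$ small enough that $C\epsilon^2<1$, and pick any $\beta > 1/\epsilon^2$. For this $\beta$, the map $\Phi$ is a contraction on the Banach space $(\mathbf{K}^2_\beta, \Vert\cdot\Vert_{\mathbf{K}^2_\beta})$, so it has a unique fixed point $(Y,Z)$. The solution $(Y,Z,M,A,B)$ attached to the frozen driver $g(t,Y_t,Z_t)$ is, by construction, a solution of the RBSDE with the general driver $g$, which proves existence. For uniqueness, any solution $(\tilde Y,\tilde Z,\tilde M,\tilde A,\tilde B)$ of the RBSDE with driver $g$ also solves the RBSDE with the frozen driver $g(t,\tilde Y_t,\tilde Z_t)\in\mathbf{H}^2$, so $(\tilde Y,\tilde Z)$ is a fixed point of $\Phi$ and thus coincides with $(Y,Z)$; the remaining components then agree with $M,A,B$ by the uniqueness part of Theorem \ref{existence RBSDE} (uniqueness of the Mertens decomposition and of the orthogonal martingale representation). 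The main obstacle is the contraction estimate itself: one must ensure that the passage from the a priori bounds of Lemma \ref{a prio} to a genuine contraction on $\mathbf{K}^2_\beta$ keeps the constant $C$ free of $\beta$, so that the single parameter $\epsilon$ can be driven small while respecting $\beta>1/\epsilon^2$; the exponential weighting $e^{\beta s}$ is exactly what decouples these two requirements.
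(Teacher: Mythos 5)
Your proposal is correct and follows essentially the same route as the paper: the same Picard/Banach fixed-point map $\Phi$ on $\mathbf{K}^2_\beta$ built from Theorem \ref{existence RBSDE}, the same use of the a priori estimates of Lemma \ref{a prio} combined with the Lipschitz property and the Fubini bound $\Vert U-U'\Vert^2_\beta \le T\,\vertiii{U-U'}^2_\beta$ to obtain a contraction for small $\epsilon$ and $\beta>1/\epsilon^2$. Your added checks (that the frozen driver lies in $\mathbf{H}^2$, and the explicit argument that any solution is a fixed point of $\Phi$) are welcome details that the paper leaves implicit.
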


\begin{proof}
We introduce a mapping $\Phi$ from $\mathbf{K}^2_\beta$ into itself. This map is defined by: for a given $(U,V)\in \mathbf{K}^2_\beta$, $\Phi(U,V) := (Y,Z)$, where $Y$, $Z$ are the first two components of the solution $(Y,Z,M,A,B)$ to the reflected BSDE associated with driver $g_t:= g(t,U_t,V_t)$ and with the obstacle $\xi$.
 By Theorem \ref{existence RBSDE}, the mapping $\Phi$ is well-defined. \\
\vspace*{0.5 cm}  Consider $(U,V)$ and $(\bar{U},\bar{V})$ two elements of $\mathbf{K}^2_\beta$. We set $(Y,Z)= \Phi(U,V)$, $(\bar{Y}, \bar{Z})= \Phi(\bar{U},\bar{V})$, $\tilde{Y}:= Y - \bar{Y}$, $\tilde{Z}:= Z - \bar{Z}$, $\tilde{U}:= U - \bar{U}$ and $\tilde{V}:= V - \bar{V}$. Hence, Lemma \ref{a prio} shows that 
 \begin{equation}
 \vertiii{\tilde{Y}}^2_\beta + \Vert \tilde{Z} \Vert^2_{\beta} \leq 6\epsilon^2(1 + 8c^2) \Vert g(t, U_t,V_t)- g(t,\bar{U}_t, \bar{V}_t) \Vert^2_{\beta},
 \end{equation}
 for all $\epsilon>0$ and $\beta \geq \frac{1}{\epsilon^2}$. 
 By using the Lipschitz property of $g$, we obtain
 \begin{align}\label{m1}
  \vertiii{\tilde{Y}}^2_\beta + \Vert \tilde{Z} \Vert^2_{\beta}  \leq 6\epsilon^2 M (1+ 8 c^2) \Big[\Vert \tilde{U} \Vert^2_{\beta} + \Vert \tilde{V} \Vert^2_{\beta} \Big],
 \end{align}
 where $M$ is a positive constant depending only on the Lipschitz constant $K$. 
Using Fubini's theorem, we get $\Vert \tilde{U} \Vert^2_{\beta} \leq T \vertiii{\tilde{U}}^2_\beta$. 
This, combined with  (\ref{m1}), gives 
\begin{align}
  \vertiii{\tilde{Y}}^2_\beta + \Vert \tilde{Z} \Vert^2_{\beta}  \leq 6\epsilon^2 M(1+T)(1+ 8 c^2)  \Big[ \vertiii{\tilde{U}}^2_\beta + \Vert \tilde{V} \Vert^2_{\beta} \Big].
 \end{align}
Consequently, by choosing $\epsilon>0$ such that $6\epsilon^2 M(1+T)(1+ 8 c^2)<1$ and $\beta$ such that $\beta \geq \frac{1}{\epsilon^2}$, we deduce that the mapping $\Phi$ is a contraction, and hence by the Banach fixed-point theorem, $\Phi$ admits a unique fixed point $(Y,Z) \in \mathbf{K}_\beta^2$. The process $(Y,Z)$ is equal to the first two components of the solution $(Y,Z,M,A,B)$ to the RBSDE associated with the driver process $h(\omega,t):= g(\omega,t,Y_t(\omega), Z_t(\omega))$ and with the barrier $\xi$. Thus, $(Y,Z,M,A,B)$ is the unique solution to the RBSDE  with parameters $(g,\xi)$, which completes the proof. \\
\end{proof}

We introduce the following operator: 

\begin{definition}(\textbf{Operator induced by a RBSDE with driver 0})\\ \label{operator}
Let $\xi$ be a ladlag process in  $\mathbf{S}^{2}$. We denote by $\mathcal{R}ef[\xi]$ the first component of the solution to the reflected BSDE from Definition \ref{def2} in the case where the driver is $0$. Note that by Proposition \ref{existence RBSDE}, the operator $\mathcal{R}ef : \xi \rightarrow \mathcal{R}ef[\xi]$ is well defined on $\mathbf{S}^{2}$.
\end{definition}

 Here are some elementary properties of this operator.
 
\begin{lemma} \label{incr opera}
The operator $\mathcal{R}ef$ is nondecreasing, i.e. for $\xi, \xi' \in \mathbf{S}^{2}$ such that $\xi \leq \xi'$ we have $\mathcal{R}ef[\xi] \leq \mathcal{R}ef[\xi']$. Further, for each $\xi \in \mathbf{S}^{2}$, $\mathcal{R}ef[\xi]$ is a strong supermartingale and satisfies $\mathcal{R}ef[\xi] \geq \xi$. If moreover $\xi$ is a strong supermartingale, then $\mathcal{R}ef[\xi]=\xi$. 
\end{lemma}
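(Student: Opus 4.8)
The plan is to reduce all three assertions to the optimal-stopping representation of $\mathcal{R}ef[\xi]$. By Theorem~\ref{existence RBSDE} applied with the null driver $g\equiv 0$, the first component of the solution to the reflected BSDE of Definition~\ref{def2} is given, for every $\delta\in\mathcal{S}_0$, by
$$\mathcal{R}ef[\xi]_\delta=\esssup_{\rho=(H,\tau)\in\mathcal{S}_\delta}E[\xi_\rho\vert\mathcal{F}_\delta]=v(\delta)\quad\text{a.s.,}$$
so that $\mathcal{R}ef[\xi]$ is exactly the value process $v$ of the optimal stopping problem (\ref{v}); by Theorem~\ref{snellTh} it is the Snell envelope of $\xi$ over split stopping times, i.e.\ the smallest strong supermartingale dominating $\xi$. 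I would state this identification first, since the three claims then follow almost immediately from it. In particular, taking $\rho=\delta$ in the representation gives $\mathcal{R}ef[\xi]_\delta\ge E[\xi_\delta\vert\mathcal{F}_\delta]=\xi_\delta$ a.s.

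For the monotonicity, I would first observe that $\xi\le\xi'$ (pointwise in $t$, up to evanescence) forces $\xi_{t^-}\le\xi'_{t^-}$ for every $t$, since left limits preserve inequalities and both processes are left-limited; hence, by the definition $X_\rho:=X_{\tau^-}\mathbf{1}_H+X_\tau\mathbf{1}_{H^c}$, one has $\xi_\rho\le\xi'_\rho$ a.s.\ for every split stopping time $\rho$. Taking conditional expectations and then the essential supremum over $\rho\in\mathcal{S}_\delta$ in the representation above yields $\mathcal{R}ef[\xi]_\delta\le\mathcal{R}ef[\xi']_\delta$ a.s.\ for each $\delta\in\mathcal{S}_0$, and the section theorem (Theorem~\ref{section}) upgrades this to indistinguishability of the two optional processes.

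The remaining assertions are read off the Snell-envelope characterization. That $\mathcal{R}ef[\xi]$ is a strong supermartingale with $\mathcal{R}ef[\xi]\ge\xi$ is precisely the content of Theorem~\ref{snellTh}. Finally, if $\xi$ is itself a strong supermartingale, then Theorem~\ref{str} gives $\xi_\delta\ge E[\xi_\rho\vert\mathcal{F}_\delta]$ a.s.\ for every $\rho\in\mathcal{S}_\delta$ (since $\delta\le\rho$), so taking the essential supremum over $\rho$ yields $\mathcal{R}ef[\xi]_\delta\le\xi_\delta$ a.s.; combined with $\mathcal{R}ef[\xi]\ge\xi$ this gives $\mathcal{R}ef[\xi]=\xi$. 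Equivalently, one may invoke minimality directly: $\xi$ is a strong supermartingale dominating $\xi$, so the smallest such process satisfies $\mathcal{R}ef[\xi]\le\xi$.

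I do not expect a genuine obstacle here: once the identification $\mathcal{R}ef[\xi]=v$ is in place, each claim is a one-line consequence of results already established. The only points requiring a little care are the preservation of the inequality under left limits in the monotonicity step, and the routine passage from ``a.s.\ for each $\delta$'' to indistinguishability of processes via the section theorem.
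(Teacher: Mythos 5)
Your proposal is correct and follows essentially the same route as the paper: identify $\mathcal{R}ef[\xi]_\delta$ with $\esssup_{\rho\in\mathcal{S}_\delta}E[\xi_\rho\vert\mathcal{F}_\delta]$ via Theorem~\ref{existence RBSDE}, read off monotonicity, the supermartingale property and $\mathcal{R}ef[\xi]\ge\xi$ from the Snell-envelope characterization, and use Theorem~\ref{str} to get $\mathcal{R}ef[\xi]\le\xi$ when $\xi$ is itself a strong supermartingale. The only difference is that you spell out the step $\xi\le\xi'\Rightarrow\xi_\rho\le\xi'_\rho$ via left limits, which the paper leaves implicit.
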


\begin{proof} 
By definition, $\mathcal{R}ef[\xi]$ is the first component of the solution to the reflected BSDE (\ref{RBSDE}). Hence, Theorem \ref{existence RBSDE} shows that $\mathcal{R}ef[\xi]$ is the value function associated with the reward $\xi$, that is for each split stopping time $\delta=(G,\sigma) \in \mathcal{S}_0$ 
\begin{equation*}
\mathcal{R}ef[\xi]_\delta = \esssup_{\rho \in \mathcal{S}_\delta} E(\xi_\rho \vert \mathcal{F}_{\delta}).
\end{equation*}
Thus, the operator $\mathcal{R}ef$ is nondecreasing and the process $(\mathcal{R}ef[\xi])_{t\in [0,T]}$  is characterized as the Snell envelope over split stoping times associated with the process $(\xi)_{t\in [0,T]}$.
If now $\xi$ is a strong supermartingale, it remains to show that
$\mathcal{R}ef[\xi] \leq \xi$. Let $\delta \in \mathcal{S}_0$, since $\xi$ is a strong supermartingale, for each split stopping time $\rho \in \mathcal{S}_\delta $, we have
$$ E(\xi_\rho \vert \mathcal{F}_{\delta}) \leq \xi_\delta.$$
By definition of the essential supremum, we get $\mathcal{R}ef[\xi]_\delta \leq \xi_\delta$. Consequently, $\mathcal{R}ef[\xi] = \xi$.
\end{proof}

\begin{remark} \label{cv seq of p.s.s}
Observe that the nondecreasing limit of a sequence of strong supermartingales is a strong supermartingale (which is easy to check using Lebesgue theorem and the fact that the trajectories of strong supermartingales are bounded on compact intervals of $\mathbb{R}^+$). 
\end{remark}

\section{Doubly RBSDEs whose obstacles are irregular over a larger set of “stopping strategies”} \label{s4}

\hspace*{0.5 cm} We associate with $T$ the split terminal time $\rho^T=(\emptyset, T)$. Let now $g$ be a driver and $(\xi, \zeta)$ be a pair of ladlag admissible obstacles.

\begin{definition} \label{def1}
A process $(Y,Z,M,A,B,A',B') \in \mathbf{S}^{2} \times \mathbf{H}^2 \times \mathbf{M}^{2,\perp} \times (\mathbf{S}^{2})^2 \times (\mathbf{S}^{2})^2 $ is said to be a solution to the doubly RBSDE with parameters $(g,\xi,\zeta)$, where $g$ is a driver and $(\xi, \zeta)$ is a pair of admissible obstacles, if for all $\delta \in \mathcal{S}_0$
%\begin{equation}
\begin{multline}
Y_\delta=\xi_{\rho^T} + \int_\delta^{\rho^T} g(s,Y_s,Z_s) ds - \int_\delta^{\rho^T} Z_s dW_s - (M_{{\rho^T}}-M_{\delta}) +  A_{\rho^T} -A_\delta \\ - (A'_{\rho^T} -A'_\delta) + B_{{\rho^T}^-}-B_{\delta^-}-(B'_{{\rho^T}^-}-B'_{\delta^-}), \,\,\,\, \text{for all} \,\,\, \text{a.s.}, \label{eq*}
\end{multline}
%\end{equation}
with 
\begin{enumerate}
\item[(i)] $ \xi_\delta \leq Y_\delta \leq \zeta_\delta$,  a.s. for all  $\delta \in \mathcal{S}_0$,
\item[(ii)] $A$ and $A'$ are nondecreasing right-continuous predictable processes with $A_0=A'_0=0$ and such that 
\begin{align}
&\int_0^{\rho^T} \textbf{1}_{\lbrace Y_{t^-}>\xi_{t^-} \rbrace} dA^c_t=0 \,\,\, \text{a.s.    and    } \int_0^{\rho^T} \textbf{1}_{\lbrace Y_{t^-}<\zeta_{t^{-}} \rbrace} dA'^c_t =0  \,\,\, \text{a.s.}  \label{sko1} \\
& (Y_{\delta^-} - \xi_{\delta^-})(A^d_\delta - A^d_{\delta^-})= 0 \,\, \text{a.s. and } \,\, (Y_{\delta^-} - \zeta_{\delta^-})(A'^d_\delta - A'^d_{\delta^-})= 0 \,\, \text{a.s.}, \nonumber
\end{align}
\item[(iii)] $B$ and $B'$ are nondecreasing, right-continuous adapted purely discontinuous processes with $B_{0^-}=B'_{0^-}=0$,
\begin{equation}
(Y_\delta-\xi_\delta)(B_\delta -B_{\delta^-})=0 \,\,\, \text{and} \,\,\, (Y_\delta -\zeta_\delta)(B'_\delta -B'_{\delta^-})=0 \,\, \text{a.s. for all } \,\, \delta \in \mathcal{S}_0,\label{sko2}
\end{equation}
\item[(iv)] \begin{equation} \label{mutualy singular}
dA_t \perp dA'_t \,\, \text{and}\,\, dB_t \perp dB'_t. 
\end{equation} 
\end{enumerate}
%Here $ A^c$ denotes the continuous part of the nondecreasing process $A$ and $A^d$ its discontinuous part. 
Conditions (\ref{sko1}) and (\ref{sko2}) are called minimality conditions or
Skorohod conditions. 
\end{definition}

\begin{remark} 
The Skorohod conditions for $A^d$ and  $A'^d$ (resp. $B$ and $B'$) can be rewritten as 
\begin{align*}
&(Y_{\sigma^-} - \xi_{\sigma^-})(A^d_\sigma - A^d_{\sigma^-}) \mathbf{1}_{G^c}= 0 \,\, \text{a.s. and } \,\, (Y_{\sigma^-} - \zeta_{\sigma^-})(A'^d_\sigma - A'^d_{\sigma^-})\mathbf{1}_{G^c}= 0 \,\, \text{a.s.} \\
& (\text{resp.} \,\,\, (Y_\sigma-\xi_\sigma)(B_\sigma -B_{\sigma^-})\mathbf{1}_{G^c} =0 \,\,\, \text{a.s.} \,\,\,  \text{and} \,\,\, (Y_\sigma -\zeta_\sigma)(B'_\sigma -B'_{\sigma^-})\mathbf{1}_{G^c}=0)  \,\,\, \\ &\text{for all}\,\,\, \delta=(G,\sigma) \in \mathcal{S}_0.
\end{align*}
This follows by the same reasoning applied for $A$ and $B$ in Remark \ref{reee}.
\end{remark}

\begin{remark}
In Definition \ref{def1}, by considering $\delta=(\emptyset, \tau)$, where $\tau$ runs through the
set of stopping times $\mathcal{T}_0$, we recover the usual formulation of the doubly
reflected BSDEs with optional obstacles given in \cite{Doubly Reflected BSDEs and E-Dynkin games}. Moreover, if we associate $T$ with the split terminal time $\rho^T = (\Omega, T )$ and we consider $\delta=(\Omega, \tau)$, where $\tau$ runs through the set of predictable stopping times $\mathcal{T}^p_0$, our definition coincides with the definition of doubly reflected BSDEs for predictable obstacles given in \cite{ArharasBouhadouOuknine}.
%We see that this work generalize the case of optional and predictable obstacles.
\end{remark}

 \begin{remark} \label{Rm}
 Note that a process $(Y,Z,M,A,B,A',B') \in \mathbf{S}^{2} \times \mathbf{H}^2 \times \mathbf{M}^{2,\perp} \times (\mathbf{S}^{2})^2 \times (\mathbf{S}^{2})^2 $ satisfies equation (\ref{eq*}) in the above definition if and only if, almost surely, for all $t$ in $[0,T]$, 
 \begin{multline} \label{eq**}
Y_t=\xi_T + \int_t^T g(s,Y_s,Z_s) ds - \int_t^T Z_s dW_s - (M_{T}-M_{t}) +  A_T-A_t \\- (A'_T-A'_t) +B_{T^-}-B_{t^-}-(B'_{T^-}-B'_{t^-}).
 \end{multline}
 \end{remark}
 
\begin{remark} \label{re}
Note that we have, $- \Delta_+ Y_\delta = \Delta B_\delta - \Delta B'_\delta$ a.s. for all $\delta \in \mathcal{S}_0$. Thus, from condition (\ref{mutualy singular}) we get $\Delta B_\delta= (Y_{\delta^+}-Y_\delta)^-$ a.s. for all $\delta \in \mathcal{S}_0$, and $\Delta B'_\delta= (Y_{\delta^+}-Y_\delta)^+$ a.s. for all $\delta \in \mathcal{S}_0$. In addition, we have
 $- \Delta Y_\delta = \Delta A_\delta - \Delta A'_\delta - \Delta M_\delta$ a.s. The notation $(Y_{\delta^+} - Y_\delta)^-$ and $(Y_{\delta^+} - Y_\delta)^+$ stands for $\max(- (Y_{\delta^+} - Y_\delta),0)$ and $ \max(Y_{\delta^+} - Y_\delta,0)$ respectively.
\end{remark}

\begin{proposition}
Let $g$ be a driver and $(\xi,\zeta)$ be a pair of ladlag admissible obstacles. Let $(Y,Z,M,A,B,A',B') $ be a solution to the doubly reflected BSDE with parameters $(g,\xi,\zeta)$. For each $\delta \in \mathcal{S}_0$, we have 
\begin{equation}\label{vv}
Y_\delta= (Y_{\delta^+} \vee \xi_\delta  )\wedge \zeta_\delta \,\,\, \text{a.s.}
\end{equation}
\end{proposition}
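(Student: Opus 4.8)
The plan is to reduce the claimed identity to a pointwise (i.e.\ $\omega$-by-$\omega$, a.s.) algebraic fact relating $Y_\delta$, $Y_{\delta^+}$ and the two barriers at $\delta$, and then to dispose of that fact by a short trichotomy on the sign of the right jump $Y_{\delta^+}-Y_\delta$. All the analytic input needed is already packaged in Remark \ref{re} together with the Skorohod conditions (\ref{sko2}) and the constraint (i) of Definition \ref{def1}, so no new estimate is required.

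First I would record, from Remark \ref{re}, that reading the forward form of (\ref{eq*}) at $\delta$ gives $-\Delta_+ Y_\delta = \Delta B_\delta - \Delta B'_\delta$, and that the mutual singularity $dB_t \perp dB'_t$ in (\ref{mutualy singular}) forces this right jump to split into its two signed parts carried by $B$ and $B'$ separately:
$$\Delta B_\delta = (Y_{\delta^+}-Y_\delta)^-, \qquad \Delta B'_\delta = (Y_{\delta^+}-Y_\delta)^+ \quad \text{a.s.}$$
Feeding these into the two minimality relations $(Y_\delta-\xi_\delta)(B_\delta-B_{\delta^-})=0$ and $(Y_\delta-\zeta_\delta)(B'_\delta-B'_{\delta^-})=0$ of (\ref{sko2}) yields the two key identities
$$(Y_\delta-\xi_\delta)(Y_{\delta^+}-Y_\delta)^- = 0, \qquad (Y_\delta-\zeta_\delta)(Y_{\delta^+}-Y_\delta)^+ = 0 \quad \text{a.s.}$$

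With these in hand I would argue on the fixed null-complement according to the sign of $Y_{\delta^+}-Y_\delta$. If $Y_{\delta^+}>Y_\delta$, then $(Y_{\delta^+}-Y_\delta)^+>0$, so the second identity forces $Y_\delta=\zeta_\delta$; since $Y_{\delta^+}\vee\xi_\delta \geq Y_{\delta^+}>Y_\delta=\zeta_\delta$, the right-hand side $(Y_{\delta^+}\vee\xi_\delta)\wedge\zeta_\delta$ equals $\zeta_\delta=Y_\delta$. Symmetrically, if $Y_{\delta^+}<Y_\delta$, the first identity forces $Y_\delta=\xi_\delta$, and then $Y_{\delta^+}\vee\xi_\delta=\xi_\delta=Y_\delta\leq\zeta_\delta$ gives the same conclusion. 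Finally, if $Y_{\delta^+}=Y_\delta$, the constraint $\xi_\delta\leq Y_\delta\leq\zeta_\delta$ from part (i) of Definition \ref{def1} immediately yields $(Y_{\delta^+}\vee\xi_\delta)\wedge\zeta_\delta=Y_\delta$. Exhausting the three cases proves the identity a.s.\ for fixed $\delta$.

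The only point requiring genuine care, rather than mechanical case-checking, is the jump splitting $\Delta B_\delta=(Y_{\delta^+}-Y_\delta)^-$ and $\Delta B'_\delta=(Y_{\delta^+}-Y_\delta)^+$: it is precisely the mutual singularity $dB_t\perp dB'_t$ that guarantees $B$ and $B'$ cannot jump at $\delta$ simultaneously, so that the negative and positive parts of $-\Delta_+ Y_\delta$ are allocated to $B$ and $B'$ respectively. Since this is exactly the content of Remark \ref{re}, I would simply invoke it, and everything downstream is the elementary trichotomy above.
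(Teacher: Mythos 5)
Your proof is correct and follows essentially the same route as the paper: both derive the identities $(Y_\delta-\xi_\delta)(Y_{\delta^+}-Y_\delta)^-=0$ and $(Y_\delta-\zeta_\delta)(Y_{\delta^+}-Y_\delta)^+=0$ from Remark \ref{re} and the Skorohod conditions \eqref{sko2}, and then conclude by an elementary case analysis (you split on the sign of the right jump, the paper on the position of $Y_\delta$ relative to the barriers, which is an equivalent organization of the same trichotomy).
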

 \begin{proof}
Remark \ref{re} together with the Skorokhod conditions \ref{sko2} for $B$ and $B'$ gives 
\begin{equation*}
(Y_{\delta^+} - Y_\delta)^- = \textbf{1}_{\lbrace Y_\delta = \xi_\delta \rbrace} (Y_{\delta^+} - Y_\delta)^- \,\,\,\text{and} \,\,\, (Y_{\delta^+} - Y_\delta)^+ = \textbf{1}_{\lbrace Y_\delta = \zeta_\delta \rbrace} (Y_{\delta^+} - Y_\delta)^+ \,\,\text{a.s.}
\end{equation*}
It follows easily that (\ref{vv}) holds on the sets $\lbrace \xi_\delta <Y_\delta <\zeta_\delta \rbrace$, $\lbrace \xi_\delta <Y_\delta=\zeta_\delta \rbrace$ and $\lbrace \xi_\delta =Y_\delta<\zeta_\delta \rbrace$, which proves the proposition. 
 \end{proof}
 
 \begin{definition}(\textbf{Mokobodski's condition})
  Let $(\xi,\zeta) \in \mathbf{S}^{2} \times \mathbf{S}^{2}$ be a pair of admissible barriers. The Mokobodski's condition is defined as follows: there exist two nonnegative strong supermartingales $H$ and $\overline{H}$ in $ \mathbf{S}^{2}$ such that:
\begin{equation}
\xi_t \leq H_t- \overline{H}_t \leq \zeta_t \,\,\,\,\,\,\,\, 0\leq t \leq T \,\,\,\,\,\, \text{a.s.}
\end{equation}
 \end{definition}

\subsection{Existence and uniqueness when g is a driver process} \label{sub 4.1}

\hspace*{0.5 cm} In this section, we are given a process $g=(g_t)$ in $\mathbf{H}^2$. Let $\xi$ and $\zeta$ be two ladlag admissible obstacles. We show that the doubly reflected BSDE associated with the driver process $(g_t)$ and the
barriers $\xi$ and $\zeta$ admits a unique solution $(Y, Z, M, A,B, A',B')$.\\

 Let $(Y,Z,M,A,B,A',B') \in \mathbf{S}^{2} \times \mathbf{H}^2 \times \mathbf{M}^{2,\perp} \times (\mathbf{S}^{2})^2 \times (\mathbf{S}^{2})^2 $ be a solution to the DRBSDE associated with driver $g$ and the barriers $\xi$ and $\zeta$.  Let $\tilde{Y}_t := Y_t - \mathbb{E}[\xi_{\rho^T} + \int_t^{\rho^T} g_s ds \vert \mathcal{F}_{t}]$, for all $t\in [0,T]$. From this definition together with equation (\ref{eq*}), we get

\begin{equation}
\tilde{Y}_\delta= J^{g}_\delta- \bar{J}^{g}_\delta \,\,\,\text{a.s.} \,\,\,\, \text{for all}\,\,\,\, \delta  \in \mathcal{S}_0, \label{J-J'}
\end{equation}
where the  processes $J^{g}$ and $\bar{J}^{g}$ are defined, for all $t \in [0,T]$, by 
\begin{equation}
J^{g}_t := \mathbb{E}[A_{\rho^T}-A_t +B_{{\rho^T}^-}- B_{t^-} \vert \mathcal{F}_{t}] \,\,\,\, \text{and} \,\,\,\,
 \bar{J}^{g}_t := \mathbb{E}[A'_{\rho^T} -A'_t +B'_{{\rho^T}^-}- B'_{t^-} \vert \mathcal{F}_{t}].
\end{equation}
Let us denote by $\tilde{\xi}_t^{g}$ and  $\tilde{\zeta}_t^{g}$ the following optional process defined for all $\delta \in\mathcal{S}_0$ by:
 \begin{equation}
 \tilde{\xi}_\delta^{g} := \xi_\delta - \mathbb{E}[\xi_{\rho^T} + \int_\delta^{\rho^T} g_s ds \vert \mathcal{F}_{\delta}], \,\,\,\,\, \tilde{\zeta}_\delta^{g} := \zeta_\delta - \mathbb{E}[\zeta_{\rho^T} + \int_\delta^{\rho^T} g_s ds \vert \mathcal{F}_{\delta}]. \,\,\,\,\, 
 \end{equation}
 
\begin{remark} \label{J,Jbar}
Note that since $A, \,\,A', \,B$ and $B'$ are nondecreasing processes belong to $\mathbf{S}^{2}$, $J^{g}$ and $\bar{J}^{g}$ are two nonnegative strong supermartingales in $\mathbf{S}^{2}$ such that $J^{g}_{\rho^T}=\bar{J}^{g}_{\rho^T}=0$ a.s.
Since $g \in \mathbf{H}^2$ and $\xi , \zeta \in \mathbf{S}^{2}$, it follows that $\tilde{\xi}^{g}$ and $\tilde{\zeta}^{g}$ belong to $\mathbf{S}^{2}$. Moreover, we have $\tilde{\xi}^{g}_{\rho^T}=\tilde{\zeta}^{g}_{\rho^T}=0$ a.s. 
\end{remark}

\begin{lemma} \label{implication}
Let $Y \in \mathbf{S}^{2}$ be the first component of a solution of the doubly RBSDE with parameters $(g,\xi,\zeta)$, where $g$ is a driver and $(\xi, \zeta)$ is a pair of ladlag admissible obstacles. We then have 
$$Y_\delta= J^{g}_\delta - \bar{J}^{g}_\delta + \mathbb{E}[\xi_{\rho^T} + \int_\delta^{\rho^T} g_s ds \vert \mathcal{F}_{\delta}] \,\,\, \text{a.s.}  \,\, \text{for all }\delta \in \mathcal{S}_0,$$ 
where the process $J^{g}$ and $\bar{J}^{g}$ satisfy the following coupled system of reflected BSDEs:
\begin{equation} \label{coupled}
J^{g}= \mathcal{R}ef[(\bar{J}^{g} + \tilde{\xi}^{g} ) \textbf{1}_{[0,T)}]; \,\,\,\, \bar{J}^{g}= \mathcal{P}re[(J^{g}-\tilde{\zeta}^{g} ) \textbf{1}_{[0,T)}],
\end{equation}
where $\mathcal{R}ef$ is the operator associated to the RBSDE over split stopping times with driver 0 (cf. Definition  \ref{operator}).
\end{lemma}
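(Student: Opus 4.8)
The plan is to split the statement into its two layers and handle the second by an identification-plus-uniqueness argument rather than by proving minimality directly. The representation $Y_\delta = J^g_\delta - \bar J^g_\delta + \mathbb{E}[\xi_{\rho^T} + \int_\delta^{\rho^T} g_s\,ds \mid \mathcal{F}_\delta]$ is immediate: it is merely the definition of $\tilde Y$ rearranged together with the already-established identity \eqref{J-J'}, so no work is required there. The substance is the coupled system \eqref{coupled}, and my strategy is to verify that $J^g$, equipped with its Mertens data, genuinely \emph{is} a solution of the driver-$0$ reflected BSDE with lower obstacle $(\bar J^g + \tilde\xi^g)\mathbf{1}_{[0,T)}$; the identification $J^g = \mathcal{R}ef[(\bar J^g + \tilde\xi^g)\mathbf{1}_{[0,T)}]$ then follows from the uniqueness statement in Theorem \ref{existence RBSDE} and Definition \ref{operator}. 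The relation for $\bar J^g$ is obtained by the mirror-image computation against the upper obstacle.

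First I would assemble the ingredients for $J^g$. By Remark \ref{J,Jbar}, $J^g$ and $\bar J^g$ are nonnegative strong supermartingales in $\mathbf{S}^2$ with $J^g_{\rho^T} = \bar J^g_{\rho^T} = 0$, while $\tilde\xi^g, \tilde\zeta^g \in \mathbf{S}^2$ have zero terminal value, so $(\bar J^g + \tilde\xi^g)\mathbf{1}_{[0,T)}$ is an admissible ladlag obstacle. The key algebraic simplification is the obstacle inequality: since $J^g_\delta = \tilde Y_\delta + \bar J^g_\delta$ and the conditional-expectation terms inside $\tilde Y_\delta$ and $\tilde\xi^g_\delta$ coincide, one finds $J^g_\delta - (\bar J^g_\delta + \tilde\xi^g_\delta) = \tilde Y_\delta - \tilde\xi^g_\delta = Y_\delta - \xi_\delta \geq 0$ for $\delta < \rho^T$, with value $0$ recovered at $\rho^T$. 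Next, setting $N_t := \mathbb{E}[A_{\rho^T} + B_{{\rho^T}^-} \mid \mathcal{F}_t]$ exhibits the Mertens decomposition $J^g_t = N_t - A_t - B_{t^-}$, with the same predictable $A$ and optional purely discontinuous $B$ coming from the DRBSDE; since $A_T, B_T \in L^2$ the martingale $N$ lies in $\mathbf{M}^2$, so Lemma \ref{orthogonal decomposition} produces $Z$ and a part in $\mathbf{M}^{2,\perp}$, and subtracting the value at $\rho^T$ puts $J^g$ in the exact form \eqref{RBSDE} with driver $0$.

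The minimality conditions then transfer verbatim through the same simplification $J^g - \text{obstacle} = Y - \xi$: the identity $\{J^g_{t^-} > (\bar J^g + \tilde\xi^g)_{t^-}\} = \{Y_{t^-} > \xi_{t^-}\}$ turns the first part of \eqref{sko1} into the Skorohod condition for $A^c$, while $J^g_{\delta^-} - (\ldots)_{\delta^-} = Y_{\delta^-} - \xi_{\delta^-}$ and $J^g_\delta - (\ldots)_\delta = Y_\delta - \xi_\delta$ convert the second part of \eqref{sko1} and the first half of \eqref{sko2} into the conditions for $A^d$ and $B$. Thus $J^g$ solves the driver-$0$ RBSDE with obstacle $(\bar J^g + \tilde\xi^g)\mathbf{1}_{[0,T)}$, and uniqueness yields the first equation of \eqref{coupled}. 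Running the entirely symmetric computation with $\bar J^g = J^g - \tilde Y$, the bound $\bar J^g_\delta - (J^g_\delta - \tilde\zeta^g_\delta) = \zeta_\delta - Y_\delta \geq 0$ (where the cancellation $\tilde\zeta^g_\delta - \tilde Y_\delta = \zeta_\delta - Y_\delta$ additionally uses the admissibility identity $\xi_{\rho^T} = \zeta_{\rho^T}$), the Mertens data $(A', B')$, and the $\{Y = \zeta\}$-minimality drawn from \eqref{sko1}--\eqref{sko2}, identifies $\bar J^g$ with $\mathcal{P}re[(J^g - \tilde\zeta^g)\mathbf{1}_{[0,T)}]$, the upper-obstacle reflection operator.

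The main obstacle I anticipate is the bookkeeping at the terminal time and along left limits: one must check that the indicator $\mathbf{1}_{[0,T)}$ makes each obstacle's terminal value match $J^g_{\rho^T} = 0$ (and likewise for $\bar J^g$), that the split-time evaluation $X_\rho = X_{\tau^-}\mathbf{1}_H + X_\tau\mathbf{1}_{H^c}$ is respected when reducing the minimality conditions over $\mathcal{S}^p_0$ and $\mathcal{S}_0$, and that $N$ indeed lands in $\mathbf{M}^2$ so that Lemma \ref{orthogonal decomposition} applies. The cancellations $\tilde Y - \tilde\xi^g = Y - \xi$ and $\tilde\zeta^g - \tilde Y = \zeta - Y$ are exactly what render all these reductions clean; the one place where the symmetry with the lower-barrier case must be justified rather than merely asserted is confirming that $\mathcal{P}re$ is the genuine upper-barrier counterpart of $\mathcal{R}ef$, carrying its own existence-and-uniqueness statement so that the identification of $\bar J^g$ is legitimate.
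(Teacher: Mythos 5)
Your proposal is correct and follows essentially the same route as the paper: the representation comes directly from \eqref{J-J'}, then the Mertens decompositions of $J^{g}$ and $\bar{J}^{g}$ are identified with the explicit martingales $\mathbb{E}[A_{\rho^T}+B_{{\rho^T}^-}\mid\mathcal{F}_{t}]$ and $\mathbb{E}[A'_{\rho^T}+B'_{{\rho^T}^-}\mid\mathcal{F}_{t}]$, the orthogonal martingale decomposition of Lemma \ref{orthogonal decomposition} is applied, and the Skorohod conditions are transferred through the cancellations $\tilde{Y}-\tilde{\xi}^{g}=Y-\xi$ and $\tilde{\zeta}^{g}-\tilde{Y}=\zeta-Y$, exactly as in the paper's proof. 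The only point to adjust is your reading of $\mathcal{P}re$ as a distinct upper-barrier operator requiring separate justification: as Proposition \ref{equivalent proposition} makes explicit, $\bar{J}^{g}$ dominates the obstacle $(J^{g}-\tilde{\zeta}^{g})\textbf{1}_{[0,T)}$ from above, so the second equation in \eqref{coupled} uses the same lower-barrier operator $\mathcal{R}ef$ (the symbol $\mathcal{P}re$ there is a typo), and your own bound $\bar{J}^{g}_\delta-(J^{g}_\delta-\tilde{\zeta}^{g}_\delta)=\zeta_\delta-Y_\delta\geq 0$ already exhibits this.
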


\begin{proof}
From the definition of $\tilde{Y}$ and equality (\ref{J-J'}), it follows that $$Y_\delta= J^{g}_\delta - \bar{J}^{g}_\delta + \mathbb{E}[\xi_{\rho^T} + \int_\delta^{\rho^T} g_s ds \vert \mathcal{F}_{\delta}], \,\, \text{for all }\delta \in \mathcal{S}_0, \,\,\, \text{a.s.},$$ 
By Remark \ref{J,Jbar} the processes $(\bar{J}^{g} + \tilde{\xi}^{g} ) \textbf{1}_{[0,T)}$ and  $(J^{g}-\tilde{\zeta}^{g} ) \textbf{1}_{[0,T)}$ belong to $\mathbf{S}^{2}$. Otherwise, from (i) in Definition \ref{def2} we get for all $\delta \in \mathcal{S}_0$, $\tilde{\xi}^{g}_\delta \leq \tilde{Y}_\delta = J^{g}_\delta - \bar{J}^{g}_\delta \leq \tilde{\zeta}^{g}_\delta$ a.s Therefore, $$J^{g}_\delta \geq \bar{J}^{g}_\delta + \tilde{\xi}^{g}_\delta \,\,\,\, \text{and} \,\,\,\, \bar{J}^{g}_\delta \geq J^{g}_\delta - \tilde{\zeta}^{g}_\delta,\, \,\, \text{a.s. for all}\,\, \delta \in \mathcal{S}_0.$$

From  Remark \ref{J,Jbar}, $J^{g}$ and $\bar{J}^{g}$ are two nonnegative optional strong supermartingales in $\mathbf{S}^{2}$, hence of class $(\mathcal{D})$ (i.e. $\lbrace J^{g}_\tau; \tau \in \mathcal{T}_0 \rbrace$ and $\lbrace \bar{J}^{g}; \tau \in \mathcal{T}_0 \rbrace $ are uniformly integrable). Applying Mertens decomposition for optional strong supermartingales of class $(\mathcal{D})$ (see \cite{Meyer_Un cours sur les integrales stochastiques}, p. 143), we get for all $\delta \in \mathcal{S}_0$
\begin{align}
J^{g}_\delta  = N_{\delta} - A^1_\delta - B^1_{\delta^-}  \,\,\,\text{a.s.;}\,\,\,\,\, \bar{J}^{g}_\delta = \bar{N}_{\delta} - A^2_\delta - B^2_{\delta^-} \,\,\,\text{a.s.}\label{e1}
\end{align}
where; $N$ and $\bar{N}$ are two cadlag uniformly integrable martingales,\\
\hspace*{1.1 cm} $A^1$ and $A^2$ two nondecreasing right-continuous predictable processes with  $A^1_0=$\\ \hspace*{1.1 cm}$A^2_0=0$ and $E[A^1_T]< \infty$, $E[A^2_T]< \infty$, \\
\hspace*{1.1 cm} $B^1$ and $B^2$ two nondecreasing right-continuous adapted purely discontinuous \\
\hspace*{1.1 cm} processes with $B^1_{0^-}=B^2_{0^-}=0$ and $E[B^1_T]< \infty$, $E[B^2_T]< \infty$.\\
Since otherwise, 
\begin{align*}
J^{g}_\delta = \mathbb{E}[A_{\rho^T}+ B_{{\rho^T}^-} \vert \mathcal{F}_{\delta}] -A_\delta - B_{\delta^-}; \\
\bar{J}^{g}_\delta = \mathbb{E}[A'_{\rho^T} +B'_{{\rho^T}^-} \vert \mathcal{F}_{\delta}]-A'_\delta- B'_{\delta^-},
\end{align*}
the uniqueness of Mertens decomposition implies that $A^1 \equiv A$, $B^1 \equiv B$, $A^2 \equiv A'$ and $B^2 \equiv B'$. By the orthogonal decomposition property of martingales in $\mathbf{M}^2$ (Lemma \ref{orthogonal decomposition}), there exist  $(L, M^1), \, (\bar{L}, M^2) \in \mathbf{H}^2 \times \mathbf{M}^{2,\perp} $ such that $\forall \delta \in \mathcal{S}_0$
\begin{align}
 N_\delta= \int_0^\delta L_s dW_s + M^1_\delta \,\,\, \text{a.s.}, 
\bar{N}_\delta= \int_0^\delta \bar{L}_s dW_s + M^2_\delta \,\,\, \text{a.s.} \label{e2}
\end{align}
Therefore, combining equation (\ref{e1}) with (\ref{e2}), we get for all $\delta\in \mathcal{S}_0$
\begin{align}
J^{g}_\delta & = - \int_\delta^{\rho^T} L_s dW_s -( M^1_{{\rho^T}} - M^1_{\delta}) + A_{\rho^T}-A_\delta + B_{{\rho^T}^-} - B_{\delta^-} \,\, \text{a.s.}; \label{eq J^p} \\
\bar{J}^{g}_\delta & = - \int_\delta^{\rho^T} \bar{L}_s dW_s -(M^2_{\rho^T} - M^2_{\delta}) + A'_{\rho^T} - A'_\delta +  B'_{{\rho^T}^-} - B'_{\delta} \,\, \text{a.s.} \label{eq Jbar^p}
\end{align}
Next, we have $Y_\delta -\xi_\delta = \tilde{Y}_\delta - \tilde{\xi}^{g}_\delta = J^{g}_\delta - \bar{J}^{g}_\delta - \tilde{\xi}^{g}_\delta$ a.s. By the Skorokhod conditions (\ref{sko2}), (\ref{sko1}) satisfied by $B$ and $A$, we get 
\begin{align}
\Delta B_\delta [J^{g}_\delta - (\bar{J}^{g}_\delta + \tilde{\xi}^{g}_\delta)]=0 \,\,\,\, \text{a.s.;} \,\,\,\
[J^{g}_{\delta^-} - (\bar{J}^{g}_{\delta^-} + \tilde{\xi}^{g}_{\delta^-})](A^d_\delta - A^d_\delta)=0  \,\,\,\, \text{a.s.} 
\end{align} %%for all } \delta \in \mathcal{T}_0^p.
We also have $\lbrace Y_{t^-} > \xi_{t^-} \rbrace = \lbrace J^{g}_{t^-} > \bar{J}^{g}_{t^-} + \tilde{\xi}^{g}_{t^-} \rbrace$. Hence, the skorokhod condition (\ref{sko1}) satisfied by $A$ can be expressed in the form:
\begin{equation}
\int_0^{\rho^T} \textbf{1}_{ \lbrace J^{g}_{t^-} > \bar{J}^{g}_{t^-} + \tilde{\xi}^{g}_{t^-} \rbrace} dA^c_t =0 \,\,\,\, \text{a.s.}
\end{equation}
We conclude that $(J^{g}, L, M^1, A, B)$ is the solution of the reflected BSDE associated with the driver $0$ and the barrier $(\bar{J}^{g} + \tilde{\xi}^{g})\textbf{1}_{[0,T)}$.\footnote{Note that this barrier is equal to $E(A'_T-A'_t + C'_{T^-} - C'_{t^-} \vert \mathcal{F}_{t}) - \tilde{\xi}^{g}_t$ if $t<T$, and $0 $ if $t=T$. } \\
We prove similarly that  $(\bar{J}^{g}, \bar{L}, M^2, A', B')$ is the solution of the reflected BSDE associated with the driver $0$ and the barrier $(J^{g} - \tilde{\zeta}^{g})\textbf{1}_{[0,T)}$. \footnote{Note that this barrier is equal to $E(A_T-A_t + C_{T^-} - C_{t^-} \vert \mathcal{F}_{t}) - \tilde{\zeta}^{g}_t$ if $t<T$, and $0 $ if $t=T$. } 
This completes the proof. 
\end{proof}

\begin{proposition} \label{equivalent proposition}Let $g \in \mathbf{H}^2$. Let $\xi$ and $\zeta$ be two ladlag admissible obstacles in $\mathbf{S}^{2}$. The following assertions are equivalent:
\begin{enumerate}
\item[(i)] The DRBSDE (\ref{eq*}) with driver process g(t) has a solution. 
\item[(ii)] There exist two processes $J_. \in \mathbf{S}^{2}$ and $\bar{J}_. \in \mathbf{S}^{2}$ satisfying the coupled system of DRBSDEs: 
\begin{equation} \label{coupled*}
J= \mathcal{R}ef[(\bar{J} + \tilde{\xi}^{g} ) \textbf{1}_{[0,T)}]; \,\,\,\,\,\,\, \bar{J}= \mathcal{R}ef[(J -\tilde{\zeta}^{g} ) \textbf{1}_{[0,T)}],
\end{equation}
where $\tilde{\xi}^{g}$ and $ \tilde{\zeta}^{g}$ as above. 
\end{enumerate}
In this matter, the  process $Y$ defined by 
\begin{equation} \label{Y}
Y_\delta := J_\delta - \bar{J}_\delta + E[ \xi_{\rho^T} + \int_\delta^{\rho^T} g_s ds  \vert \mathcal{F}_{\delta}],  \,\,\, \forall \delta \in \mathcal{S}_0 \,\, \text{a.s.}
\end{equation}
gives the first component of the solution to the predictable DRBSDE. 
\end{proposition}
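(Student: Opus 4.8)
The plan is to establish the two implications separately; the forward direction is essentially contained in Lemma~\ref{implication}, whereas the converse, i.e. reconstructing a full solution of \eqref{eq*} from a pair $(J,\bar J)$, is the substantial part. For \textbf{(i)$\Rightarrow$(ii)}, assume $(Y,Z,M,A,B,A',B')$ solves the DRBSDE and set $J:=J^{g}$, $\bar J:=\bar J^{g}$ with $J^{g},\bar J^{g}$ as in Lemma~\ref{implication}. That lemma shows precisely that this pair solves the coupled system \eqref{coupled*}, while Remark~\ref{J,Jbar} guarantees $J,\bar J\in\mathbf{S}^{2}$ with $J_{\rho^{T}}=\bar J_{\rho^{T}}=0$; the identity $\tilde Y=J-\bar J$ then gives back \eqref{Y}, so (ii) holds.

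For \textbf{(ii)$\Rightarrow$(i)}, given $J,\bar J\in\mathbf{S}^{2}$ solving \eqref{coupled*}, I would define $Y$ by \eqref{Y} and reverse the computation of Lemma~\ref{implication}. Since $J=\mathcal{R}ef[(\bar J+\tilde\xi^{g})\mathbf{1}_{[0,T)}]$, Theorem~\ref{existence RBSDE} (with the martingale splitting of Lemma~\ref{orthogonal decomposition}) furnishes a full reflected-BSDE solution with driver $0$ and null terminal value, so that $J_{\delta}=-\int_{\delta}^{\rho^{T}}L_{s}\,dW_{s}-(M^{1}_{\rho^{T}}-M^{1}_{\delta})+(A_{\rho^{T}}-A_{\delta})+(B_{\rho^{T-}}-B_{\delta^{-}})$, with $A$ predictable nondecreasing, $B$ purely discontinuous nondecreasing, $L\in\mathbf{H}^{2}$, $M^{1}\in\mathbf{M}^{2,\perp}$; symmetrically $\bar J$ yields $(\bar L,M^{2},A',B')$. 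Introducing the martingale $U_{t}:=E[\xi_{\rho^{T}}+\int_{0}^{\rho^{T}}g_{s}\,ds\mid\mathcal{F}_{t}]$ and representing its increments as $U_{\rho^{T}}-U_{\delta}=\int_{\delta}^{\rho^{T}}\phi\,dW_{s}+(R_{\rho^{T}}-R_{\delta})$ via Lemma~\ref{orthogonal decomposition}, I subtract the representation of $\bar J$ from that of $J$ and add $U$. Because $J_{\rho^{T}}=\bar J_{\rho^{T}}=0$ and $U_{\rho^{T}}=\xi_{\rho^{T}}+\int_{0}^{\rho^{T}}g$, this produces exactly \eqref{eq*} with $Z:=L-\bar L+\phi$ and $M:=M^{1}-M^{2}+R\in\mathbf{M}^{2,\perp}$, and with terminal value $Y_{\rho^{T}}=\xi_{\rho^{T}}$.

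It then remains to verify the constraints of Definition~\ref{def1}. From the two reflection inequalities $J\ge\bar J+\tilde\xi^{g}$ and $\bar J\ge J-\tilde\zeta^{g}$ on $[0,T)$, together with vanishing at $\rho^{T}$, one gets $\tilde\xi^{g}_{\delta}\le J_{\delta}-\bar J_{\delta}\le\tilde\zeta^{g}_{\delta}$, i.e. $\xi_{\delta}\le Y_{\delta}\le\zeta_{\delta}$, which is (i). For the minimality conditions I would exploit the set identities $\{Y_{t^{-}}>\xi_{t^{-}}\}=\{J_{t^{-}}>\bar J_{t^{-}}+\tilde\xi^{g}_{t^{-}}\}$ and $\{Y_{t^{-}}<\zeta_{t^{-}}\}=\{\bar J_{t^{-}}>J_{t^{-}}-\tilde\zeta^{g}_{t^{-}}\}$: the Skorokhod conditions (i)--(iii) of Theorem~\ref{existence RBSDE} attached to the $\mathcal{R}ef$-solutions $J$ and $\bar J$ (read against their own barriers) then translate verbatim into \eqref{sko1}--\eqref{sko2} for $(A,B)$ against $\xi$ and for $(A',B')$ against $\zeta$.

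The mutual singularity \eqref{mutualy singular} is the main obstacle. For $B,B'$ it is quick: by Remark~\ref{re}, $\Delta B_{\delta}=(Y_{\delta^{+}}-Y_{\delta})^{-}$ and $\Delta B'_{\delta}=(Y_{\delta^{+}}-Y_{\delta})^{+}$, and since $x^{+}x^{-}=0$ at most one of the two jumps is nonzero at each $\delta$; as $B,B'$ are purely discontinuous this gives $dB\perp dB'$. For $A,A'$ the point is more delicate: $dA$ is carried by the predictable set $\{Y_{-}=\xi_{-}\}$ and $dA'$ by $\{Y_{-}=\zeta_{-}\}$, so choosing $D:=\{Y_{t^{-}}<\zeta_{t^{-}}\}$ yields $\int\mathbf{1}_{D}\,dA'=0$ immediately, while $\int\mathbf{1}_{D^{c}}\,dA$ is supported on the coincidence set $\{\xi_{-}=\zeta_{-}\}\cap\{Y_{-}=\xi_{-}\}$. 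Establishing that $A$ does not charge $\{\xi_{-}=\zeta_{-}\}$, thereby separating the two measures where the barriers touch, is the step I expect to require the most care, combining the ordering $\xi\le\zeta$ with the minimality of the two Snell envelopes $J$ and $\bar J$. Once this is settled, $(Y,Z,M,A,B,A',B')$ is a genuine solution and \eqref{Y} identifies its first component, which closes the equivalence.
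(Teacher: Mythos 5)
Your treatment of (i)$\Rightarrow$(ii) and the bulk of (ii)$\Rightarrow$(i) — building $Y$ from \eqref{Y}, assembling equation \eqref{eq*} from the Mertens/orthogonal decompositions of $J$, $\bar J$ and the non-reflected BSDE for $E[\xi_{\rho^T}+\int_\cdot^{\rho^T}g_s\,ds\mid\mathcal{F}_\cdot]$, deriving $\xi\le Y\le\zeta$ from the reflection inequalities, and transporting the Skorokhod conditions through the set identities $\lbrace Y_{t^-}>\xi_{t^-}\rbrace=\lbrace J_{t^-}>\bar J_{t^-}+\tilde\xi^{g}_{t^-}\rbrace$ — matches the paper's argument. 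The gap is exactly where you suspected: the mutual singularity condition \eqref{mutualy singular}. Your argument for $dB_t\perp dB'_t$ is circular, because the identities $\Delta B_\delta=(Y_{\delta^+}-Y_\delta)^-$ and $\Delta B'_\delta=(Y_{\delta^+}-Y_\delta)^+$ in Remark \ref{re} are themselves \emph{derived from} condition \eqref{mutualy singular}; for the candidate processes produced by the two $\mathcal{R}ef$ operators you only know $\Delta B_\delta-\Delta B'_\delta=-\Delta_+Y_\delta$, and on the coincidence set $\lbrace\xi_\delta=\zeta_\delta\rbrace$ the Skorokhod conditions permit both $B$ and $B'$ to jump simultaneously. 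Likewise, your proposed route for $A,A'$ — showing that $A$ does not charge $\lbrace\xi_{t^-}=\zeta_{t^-}\rbrace$ — cannot work in general: admissibility only requires $\xi\le\zeta$, the barriers may touch on a large set, and nothing in the minimality of the two Snell envelopes prevents both $dA$ and $dA'$ from charging that set.

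The paper resolves this differently, and you should adopt its device: it does not prove that the constructed $(A,A')$ and $(B,B')$ are mutually singular, but \emph{replaces} them. By the canonical decomposition of an RCLL predictable process with integrable total variation (Proposition A.7 in the reference \cite{Generalized Dynkin Games and Doubly reflected BSDEs with jumps}), one writes $A-A'=C-C'$ and $B-B'=D-D'$ with $dC_t\perp dC'_t$, $dD_t\perp dD'_t$, and $dC_t\ll dA_t$, $dC'_t\ll dA'_t$, $dD_t\ll dB_t$, $dD'_t\ll dB'_t$. Equation \eqref{eq*} only involves the differences $A-A'$ and $B-B'$, so it is unchanged, and the absolute continuity relations let the Skorokhod conditions \eqref{sko1}--\eqref{sko2} pass from $(A,B,A',B')$ to $(C,D,C',D')$. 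The vector $(Y,Z,M,C,D,C',D')$ is then a genuine solution. Without this substitution step your construction does not produce an object satisfying Definition \ref{def1}, so the implication (ii)$\Rightarrow$(i) remains unproved as written.
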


\begin{proof}
The implication $(i) \Rightarrow (ii)$, has been proved in Lemma \ref{implication}. Let us prove $(ii) \Rightarrow (i)$. The steps of the proof are similar of those used in the literature (see eg. \cite{Generalized Dynkin Games and Doubly reflected BSDEs with jumps}, \cite{Doubly Reflected BSDEs and E-Dynkin games}). Let $(J, L, M^1,A,B)$ and $(\bar{J}, \bar{L}, M^2,A',B') $  be two solutions in $ \mathbf{S}^{2} \times \mathbf{H}^2 \times \mathbf{M}^{2,\perp} \times (\mathbf{S}^{2})^2 \times (\mathbf{S}^{2})^2$ of the coupled system (\ref{coupled*}). For all $\delta \in \mathcal{S}_0$, we define the process $Y$ as in (\ref{Y}). \\
\hspace*{0.5 cm} By assumptions, the processes $J$ and $\bar{J}$ belong to $\mathbf{S}^{2}$. Hence, the difference $J - \bar{J}$ and thus the process $Y$ are well defined. Further, since $J_{\rho^T}=\bar{J}_{\rho^T}=0$ a.s., we get $Y_{\rho^T}=\xi_{\rho^T}$ a.s. By the formulation of the coupled system (\ref{coupled*}), we get $J_\delta \geq \bar{J}_\delta + \tilde{\xi}^{g}_\delta$ and $\bar{J}_\delta \geq J_\delta - \tilde{\zeta}^{g}_\delta$ a.s. for all $\delta \in \mathcal{S}_0$. Then, we derive that $ \xi_\delta \leq Y_\delta \leq \zeta_\delta$ a.s. for all $\delta \in\mathcal{S}_0$. \\
 \hspace*{0.5 cm}
 Note that the process $(E(\xi_{\rho^T} + \int_t^{\rho^T} g_s \vert \mathcal{F}_{t}))_{t\in [0,T]}$ coincides with the first component of the solution to the (non-reflected) BSDE with terminal condition $\xi_{\rho^T}$ and driver $g$ (cf. \cite{Optimal stopping with}, Definition 2.2). Thus, there exist $(L', \bar{M}) \in \mathbf{H}^2 \times \mathbf{M}^{2,\perp}$ such that for all $\delta \in \mathcal{S}_0 $:
 \begin{equation}
 E(\xi_{\rho^T} + \int_\delta^{\rho^T} g_s ds \vert \mathcal{F}_{\delta}) = \xi_{\rho^T} + \int_\delta^{\rho^T} g_s ds - \int_\delta^{\rho^T} L'_s dW_s - (\bar{M}_{{\rho^T}} - \bar{M}_{\delta}) \,\,\, \text{a.s.}.
\end{equation}  
From this, together with (\ref{Y}) and the definition of $J$ and $\bar{J}$, we get  
\begin{multline}
Y_\delta= \xi_{\rho^T} + \int_\delta^{\rho^T} g_s ds - \int_\delta^{\rho^T} Z_s dW_s -( M_{{\rho^T}}- M_{\delta}) + A_{\rho^T}- A_\delta - (A'_{\rho^T}- A'_\delta) \\ + B_{{\rho^T}^-} - B_{\delta^-}  - (B'_{{\rho^T}^-} - B'_{\delta^-}),
\end{multline}
where $Z:= L-\bar{L} + L'$, $M=M^1-M^2 + \bar{M}$. \\
%(resp. $\int_0^T \textbf{1}_{\lbrace \bar{J}^p_{t^-} > J^p_{t^-} - \tilde{\zeta}^{g,p}_{t^-} \rbrace} dA'_t=0$ a.s. and for all $\tau \in \mathcal{T}^p_0$, $\Delta B'_{\tau} = \textbf{1}_{\lbrace \bar {J}^p_\tau = J^p_\tau - \tilde{\zeta}^{g,p}_\tau \rbrace} \Delta B_{\tau}$ a.s.), 
Skorokhod conditions (\ref{sko1}) and (\ref{sko2}) for $A$, $A'$, $B$ and $B'$: From the above, it follows that the processes $A$, $B$ (resp. $A'$, $B'$) satisfy the Skorokhod conditions for RBSDEs. Accordingly, we have: for all $\delta \in \mathcal{S}_0$, $\Delta A^d_\delta = \textbf{1}_{\lbrace J_{\delta^-} = \bar{J}_{\delta^-} + \tilde{\xi}^{g}_{\delta^-}\rbrace} \Delta A^d_\tau=$ a.s.; $\Delta B_{\delta} = \textbf{1}_{\lbrace J_\delta = \bar{J}_\delta + \tilde{\xi}^{g}_\tau \rbrace} \Delta B_{\delta}$ a.s. and $\int_0^{\rho^T} \textbf{1}_{\lbrace J_{t^-} > \bar{J}_{t^-} + \tilde{\xi}^{g}_{t^-} \rbrace} dA^c_t=0$ a.s. Similar conditions hold for $A'$ and $B'$. By the definition of $Y$, $\tilde{\xi}^{g}$ and $\tilde{\zeta}^{g}$, we get 
\begin{align*}
\lbrace J_{\delta^-} > \bar{J}_{\delta^-} + \tilde{\xi}^{g}_{\delta^-} \rbrace = &\lbrace Y_{\delta^-} > \xi_{\delta^-} \rbrace, \,\,\,\,
 \lbrace J_\delta = \bar{J}_\delta + \tilde{\xi}^{g}_\delta \rbrace = \lbrace Y_\delta=\xi_\delta \rbrace,\\ 
  &\lbrace J_{\delta^-} = \bar{J}_{\delta^-} + \tilde{\xi}^{g}_{\delta^-}\rbrace = \lbrace Y_{\delta^-} = \xi_{\delta^-} \rbrace. 
\end{align*}
Thus, we get (\ref{sko1}) and (\ref{sko2}) for $A$ and $B$. By the same manner, we prove the Skorokhod conditions for $A'$ and $B'$. The only point remaining concerns the behavior of the random measures $dA_t$, $dA'_t$ and $dB_t$, $dB'_t$. If $dA_t \perp dA'_t$ and $dB_t \perp  dB'_t$, then the vector $(Y,Z,M,A,B,A',B')$ is a solution to the doubly RBSDE with parameters $(g,\xi,\zeta)$, which is the desired conclusion. If not, by the canonical decomposition of a RCLL predictable process with integrable total variation (see, Proposition A.7 in \cite{Generalized Dynkin Games and Doubly reflected BSDEs with jumps}, p. 30), there exist a pair $(C,C')$ (resp. $(D,D')$) of nondecreasing right-continuous predictable processes belonging in $\mathbf{S}^{2}$, such that $A-A'= C-C'$ (resp. $B-B'= D-D'$) with $dC_t \perp dC'_t$ (resp. $dD_t \perp dD'_t$). Moreover, $dC_t \ll dA_t$, $dC'_t \ll dA'_t$, $dD_t \ll dB_t$ and $dD'_t \ll dB'_t$. Then since $\int_0^{\rho^T} \textbf{1}_{\lbrace Y_{t^-} > \xi_{t^-} \rbrace} dA^c_t =0$ a.s., we get $\int_0^{\rho^T} \textbf{1}_{\lbrace Y_{t^-} > \xi_{t^-} \rbrace} dC^c_t =0$ a.s. Also, for all $\delta=(G,\sigma) \in \mathcal{S}_0$ on $G^c$ we have $\Delta A^d_\sigma = \mathbf{1}_{\lbrace Y_{\sigma^-} = \xi_{\sigma^-} \rbrace} \Delta A^d_\sigma $ a.s., thus $(Y_{\sigma^-} - \xi_{\sigma^-})(C^d_\sigma - C^d_{\sigma^-}) \mathbf{1}_{G^c}= 0$ a.s. for all $\delta=(G,\sigma) \in \mathcal{S}_0$. Similarly, we obtain $\int_0^{\rho^T} \textbf{1}_{\lbrace Y_{t^-} < \zeta_{t^-} \rbrace} dC'^{c}_t =0$ and $(Y_{\sigma^-} - \zeta_{\sigma^-})(C'^d_\sigma - C'^d_{\sigma^-}) \mathbf{1}_{G^c}= 0$ a.s. for all $\delta=(G,\sigma) \in \mathcal{S}_0$, hence the processes $C$ and $C'$ satisfy the Skorokhod conditions (\ref{sko1}).\\ Therefore, the observation $dD_t \ll dB_t$ implies that $D$ is purely discontinuous and for all $\delta=(G,\sigma) \in \mathcal{S}_0$, $(Y_\sigma - \xi_\sigma) \Delta D_\sigma \textbf{1}_{H^c} = 0$ a.s. Similarly, $D'$ is purely discontinuous and $(Y_\sigma - \zeta_\sigma) \Delta D'_\sigma \textbf{1}_{H^c} = 0$ a.s. for all $\delta=(G,\sigma) \in \mathcal{S}_0$. Hence, the processes $D$ and $D'$ satisfy the Skorokhod conditions (\ref{sko2}).  We conclude that the process $(Y,Z,M,C,D,C',D')$ is a solution to the doubly RBSDE with parameters $(f,\xi,\zeta)$. Which completes the proof.\\
\end{proof}

%\hspace*{0.5 cm} From Proposition \ref{equivalent proposition}, the existence of a solution of the coupled system (\ref{coupled*}) imply the existence of a solution to the DRBSDE associated with driver process $(g_t)$. In the following lemma we prove that, under Mokobodzki's condition, there exist a solution of the coupled system of reflected BSDEs (\ref{coupled*}). \\
%iterative approximation to the coupled system

Set now $\mathbf{J}^{0}_.=0$ and $\bar{\mathbf{J}}^{0}_.=0$ . We define recursively, for each $n\in \mathbb{N}$, the processes:\footnote{We omit the exponent $g$ in the notation for $\mathbf{J}^{n}$ and $\bar{\mathbf{J}}^{n}$ for sake of simplicity. \label{refnote}}
\begin{equation} \label{coupled**}
\mathbf{J}^{n+1} := \mathcal{R}ef[(\bar{\mathbf{J}}^{n} + \tilde{\xi}^{g} ) \textbf{1}_{[0,T)}]; \,\,\,\,\,\,\, \bar{\mathbf{J}}^{n+1} := \mathcal{R}ef[(\mathbf{J}^{n}  -\tilde{\zeta}^{p} ) \textbf{1}_{[0,T)}].
\end{equation}
Since $\tilde{\xi}^{g}$ and $ \tilde{\zeta}^{g}$ belong to $ \mathbf{S}^{2}$, by induction, the processes $\mathbf{J}^{n}$ and $\bar{\mathbf{J}}^{n}$ are well defined.

\begin{lemma}\label{lim}
Let $(\xi,\zeta) \in \mathbf{S}^{2} \times \mathbf{S}^{2}$ be a pair of ladlag admissible barriers satisfies Mokobodzki's condition. The sequences of processes $(\mathbf{J}^{n}_.)_{n\in\mathbb{N}}$ and $(\bar{\mathbf{J}}^{n}_.)_{n\in\mathbb{N}}$ are nondecreasing. The processes $\mathbf{J}_.$ and $\bar{\mathbf{J}}_.$ defined by 
\begin{equation} 
\mathbf{J}_. := \lim_{n \rightarrow +\infty} \mathbf{J}^{n}_. \,\,\, \text{and} \,\,\, \bar{\mathbf{J}}_. := \lim_{n \rightarrow +\infty} \bar{\mathbf{J}}^{n}_.
\end{equation}
 are the smallest nonnegative strong supermartingales in $\mathbf{S}^{2}$ satisfying the system (\ref{coupled*}). We also have following minimality property: if $H$ and $\bar{H}$ are two nonnegative strong supermartingales such that $\tilde{\xi}^{g} \leq H-\bar{H} \leq \tilde{\zeta}^{g}$, then we have $\mathbf{J} \leq H$ and $\bar{\mathbf{J}} \leq \bar{H}$. 
\end{lemma}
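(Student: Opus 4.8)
The strategy is to run the monotone scheme \eqref{coupled**} and trap it between $0$ and a pair of strong supermartingales extracted from Mokobodzki's condition, using throughout the three properties of $\mathcal{R}ef$ from Lemma \ref{incr opera}: it is nondecreasing, it dominates its argument ($\mathcal{R}ef[\eta]\ge\eta$), and it fixes strong supermartingales ($\mathcal{R}ef[\eta]=\eta$). Set $\mathcal{E}_\cdot:=\mathbb{E}[\xi_{\rho^T}+\int_\cdot^{\rho^T}g_s\,ds\mid\mathcal{F}_\cdot]$, so that $\tilde{\xi}^{g}=\xi-\mathcal{E}$ and $\tilde{\zeta}^{g}=\zeta-\mathcal{E}$. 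I first prove by induction that $(\mathbf{J}^{n})$ and $(\bar{\mathbf{J}}^{n})$ are nondecreasing. Because the rewards in \eqref{coupled**} carry the factor $\mathbf{1}_{[0,T)}$ they vanish at $\rho^T$, so choosing the competitor $\rho^T$ in the Snell representation $\mathcal{R}ef[\eta]_\delta=\esssup_{\rho\in\mathcal{S}_\delta}\mathbb{E}[\eta_\rho\mid\mathcal{F}_\delta]$ yields $\mathbf{J}^{1}\ge0=\mathbf{J}^{0}$ and $\bar{\mathbf{J}}^{1}\ge0=\bar{\mathbf{J}}^{0}$; and if $\mathbf{J}^{n}\ge\mathbf{J}^{n-1}$, $\bar{\mathbf{J}}^{n}\ge\bar{\mathbf{J}}^{n-1}$, monotonicity of $\mathcal{R}ef$ applied to the correspondingly ordered rewards gives $\mathbf{J}^{n+1}\ge\mathbf{J}^{n}$ and $\bar{\mathbf{J}}^{n+1}\ge\bar{\mathbf{J}}^{n}$.

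The delicate point, which I expect to be the main obstacle, is a uniform $\mathbf{S}^{2}$ bound on the iterates. For this I must upgrade Mokobodzki's condition for $(\xi,\zeta)$ into a pair of nonnegative strong supermartingales $H',\bar{H}'\in\mathbf{S}^{2}$ bracketing the shifted barriers, $\tilde{\xi}^{g}\le H'-\bar{H}'\le\tilde{\zeta}^{g}$. Starting from the Mokobodzki pair $(H,\bar{H})$ and writing $\mathcal{E}=N-\int_0^\cdot g_s\,ds$ with $N\in\mathbf{M}^{2}$, the idea is to absorb the increasing part $\int_0^\cdot g_s^{+}\,ds$ into the subtracted supermartingale by replacing $\bar{H}$ with $\bar{H}+\mathbb{E}[\int_\cdot^{\rho^T}g_s^{+}\,ds\mid\mathcal{F}_\cdot]$, which turns the correction of $H-\mathcal{E}$ into a genuine strong supermartingale, and then to add to both terms a common nonnegative martingale so as to enforce nonnegativity without altering the difference $(H-\bar{H})-\mathcal{E}$; every piece stays in $\mathbf{S}^{2}$ since $\xi_T\in L^{2}$ and $g\in\mathbf{H}^{2}$. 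Granting such $(H',\bar{H}')$, an induction shows $\mathbf{J}^{n}\le H'$ and $\bar{\mathbf{J}}^{n}\le\bar{H}'$: from $\bar{\mathbf{J}}^{n}\le\bar{H}'$ and $\tilde{\xi}^{g}\le H'-\bar{H}'$ we get $(\bar{\mathbf{J}}^{n}+\tilde{\xi}^{g})\mathbf{1}_{[0,T)}\le H'$, hence $\mathbf{J}^{n+1}=\mathcal{R}ef[(\bar{\mathbf{J}}^{n}+\tilde{\xi}^{g})\mathbf{1}_{[0,T)}]\le\mathcal{R}ef[H']=H'$, and symmetrically $\bar{\mathbf{J}}^{n+1}\le\bar{H}'$. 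Consequently $0\le\mathbf{J}^{n}\le H'$ and $0\le\bar{\mathbf{J}}^{n}\le\bar{H}'$ with $H',\bar{H}'\in\mathbf{S}^{2}$, so both nondecreasing sequences are bounded in $\mathbf{S}^{2}$ and their limits $\mathbf{J},\bar{\mathbf{J}}$ exist and lie in $\mathbf{S}^{2}$.

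Finally, $\mathbf{J}$ and $\bar{\mathbf{J}}$ are nonnegative strong supermartingales by Remark \ref{cv seq of p.s.s}. To pass to the limit in \eqref{coupled**} I check that $\mathcal{R}ef$ is continuous along nondecreasing sequences: if $\phi_{n}\uparrow\phi$, then $\ell:=\lim_{n}\mathcal{R}ef[\phi_{n}]$ is a strong supermartingale (Remark \ref{cv seq of p.s.s}) with $\ell\ge\phi$, whence $\ell\ge\mathcal{R}ef[\phi]$, while $\ell\le\mathcal{R}ef[\phi]$ is immediate from monotonicity, so $\mathcal{R}ef[\phi_{n}]\uparrow\mathcal{R}ef[\phi]$. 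Letting $n\to\infty$ in \eqref{coupled**} therefore gives the coupled system \eqref{coupled*}. For the minimality statement, the domination induction above applies verbatim to any pair of nonnegative strong supermartingales $(H,\bar{H})$ with $\tilde{\xi}^{g}\le H-\bar{H}\le\tilde{\zeta}^{g}$, giving $\mathbf{J}^{n}\le H$ and $\bar{\mathbf{J}}^{n}\le\bar{H}$ for all $n$, hence $\mathbf{J}\le H$ and $\bar{\mathbf{J}}\le\bar{H}$ in the limit. Since any solution $(H,\bar{H})$ of \eqref{coupled*} satisfies this bracketing (the property $\mathcal{R}ef[\eta]\ge\eta$ forces $H-\bar{H}\ge\tilde{\xi}^{g}$ and $\bar{H}-H\ge-\tilde{\zeta}^{g}$ on $[0,T)$, the terminal values being simultaneously null), this shows at once the asserted minimality property and that $(\mathbf{J},\bar{\mathbf{J}})$ is the smallest nonnegative strong supermartingale solution of \eqref{coupled*}, completing the proof.
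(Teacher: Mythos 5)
Your argument is correct and follows essentially the same route as the paper, which simply invokes Lemma \ref{incr opera} and Remark \ref{cv seq of p.s.s} and defers to the analogous Proposition 3.14 of Grigorova et al.\ (2018): monotone iteration via the nondecreasing operator $\mathcal{R}ef$, a uniform $\mathbf{S}^2$ bound obtained by transferring Mokobodzki's condition to the shifted barriers $(\tilde{\xi}^{g},\tilde{\zeta}^{g})$, and passage to the limit using the continuity of the Snell-envelope operator along nondecreasing sequences. Your writeup supplies the details that the paper leaves to the cited reference, including the correct reduction of the Mokobodzki pair and the minimality argument.
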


\begin{proof}
Using Lemma \ref{incr opera} and Remark \ref{cv seq of p.s.s}, the proof runs in much the same way as in \cite[Proposition 3.14]{Doubly Reflected BSDEs and E-Dynkin games}. 
\end{proof}

From Lemma \ref{lim} together with Proposition \ref{equivalent proposition}, we derive the following existence result.

\begin{theorem}(\textbf{Existence of a solution to the DRBSDE}) \label{existence}
Let $g=(g_t)\in \mathbf{H}^{2}$ be a driver process. Let $(\xi,\zeta)$ be a pair of ladlag admissible barriers satisfying Mokobodzki's condition. Then, there exists a solution of the doubly RBSDE associated with the driver $g$. The first component of this solution is given by 
\begin{equation}
Y_\delta := \mathbf{J}_\delta - \bar{\mathbf{J}}_\delta + E[ \xi_{\rho^T} + \int_\delta^{\rho^T} g_s ds  \vert \mathcal{F}_{\delta}] \,\,\,\, \text{a.s.} \,\,\,\, \text{for all}\,\,\, \delta \in \mathcal{S}_0.
\end{equation}
where $ \mathbf{J}$ and $\bar{\mathbf{J}}$ are the processes defined in Lemma \ref{lim}.
\end{theorem}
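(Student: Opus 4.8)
The plan is to read off the theorem as a direct composition of the two results that immediately precede it: Lemma \ref{lim}, which produces a pair of processes solving the coupled system (\ref{coupled*}) under Mokobodzki's condition, and Proposition \ref{equivalent proposition}, which reduces the solvability of the doubly reflected equation (\ref{eq*}) to exactly such a pair. Since no new construction is required beyond these two, the argument is essentially a two-step chaining, and the candidate first component is already dictated by formula (\ref{Y}).

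Concretely, first I would invoke Lemma \ref{lim}. Under Mokobodzki's condition the nondecreasing Picard iterates $(\mathbf{J}^{n})$ and $(\bar{\mathbf{J}}^{n})$ defined by (\ref{coupled**}) converge to limits $\mathbf{J}$ and $\bar{\mathbf{J}}$ that are nonnegative strong supermartingales in $\mathbf{S}^{2}$ and that satisfy
$$\mathbf{J}= \mathcal{R}ef[(\bar{\mathbf{J}} + \tilde{\xi}^{g} ) \textbf{1}_{[0,T)}], \qquad \bar{\mathbf{J}}= \mathcal{R}ef[(\mathbf{J} -\tilde{\zeta}^{g} ) \textbf{1}_{[0,T)}].$$
Thus the pair $(J,\bar{J}):=(\mathbf{J},\bar{\mathbf{J}})$ lies in $\mathbf{S}^{2}\times\mathbf{S}^{2}$ and verifies assertion (ii) of Proposition \ref{equivalent proposition}. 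Next I would apply the implication $(ii)\Rightarrow(i)$ of that proposition, which yields a solution $(Y,Z,M,A,B,A',B')$ of the doubly RBSDE with parameters $(g,\xi,\zeta)$ whose first component is given by (\ref{Y}) with $J=\mathbf{J}$ and $\bar{J}=\bar{\mathbf{J}}$; that is,
$$Y_\delta = \mathbf{J}_\delta - \bar{\mathbf{J}}_\delta + E\Big[ \xi_{\rho^T} + \int_\delta^{\rho^T} g_s\, ds \,\Big\vert\, \mathcal{F}_{\delta}\Big]\quad\text{a.s.\ for all }\delta\in\mathcal{S}_0,$$
which is precisely the stated formula. This completes the argument.

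I expect the only genuine subtlety to be the square-integrability of the limiting processes, and it is already absorbed into Lemma \ref{lim}. Indeed, passing to the limit in (\ref{coupled**}) is harmless for the strong supermartingale structure (monotone limits of strong supermartingales are strong supermartingales, cf. Remark \ref{cv seq of p.s.s}, and $\mathcal{R}ef$ is monotone by Lemma \ref{incr opera}), but to feed the limit into Proposition \ref{equivalent proposition} one needs $\mathbf{J},\bar{\mathbf{J}}\in\mathbf{S}^{2}$, not merely finite. This is exactly where Mokobodzki's condition is used: the bracketing supermartingales $H,\bar{H}\in\mathbf{S}^{2}$ with $\tilde{\xi}^{g}\le H-\bar{H}\le\tilde{\zeta}^{g}$ dominate $\mathbf{J},\bar{\mathbf{J}}$ via the minimality property of Lemma \ref{lim}, forcing $\mathbf{S}^{2}$-membership. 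At the level of the theorem itself there is therefore no further obstacle, and the proof reduces to the clean composition described above.
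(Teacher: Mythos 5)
Your proposal is correct and coincides with the paper's own argument: the theorem is stated there precisely as the composition of Lemma \ref{lim} (which supplies $\mathbf{J},\bar{\mathbf{J}}\in\mathbf{S}^{2}$ solving the coupled system (\ref{coupled*}) under Mokobodzki's condition) with the implication $(ii)\Rightarrow(i)$ of Proposition \ref{equivalent proposition}, yielding the first component via formula (\ref{Y}). Your added remark on where Mokobodzki's condition enters (the minimality property forcing $\mathbf{S}^{2}$-membership) is consistent with the paper and requires no further justification.
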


Let us now investigate the question of uniqueness of the solution to the doubly
RBSDE defined above. To this purpose, we state the following lemma.

\begin{lemma}(\textbf{A priori estimate}) \label{lemma uniqueness} Let $(Y,Z,M,A,B,A',B') \in \mathbf{S}^{2} \times \mathbf{H}^2 \times \mathbf{M}^{2,\perp} \times (\mathbf{S}^{2})^2 \times (\mathbf{S}^{2})^2$ (resp. $(\bar{Y}, \bar{Z}, \bar{M}, \bar{A}, \bar{B}, \bar{A}',\bar{B}') \in \mathbf{S}^{2} \times \mathbf{H}^2 \times \mathbf{M}^{2,\perp} \times (\mathbf{S}^{2})^2 \times (\mathbf{S}^{2})^2$) be a solution to the DRBSDE associated with driver $g=(g_t) \in \mathbf{H}^{2}$ (resp. $\bar{g}=(\bar{g}_t) \in \mathbf{H}^{2}$) and the ladlag admissible barriers $\xi$ and $\zeta$. Then, there exists $c>0$ such that for all $\epsilon \geq 0$, for all $\beta >\frac{1}{\epsilon^2}$, we have 
\begin{equation}\label{ll1}
\Vert Z- \bar{Z} \Vert^2_{\beta} + \Vert M- \bar{M} \Vert^2_{\mathbf{M}^2_\beta} \leq \epsilon^2 \Vert g- \bar{g} \Vert^2_{\beta}; \\
\end{equation}
\begin{equation} \label{ll11}
\vertiii{Y- \bar{Y}}^2_{\beta} \leq 4\epsilon^2(1 + 6c^2) \Vert g- \bar{g} \Vert^2_{\beta}.
\end{equation}
\end{lemma}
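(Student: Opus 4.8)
The plan is to mirror the proof of Lemma \ref{a prio}, applying the Gal'chouk--Lenglart change-of-variable formula (cf. Theorem \cite{Gal'chouk}) to the process $e^{\beta s}|\tilde{Y}_s|^2$, where I set $\tilde{Y} := Y-\bar{Y}$, $\tilde{Z} := Z-\bar{Z}$, $\tilde{M} := M-\bar{M}$, $\tilde{g}:=g-\bar{g}$, together with the differences $\tilde{A}:=A-\bar{A}$, $\tilde{A}':=A'-\bar{A}'$, $\tilde{B}:=B-\bar{B}$, $\tilde{B}':=B'-\bar{B}'$ of the reflection processes. First I would subtract the two copies of the forward equation \eqref{eq**} to see that $\tilde{Y}$ is a strong optional semimartingale whose finite-variation part is governed by $-\tilde{g}\,ds -d\tilde{A}+d\tilde{A}'-d\tilde{B}+d\tilde{B}'$, and whose terminal value vanishes, $\tilde{Y}_T=\xi_T-\xi_T=0$, since the barriers are common to both solutions. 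Integrating the Gal'chouk--Lenglart expansion of $e^{\beta s}|\tilde{Y}_s|^2$ from $t$ to $T$ and taking expectations (so that the genuine martingale increments $\tilde{Z}\,dW$ and $d\tilde{M}$ drop out) produces an identity in which the good quadratic terms $\beta\,E\!\int_t^T e^{\beta s}|\tilde{Y}_s|^2\,ds$, $E\!\int_t^T e^{\beta s}|\tilde{Z}_s|^2\,ds$ and $E\!\int_t^T e^{\beta s}\,d[\tilde{M}]_s$ sit on the left, against the driver cross term $2\,E\!\int_t^T e^{\beta s}\tilde{Y}_s\tilde{g}_s\,ds$ and the reflection term $2\,E\!\int_t^T e^{\beta s}\tilde{Y}_{s^-}\bigl(d\tilde{A}_s-d\tilde{A}'_s+d\tilde{B}_s-d\tilde{B}'_s\bigr)$ on the right.

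The main obstacle, and the only genuinely new point relative to Lemma \ref{a prio}, is controlling the sign of this last reflection term now that four increasing processes intervene. The key is that each piece is nonpositive thanks to the admissibility $\xi\le Y,\bar{Y}\le\zeta$ and the minimality conditions. For the lower-barrier part, the Skorohod conditions \eqref{sko1} and \eqref{sko2} force $dA$ (its continuous, predictable-jump and purely discontinuous parts) to charge only $\{Y_{s^-}=\xi_{s^-}\}$ or $\{Y_\delta=\xi_\delta\}$, so that $\int \tilde{Y}_{s^-}\,dA_s=\int(\xi_{s^-}-\bar{Y}_{s^-})\,dA_s\le 0$ because $\bar{Y}\ge\xi$, and symmetrically $-\int\tilde{Y}_{s^-}\,d\bar{A}_s=-\int(Y_{s^-}-\xi_{s^-})\,d\bar{A}_s\le 0$; the same argument applied at the upper barrier, where $dA',dB'$ charge $\{Y=\zeta\}$, shows $-\int\tilde{Y}_{s^-}\,d\tilde{A}'_s\le 0$ and $-\int\tilde{Y}_{s^-}\,d\tilde{B}'_s\le 0$. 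The mutual singularity \eqref{mutualy singular} guarantees that the lower- and upper-barrier measures never act simultaneously, so no spurious cross contributions survive and the whole reflection term can be discarded, strengthening the identity into an inequality.

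With the reflection term dropped, I would close the estimate by Young's inequality $2\tilde{Y}_s\tilde{g}_s\le\beta|\tilde{Y}_s|^2+\tfrac1\beta|\tilde{g}_s|^2$, which cancels the $\beta\,E\!\int e^{\beta s}|\tilde{Y}_s|^2$ term and leaves $\Vert\tilde{Z}\Vert_\beta^2+\Vert\tilde{M}\Vert_{\mathbf{M}^2_\beta}^2\le \tfrac1\beta\Vert\tilde{g}\Vert_\beta^2\le\epsilon^2\Vert\tilde{g}\Vert_\beta^2$ for $\beta>\tfrac1{\epsilon^2}$, which is \eqref{ll1}. For \eqref{ll11} I would represent $\tilde{Y}_\sigma$ through the conditional expectation of the remaining integrand, bound the total variation of the increasing processes by the sup of $|\tilde{Y}|$ and $\Vert\tilde{g}\Vert_\beta$ via the Dellacherie--Meyer energy estimate (this is where the constant $c$ enters), and apply Doob's $L^2$ inequality exactly as in Lemma \ref{a prio} to pass to the $\vertiii{\cdot}_\beta$-norm, yielding the constant $4\epsilon^2(1+6c^2)$. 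The bookkeeping of constants is routine; the decisive step remains the sign analysis of the reflection term through the Skorohod and singularity conditions.
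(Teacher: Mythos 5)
Your proposal is correct and follows essentially the same route as the paper, which does not write out the argument but simply defers to Lemma 3.18 of Grigorova et al.\ (2018), whose proof is exactly the Gal'chouk--Lenglart expansion of $e^{\beta s}|\tilde{Y}_s|^2$, the sign analysis of the four reflection measures via the Skorohod conditions and the two-sided barrier constraint, and Young's inequality that you describe. The only cosmetic remarks are that the mutual-singularity condition is not actually needed for the sign argument (each of the four reflection contributions is nonpositive on its own), and that the right-jump terms coming from $B,B'$ should be paired with $\tilde{Y}_s$ rather than $\tilde{Y}_{s^-}$ --- which is precisely how you then use the condition on $\lbrace Y_\delta=\xi_\delta\rbrace$, so the argument goes through.
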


\begin{proof} 
The proof of this result in the case of ${\cal E}^{f}$-Dynkin games over stopping times can be found
in \cite[Lemma 3.18]{Doubly Reflected BSDEs and E-Dynkin games}. The proof in our case is identical and is
therefore omitted.
\end{proof}

From this result, we derive the following uniqueness result for the DRBSDE associated with the driver process $g(t)$.

\begin{theorem}
Let $\xi$ and $\zeta$ be two ladlag admissible barriers satisfying Mokobodzki's condition. The DRBSDE (\ref{eq*}) associated with driver process $g(t)$ admits a unique solution $(Y,Z,M,A,B,A',B') \in \mathbf{S}^{2} \times \mathbf{H}^2 \times \mathbf{M}^{2,\perp} \times (\mathbf{S}^{2})^2 \times (\mathbf{S}^{2})^2$. 
\end{theorem}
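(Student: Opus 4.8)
The plan is to read off existence directly from Theorem \ref{existence}, whose hypotheses --- that $(\xi,\zeta)$ is a pair of ladlag admissible barriers satisfying Mokobodzki's condition --- are exactly those assumed here, and then to devote all the work to uniqueness. So suppose $(Y,Z,M,A,B,A',B')$ and $(\bar Y,\bar Z,\bar M,\bar A,\bar B,\bar A',\bar B')$ are two solutions of the DRBSDE \eqref{eq*} associated with the \emph{same} driver process $g$ and the same barriers $(\xi,\zeta)$.

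First I would pin down the first three components by means of the a priori estimate of Lemma \ref{lemma uniqueness} applied with $\bar g := g$. Then the right-hand sides of \eqref{ll1} and \eqref{ll11} vanish, so that $\vertiii{Y-\bar Y}^2_{\beta}=0$, $\Vert Z-\bar Z\Vert^2_{\beta}=0$ and $\Vert M-\bar M\Vert^2_{\mathbf{M}^2_\beta}=0$. Since these norms are equivalent to the standard ones, we obtain $Y=\bar Y$ up to indistinguishability (by the section theorem \ref{section}), $Z=\bar Z$ in $\mathbf{H}^2$ and $M=\bar M$ in $\mathbf{M}^2$. In particular the driver path $h_s:=g(s,Y_s,Z_s)$ is common to both solutions.

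Next I would identify the purely discontinuous parts. Writing \eqref{eq*} in forward form and invoking the mutual singularity $dB_t\perp dB'_t$ from \eqref{mutualy singular}, Remark \ref{re} furnishes the pointwise identities $\Delta B_\delta=(Y_{\delta^+}-Y_\delta)^-$ and $\Delta B'_\delta=(Y_{\delta^+}-Y_\delta)^+$ for every $\delta\in\mathcal{S}_0$, and likewise for $\bar B,\bar B'$. As $Y=\bar Y$, the jumps of $B,B'$ agree with those of $\bar B,\bar B'$; and since $B,B',\bar B,\bar B'$ are purely discontinuous with $B_{0^-}=B'_{0^-}=0$, each equals the sum of its jumps, whence $B=\bar B$ and $B'=\bar B'$. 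With $Y,Z,M,B,B'$ now common to both solutions, rearranging \eqref{eq*} to isolate the predictable finite-variation term shows that the process $A-A'$ is uniquely determined, i.e. $A-A'=\bar A-\bar A'$ (using $A_0=A'_0=0$).

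The last step, which I expect to be the genuine obstacle, is to disentangle $A$ from $A'$ out of their difference alone; here the mutual singularity $dA_t\perp dA'_t$ (and $d\bar A_t\perp d\bar A'_t$) is decisive. A predictable cadlag process of integrable total variation admits a \emph{unique} decomposition as a difference of two nondecreasing predictable processes whose associated measures are mutually singular (the probabilistic Jordan--Hahn decomposition, cf. Proposition A.7 in \cite{Generalized Dynkin Games and Doubly reflected BSDEs with jumps}). Applying this to $A-A'=\bar A-\bar A'$ forces $dA_t=d\bar A_t$ and $dA'_t=d\bar A'_t$, and with $A_0=A'_0=0$ this gives $A=\bar A$ and $A'=\bar A'$. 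All seven components thus coincide, establishing uniqueness. The main difficulty, as indicated, is precisely this separation of the two pairs of increasing processes from their differences: the a priori estimate settles $(Y,Z,M)$ in one stroke, but recovering $(A,B)$ and $(A',B')$ individually is possible only through the forward-jump identities of Remark \ref{re} together with the Skorokhod and mutual-singularity structure recorded in \eqref{sko2} and \eqref{mutualy singular}.
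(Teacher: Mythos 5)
Your proposal is correct and follows essentially the same route as the paper's own proof: existence from Theorem \ref{existence}, uniqueness of $(Y,Z,M)$ from Lemma \ref{lemma uniqueness} with $g=\bar g$, then $B,B'$ from the jump identities of Remark \ref{re}, and finally $A,A'$ from equation \eqref{eq*} together with the mutual singularity condition \eqref{mutualy singular}. You merely spell out in more detail (via the uniqueness of the mutually singular Jordan--Hahn-type decomposition) the last step that the paper states in compressed form.
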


\begin{proof}
We only need to show the uniqueness of the solution, Theorem \ref{existence} gives the existence. For this purpose, let $(Y,Z,M,A,B,A',B')$ be a solution of the DRBSDE associated with the driver process $g(t)$ and the obstacles $(\xi, \zeta)$. The previous estimates (\ref{ll1}) and (\ref{ll11}) (applied with $g=\bar{g}$), gives the uniqueness of $(Y, Z, M)$. Thus, by Remark \ref{re} $B$ and $B'$ are uniquely determined and by using (\ref{eq*}) and the mutually singularity condition (\ref{mutualy singular}), we get the uniqueness of $A$ and $A'$, which completes the proof. 
\end{proof}

\subsection{Existence and uniqueness for a general Lipschitz driver} \label{sub 4.2}
\hspace*{0.5 cm} In this section, we are given a Lipschitz driver $g$. We prove existence and uniqueness of the solution to the DRBSDE from Definition \ref{def2}, in the case of a general Lipschitz driver $g$. 

\begin{theorem}
Let $\xi$ and $\zeta$ be two ladlag admissible barriers satisfying Mokobodzki's condition and let $g$ be a Lipschitz driver. There exists a unique solution to the DRBSDE (\ref{eq*}) associated with parameters $(g,\xi,\zeta)$.
\end{theorem}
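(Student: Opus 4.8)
The plan is to reproduce the fixed-point scheme already used in the proof of Theorem \ref{uniqueness}, now driving it with the doubly reflected a priori estimate of Lemma \ref{lemma uniqueness} in place of Lemma \ref{a prio}. First I would introduce a map $\Phi$ on the Banach space $\mathbf{K}^2_\beta=\mathbf{S}^{2}\times\mathbf{H}^2$ defined as follows: for $(U,V)\in\mathbf{K}^2_\beta$, set $\Phi(U,V):=(Y,Z)$, where $(Y,Z,M,A,B,A',B')$ is the solution of the DRBSDE associated with the \emph{driver process} $g_t:=g(t,U_t,V_t)$ and the barriers $(\xi,\zeta)$. Since $g$ is Lipschitz and $(U,V)\in\mathbf{S}^{2}\times\mathbf{H}^2$, the process $g(\cdot,U_\cdot,V_\cdot)$ belongs to $\mathbf{H}^2$; hence, by Theorem \ref{existence} (existence under Mokobodzki's condition) together with the uniqueness theorem of Subsection \ref{sub 4.1}, the pair $(Y,Z)$ is well defined and $\Phi$ maps $\mathbf{K}^2_\beta$ into itself.

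Next I would show that $\Phi$ is a strict contraction for a suitable choice of parameters. Take $(U,V),(\bar U,\bar V)\in\mathbf{K}^2_\beta$, write $(Y,Z)=\Phi(U,V)$, $(\bar Y,\bar Z)=\Phi(\bar U,\bar V)$, and set $\tilde Y:=Y-\bar Y$, $\tilde Z:=Z-\bar Z$, $\tilde U:=U-\bar U$, $\tilde V:=V-\bar V$. Adding the estimate \eqref{ll11} and the $Z$-part of \eqref{ll1} of Lemma \ref{lemma uniqueness}, applied to the two driver processes $g(t,U_t,V_t)$ and $g(t,\bar U_t,\bar V_t)$, gives
\begin{equation*}
\vertiii{\tilde Y}^2_\beta + \Vert \tilde Z \Vert^2_\beta \le 6\epsilon^2(1+6c^2)\,\Vert g(t,U_t,V_t)-g(t,\bar U_t,\bar V_t)\Vert^2_\beta .
\end{equation*}
The Lipschitz property of $g$ bounds the right-hand side by $C\epsilon^2\big(\Vert\tilde U\Vert^2_\beta+\Vert\tilde V\Vert^2_\beta\big)$ for a constant $C$ depending only on $c$ and on the Lipschitz constant $K$, and Fubini's theorem yields $\Vert\tilde U\Vert^2_\beta\le T\,\vertiii{\tilde U}^2_\beta$, so that
\begin{equation*}
\vertiii{\tilde Y}^2_\beta + \Vert \tilde Z \Vert^2_\beta \le C\epsilon^2(1+T)\big(\vertiii{\tilde U}^2_\beta+\Vert\tilde V\Vert^2_\beta\big).
\end{equation*}

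Finally, choosing $\epsilon>0$ so small that $C\epsilon^2(1+T)<1$ and then $\beta\ge 1/\epsilon^2$, the map $\Phi$ is a strict contraction on $(\mathbf{K}^2_\beta,\Vert\cdot\Vert_{\mathbf{K}^2_\beta})$, and the Banach fixed-point theorem furnishes a unique fixed point $(Y,Z)$. This $(Y,Z)$ consists of the first two components of the solution of the DRBSDE driven by $h(\omega,t):=g(\omega,t,Y_t(\omega),Z_t(\omega))$ with barriers $(\xi,\zeta)$; the remaining components $(M,A,B,A',B')$ are read off from that solution, and uniqueness is inherited from uniqueness of the fixed point together with uniqueness of the solution for the driver process $h$. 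I expect the only genuine point to verify is that the contraction constant is independent of $(U,V)$, which is exactly the content of the $(U,V)$-uniform a priori bound of Lemma \ref{lemma uniqueness}; the main obstacle is therefore conceptual rather than computational, the whole argument paralleling the singly reflected case of Theorem \ref{uniqueness}, with Mokobodzki's condition ensuring that $\Phi$ is well defined at each iteration.
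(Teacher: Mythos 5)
Your proposal follows exactly the paper's argument: the same contraction mapping $\Phi$ on $\mathbf{K}^2_\beta$, well-definedness via Theorem \ref{existence} under Mokobodzki's condition, and the a priori estimates of Lemma \ref{lemma uniqueness} substituted for Lemma \ref{a prio} in the contraction computation, mirroring the proof of Theorem \ref{uniqueness}. Your write-up merely makes explicit the "analysis similar to that in the proof of Theorem \ref{uniqueness}" that the paper leaves to the reader, so it is correct and essentially identical to the paper's proof.
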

\begin{proof} 
For $\beta >0$, we introduce a mapping $\Phi$ from $\mathbf{K}^2_\beta$ into itself. This map is defined by: for a given $(U,V)\in \mathbf{K}^2_\beta$, $\Phi(U,V) := (Y,Z)$, where $Y$, $Z$ are the first two components of the solution $(Y,Z,M,A,B,A',B')$ to the DRBSDE associated with driver $g_t:= g(t,U_t,V_t)$ and with the pair of ladlag admissible barriers $(\xi,\zeta)$. Note that by Theorem \ref{existence}, the mapping $\Phi$ is well-defined. Using Lemma \ref{lemma uniqueness} and analysis similar to that in the proof of Theorem \ref{uniqueness}, we conclude that $\Phi$ is a contraction, and hence by the Banach fixed-point theorem, admits a unique fixed point $(Y,Z) \in \mathbf{K}_\beta^2$. By the definition of $\Phi$, the process $(Y,Z)$ is equal to the first two components of the solution $(Y,Z,M,A,B,A',B')$ to the DRBSDE associated with the driver process $h(\omega,t):= g(\omega,t,Y_t(\omega), Z_t(\omega))$ and with the pair of barriers $(\xi,\zeta)$. Thus, $(Y,Z,M,A,B,A',B')$ is the unique solution to the DRBSDE  with parameters $(g,\xi,\zeta)$, which completes the proof. 
\end{proof}

% \section{Data Availability Statement}

% \hspace*{0.5 cm} No data-sets were generated or analyzed during the current study.

% \section*{Acknowledgements}

% \hspace*{0.5 cm} The authors thank the anonymous Referee for his valuable comments and suggestions from which the manuscript greatly benefited.

\end{document}